\documentclass[12pt]{amsart}
\usepackage{graphicx}
\usepackage{amssymb}
\usepackage{epstopdf}
\usepackage{nicefrac}
\usepackage{enumerate}
\usepackage{float}
\usepackage{tikz-cd}
\usepackage{hyperref}
\usepackage{pst-node}
\usepackage{tikz-cd} 
\usetikzlibrary{arrows}

\DeclareGraphicsRule{.tif}{png}{.png}{`convert #1 `dirname #1`/`basename #1 .tif`.png}

\parskip = 6pt
\parindent = 0.0in

\hoffset=-.7in
\voffset=-.7in
\setlength{\textwidth}{6in}
\setlength{\textheight}{9.1in}
\setlength{\marginparwidth}{0.7in}

\usepackage[all,cmtip]{xy}

\newtheorem{thm}{thm}[section]
\newtheorem{theorem}[thm]{Theorem}

\newtheorem{question}[thm]{Question}
\newtheorem{cor}[thm]{Corollary}
\newtheorem{defn}[thm]{Definition}
\newtheorem{ex}[thm]{Example}

\newtheorem{lemma}[thm]{Lemma}

\newtheorem{prop}[thm]{Proposition}

% math notation shortcuts

 % replaced by command \proof
 % replaced by command \endproof - still used inside theorem environment

% we next have groupings of useful shortcuts as needed frequently

% primed letters

% overline letters

\newcommand{\e}{{\epsilon}}

% bolds

% the rest of these are shortcuts for standard text formats

% blackboard bolds

\newcommand{\mQ}{{\mathbb Q}}
\newcommand{\mR}{{\mathbb R}}

\newcommand{\mX}{{\mathbb X}}
\newcommand{\mY}{{\mathbb Y}}
\newcommand{\mZ}{{\mathbb Z}}

% cal script letters
\newcommand{\cA}{{\mathcal A}}
\newcommand{\cB}{{\mathcal B}}
\newcommand{\cC}{{\mathcal C}}

\newcommand{\cG}{{\mathcal G}}
\newcommand{\cF}{{\mathcal F}}
\newcommand{\cH}{{\mathcal H}}

\newcommand{\cM}{{\mathcal M}}
\newcommand{\cN}{{\mathcal N}}

\newcommand{\cP}{{\mathcal P}}

\newcommand{\cR}{{\mathcal R}}
\newcommand{\cS}{{\mathcal S}}
\newcommand{\cT}{{\mathcal T}}

\newcommand{\cW}{{\mathcal W}}

% fraktur letters

\newcommand{\h}{{\overline{H^+_1}}}
\newcommand{\p}{{\overline{\pi^+_1}}}
\newcommand{\eX}{{\underleftarrow\lim(X_i,f_i,p_i)}}
\newcommand{\eY}{{\underleftarrow\lim(Y_i,g_i,q_i)}}
\newcommand{\cc}{{\mathbf c}}
\newcommand{\s}{{\sim_T\,}}
\newcommand{\pps}{PPS}

\usepackage{tikz}

\input xy
\xyoption {all}

\thanks{2020 {\it Mathematics Subject Classification}. Primary 37C10, 55P55; Secondary 37B10, 37B52, 46L55\\
The second author thanks Durham University for the period of research leave during which much of this work was done.}
\title{A Complete Invariant for Flow Equivalence}

%\date{10 October, 2024}
\date{\today}

\begin{document}

\author{Alex Clark}
\address[Alex Clark]{Centre for Complex Systems, Queen Mary University of London, London, E1 4NS, UK}
\email{alex.clark@qmul.ac.uk}

\author{John Hunton}
\address[John Hunton]{Department of Mathematical Sciences, Durham University, Upper Mountjoy Campus, Stockton Road, Durham, DH1 3LE, UK}
\email{john.hunton@durham.ac.uk}

\maketitle

\begin{abstract} Minimal flow spaces of dimension 1 are among the most fundamental limit sets in dynamical systems.  These invariant sets occur as the typical minimal sets in surface flows, the minimal sets of suspensions of subshifts (for example, in Lorenz template models of the Lorenz attractor) and the hulls of repetitive tilings of dimension one. Here we establish a complete invariant for the flow equivalence of such objects. The invariant takes values in a category of `positive trope'  classes of inverse sequences of free groups and positive maps, or alternatively within a certain category of symbolic systems. Moreover, every such symbolic system is realised by a flow space, and we thus have a one to one correspondence between flow equivalence classes of minimal  flow spaces and positive trope classes of such systems. At the same time, this provides a complete invariant both for flow equivalence of minimal $\mZ$-Cantor dynamical systems and for germinal equivalence of minimal $\mZ$-Cantor  systems.  Our work thus greatly extends that of Barge and Diamond on their complete invariant of primitive substitution tilings, and provides counterpoint to the work of Giordano, Putnam and Skau on orbit equivalence of $\mZ$-Cantor systems. 
\end{abstract}

\addtocontents{toc}{\protect\thispagestyle{empty}}
\tableofcontents
\thispagestyle{empty}

\setcounter{page}{1}

\section{Introduction}

In this work we  consider compact, metrizable spaces $\mX$ of dimension one admitting a continuous action of $\mR$ without fixed points. For brevity, we refer to such an action as a \emph{flow} and to such a space $\mX$ as a \emph{flow space.}  The path components, or \emph{leaves} of a flow space coincide with the orbits of the flow. For a connected flow space, a homeomorphism will either preserve or reverse the orientation of each flow orbit. A homeomorphism that preserves the orientations of orbits is known as a \emph{flow equivalence}. A flow space is \emph{minimal} if every orbit is dense in the whole space; such a space is necessarily connected. A flow space is called \emph{aperiodic} if none of the orbits are periodic, i.e., no path component is homeomorphic to  the circle $S^1$. 

Flow spaces occur as the non-singular minimal sets of general flows on surfaces of genus 2 or more and are ubiquitous in genus 1. In particular, flow spaces occur as the continuous extensions of interval exchange maps.

We present here a complete invariant of minimal flow spaces up to flow equivalence. Along the way we see also a complete invariant of \emph{pointed} minimal flow spaces, that is, of flow spaces $(\mX,x_\bullet)$ with distinguished point $x_\bullet$, where the flow equivalence is required to take distinguished point to distinguished point.

By the results of \cite{KS} there is a direct link between flow spaces and homeomorphisms of zero--dimensional spaces. In particular, any flow space is flow equivalent to the suspension of a homeomorphism of a zero--dimensional space, which will be a Cantor set in the case of a minimal, aperiodic flow space. The key notion that plays the role of flow equivalence for Cantor homeomorphisms is that of  \emph{germinal equivalence}, Definition \ref{germinal}, and  our results  provide a complete invariant for  germinal equivalence of minimal Cantor set homeomorphisms.

We note two earlier sets of results, providing more context for this work. The family of minimal flow spaces includes (but is certainly not limited to) the family of one dimensional, aperiodic tiling spaces, \cite{BG}. In turn this includes the special, and much studied, class of  tilings defined by primitive, non-periodic substitutions. A complete invariant up to flow equivalence for this last class has been provided by Barge and Diamond \cite{BD}. While there are some parallels with our work, the techniques needed for their main result rely heavily on the substitution nature of such spaces, and is not generalisable even to a wider class of tiling spaces. The difficulties arise (both for \cite{BD} and in our work)  from moving from a pointed invariant to an invariant of a general flow equivalence, and we must utilise a significantly different approach.

Secondly, we note the work of Giordano, Putnam and Skau \cite{GPS} and others (especially \cite{HPS, DHS}) who use $C^*$-algebra  $K$-theory to provide a complete invariant of minimal homeomorphisms of the Cantor set $X$ up to \emph{orbit equivalence}. Recall that two homeomorphisms $f\colon X_1\to X_1$ and $g\colon X_2\to X_2$ of the Cantor set are orbit equivalent if there is a homeomorphism $h\colon X_1\to X_2$ with 
$$h\left(\mbox{orbit}_f(x)\right)\ =\ \left(\mbox{orbit}_gh(x)\right)\qquad\mbox{ for all $x\in X_1$.}$$

As noted above, the action of suspending a homeomorphism of a Cantor set gives a flow space, but the notions of orbit equivalence and flow equivalence are not  the same, as demonstrated by Example \ref{twistedfib} and noted earlier by others, e.g., in \cite{BD}. A key issue is that a flow equivalence on a suspension must preserve the order of points in the Cantor set through which the flow line passes, whereas in an orbit equivalence the order may be jumbled. In terms of the invariants for such notions, the $K$-theory is (perhaps ironically given the nomenclature sometimes applied) essentially an \emph{abelian} invariant: in the case of its application to a substitution tiling, for example, it becomes a function of the matrix of the substitution (a homological object) \cite{DHS}. In contrast, to obtain our flow invariant, critical use is made of the non-commutativity of the homotopy tools we apply, in particular that of the fundamental group.

\begin{center}
$*$
\end{center}

\medskip
The invariants  we construct depend crucially on finding appropriate presentations of the underlying flow spaces. We do this by modelling the flow spaces as limits of certain very particular types of inverse sequences, objects which we term \emph{expansions}. The building blocks of these presentations are maps from the flow space to one point unions of finitely many circles that are oriented so as to respect the flow direction; these play a role that is similar to characters in Pontryagin duality. We introduce these and their properties in Section \ref{Sectexpan}, where we show that any minimal flow space has an expansion; we also show that such an expansion can be encoded via a symbolic sequence. Moreover, we prove in Theorems \ref{symbolicexpansion} and \ref{minimal} that \emph{any} symbolic sequence that satisfies two simple combinatorial conditions represents a minimal, aperiodic flow space. There is a loose analogue of this equivalence in the case of the dynamics of a homeomorphism on a Cantor set where the role of combinatorial object is played by the Bratteli diagram, as established in \cite{HPS}.

While it follows immediately that appropriately defined equivalences between expansions give rise to flow equivalences between the flow spaces they represent, the converse is more subtle. In Section \ref{SectRigidity} we show that any flow equivalence between flow spaces induces an equivalence of any pair of flow expansions used to model those spaces. In turn, and in the spirit of certain rigidity theorems for abelian topological groups,  substitution tiling spaces and  tiling spaces more generally, this leads to a \emph{rigidity theorem}, Theorem \ref{isotopic}, for flow spaces; this may be of independent interest.

A further corollary of this part of our work makes precise the relation between flow equivalence and the notion of germinal equivalence mentioned above. Specifically, in Theorem \ref{germ}  we prove that two minimal homeomorphisms of the Cantor set are germinally equivalent if and only if their suspension spaces are flow equivalent.

It is certainly not the case that every aperiodic flow space $\mX$ is the space of an aperiodic tiling, but the tiling spaces are `dense' in the set of all flow spaces in the following sense. Given any expansion of a flow space $\mX$ we obtain a sequence of tiling spaces $\mathcal{T}_n$ with factor maps $\mathcal{T}_m\to \mathcal{T}_n$ for all $m>n$, and a homeomorphism $\mX=\lim\mathcal{T}_n$. Under specified conditions, this homeomorphism is a conjugacy. Specifically, for a large class of minimal flow spaces, including  examples that are not topologically weakly mixing,  the flow on the flow space is the inverse limit of the translation flows on the corresponding tiling spaces. %If $\mX$ is itself a tiling space, then the factor maps $\mathcal{T}_n\to \mathcal{T}_m$ and quotients $\mX\to\mathcal{T}_n$ are flow equivalences for sufficiently large $n$. 

A flow space $\mX$ with a given expansion $\mX=\lim X_n$ also defines a distinguished point $x_\bullet\in \mX$, namely the unique point that maps to the wedge points of all the spaces $X_n$. It is useful to record this as a \emph{pointed space} $(\mX,x_\bullet)$. We refer to such a situation by saying that $\{X_n\}$ is \emph{an expansion of $\mX$ about the point $x_\bullet$.} Not only does every minimal flow space have an expansion, but given \emph{any} point in $\mX$ then there are expansions of $\mX$ about that point. (For non-minimal flow spaces there may be expansions about some points, but not about others, an issue explored in more detail in Section \ref{Sectexpan}.) A \emph{pointed flow equivalence} $(\mX,x_\bullet)\to(\mY,y_\bullet)$ is then a flow equivalence $f\colon \mX\to\mY$ that also satisfies $f(x_\bullet)=y_\bullet$.

Our first  complete invariant, that of pointed flow equivalence, is  established in Section \ref{pointedsection}. It is  a functor defined on any pointed flow space $(\mX,x_\bullet)$ which has an expansion about the point $x_\bullet$. It takes values in the category of inverse systems of finitely generated free groups with distinguished generators (\emph{positive generators}) and bonding maps taking the positive words to positive words. For a flow space $(\mX,x_\bullet)$, its image, $\overline{\pi_1^+}(\mX,x_\bullet)$, is given via any  expansion by  taking fundamental groups of the individual terms in the expansion. There is a natural notion of equivalence of such algebraic systems and we show in Theorem \ref{mainpointedtheorem} that if $(\mX,x_\bullet)$  and $(\mY,y_\bullet)$  have expansions about their respective basepoints, then there is a pointed flow equivalence $f\colon(\mX,x_\bullet)\to(\mY,y_\bullet)$ if and only if $\overline{\pi_1^+}(\mX,x_\bullet)$ and $\overline{\pi_1^+}(\mY,y_\bullet)$ are equivalent.

Standard techniques can readily show that if there is a pointed  flow equivalence $f\colon(\mX,x_\bullet)\to(\mY,y_\bullet)$ then there is one $f'\colon(\mX,x'_\bullet)\to(\mY,y'_\bullet)$ for any $x_\bullet', y_\bullet'$ respectively lying on the same leaves as $x_\bullet, y_\bullet$. However, it is not the case that failure of there being a \emph{pointed} flow equivalence $(\mX,x_\bullet)\to(\mY,y_\bullet)$ means that the underlying flow spaces $\mX$ and $\mY$ are not flow equivalent. For example, even in the simple case of a primitive substitution tiling space, it is shown in \cite{BD} that there are a finite number of distinguished leaves (the \emph{asymptotic composants}), and any homeomorphism $\mX\to \mY$ must map the distinguished leaves in $\mX$ to those of $\mY$. Thus the invariant $\overline{\pi_1^+}(\mX,x_\bullet)$ cannot immediately give us a complete invariant of general flow equivalence as it will distinguish  $(\mX,x_\bullet)$ and $(\mX,x'_\bullet)$ merely when $x_\bullet$ and $x_\bullet$ lie on leaves that cannot be mapped to each other.

To obtain the invariant in the unpointed category, more care has to be taken and a construction using conjugacy classes of homomorphisms inspired by the work of Fox \cite{F1,F2,F3} yields an invariant that does not depend on the base point.  In short summary, presented in detail in Section \ref{mainunpointedtheorem}, the method runs as follows. On the one hand we have flow spaces $\mX$, which admit expansions, together with flow equivalences; an expansion of a flow space naturally defines a basepoint $x_\bullet\in\mX$. On the other we have algebraic, or symbolic systems, denoted in Section \ref{mainunpointedtheorem} as \emph{\pps}'s, in which the functor $\overline{\pi_1^+}$ takes values. This target category comes with two notions of equivalence. One of these is effectively that of pro-equivalence in any pro-category; the other is that of  \emph{positive trope} equivalence, following the nomenclature of Fox. It is convenient to define, for a \pps\ $G_\bullet$, its \emph{positive trope class} $PT(G_\bullet)$ as its positive trope equivalence class, and correspondingly we write $PT(\mX)$ for the positive trope class $PT(\overline{\pi_1^+}(\mX,x_\bullet))$ which we prove is independent of the choice of basepoint $x_\bullet$. This $PT$ then becomes our complete invariant for flow equivalences. Our main results may thus be summarised as follows.

\begin{theorem}
\begin{enumerate}
\item Minimal, aperiodic flow spaces $\mX$ and $\mY$ are  flow equivalent if and only $PT(\mX)=PT(\mY)$, i.e., if the \pps's $\overline{\pi_1^+}(\mX,x_\bullet)$ and $\overline{\pi_1^+}(\mY,y_\bullet)$ are positive trope equivalent for any choice of $x_\bullet$ and $y_\bullet$, Corollary \ref{PTmain}.
\item Flow spaces $(\mX,x_\bullet)$ and $(\mY,y_\bullet)$ are pointed flow equivalent if and only if the \pps's $\overline{\pi_1^+}(\mX,x_\bullet)$ and $\overline{\pi_1^+}(\mY,y_\bullet)$ are pro-equivalent, Theorem \ref{mainpointedtheorem}.
\item Any \pps\ is realised via $\overline{\pi_1^+}$, up to pro-equivalence, by a minimal, aperiodic flow space, Theorem \ref{symbolicexpansion} and Theorem \ref{minimal}.
\item Hence there is a one to one correspondence between minimal, aperiodic flow spaces up to flow equivalence and \pps's up to positive trope equivalence.
\end{enumerate}
\end{theorem}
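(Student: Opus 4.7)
The plan is to organize this theorem as a structural synthesis of its four parts: part (2) provides the foundational pointed invariant, part (3) supplies realizability, part (1) upgrades (2) to the base-point-free setting via a quotient construction, and part (4) then follows formally. The engine throughout is the functor $\overline{\pi_1^+}$ on expansions, valued in inverse sequences of finitely generated free groups with distinguished positive generators.

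For part (2), the forward direction is essentially functorial: given a pointed flow equivalence $f\colon(\mX,x_\bullet)\to(\mY,y_\bullet)$, the rigidity result (Theorem \ref{isotopic}) of Section \ref{SectRigidity} shows that $f$ induces an equivalence between any pair of flow expansions $\{X_n\}$, $\{Y_n\}$ modelling $\mX$ and $\mY$. Applying $\pi_1$ term by term yields a pro-equivalence of the inverse sequences, and because the expansions are built from orientation-respecting maps to wedges of oriented circles, the ladder carries positive generators to positive words, giving a \pps-equivalence. Conversely, an algebraic pro-equivalence is lifted to a geometric ladder of maps between finite wedges of oriented circles using the presentation results of Section \ref{Sectexpan}, and inverse limits produce the required pointed flow equivalence. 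For part (3), any \pps\ $G_\bullet$ is encoded as a symbolic sequence satisfying the two combinatorial conditions of Section \ref{Sectexpan}, and Theorems \ref{symbolicexpansion} and \ref{minimal} together realize $G_\bullet$, up to pro-equivalence, as $\overline{\pi_1^+}(\mX,x_\bullet)$ for some minimal, aperiodic flow space.

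The principal obstacle is part (1). Given an unpointed flow equivalence $f\colon\mX\to\mY$, one typically has $f(x_\bullet)=y_\bullet'\neq y_\bullet$, so part (2) only delivers pro-equivalence of $\overline{\pi_1^+}(\mX,x_\bullet)$ with $\overline{\pi_1^+}(\mY,y_\bullet')$. The substantive task is to compare $\overline{\pi_1^+}(\mY,y_\bullet')$ and $\overline{\pi_1^+}(\mY,y_\bullet)$ when the two base points need not lie on the same leaf. This is precisely the role of the positive trope quotient, constructed in the spirit of Fox using conjugacy classes of homomorphisms: one must verify that $PT(\mY)$ is genuinely independent of base point, not merely invariant under sliding along a single leaf (which is essentially formal by standard change-of-basepoint arguments), but also under moving between inequivalent leaves. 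Since a minimal flow space may have a finite number of distinguished leaves such as asymptotic composants, this invariance is nontrivial and is where the non-abelian nature of $\pi_1$ is essential, in contrast to the abelianised $K$-theoretic invariants that record only orbit equivalence. Once this base-point invariance is established, the forward direction of (1) is immediate, and the converse is a lift-and-limit argument: a positive trope equivalence is represented by algebraic pro-equivalences at suitably chosen base points, which by (2) lift to pointed flow equivalences, and forgetting base points gives the unpointed flow equivalence. Part (4) is then a formal consequence: by (1) the assignment $[\mX]\mapsto PT(\mX)$ is well-defined and injective, and by (3) it is surjective.
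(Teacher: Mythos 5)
Your organisation of the theorem into its four constituent results matches the paper, and your treatments of parts (2), (3) and (4), as well as of the forward direction of part (1), track the paper's arguments closely: part (2) is Lemma \ref{Pi} plus the lifting of the algebraic zigzag back to $\overline{\cW}$ via Lemma \ref{seq rep equiv}; part (3) is Theorems \ref{symbolicexpansion} and \ref{minimal}; and the forward direction of (1) is Theorem \ref{unpointed invariance}, where the real work is exactly where you locate it, namely handling non-basepoint-preserving combinatorial representations (Theorem \ref{unpointed}) and choosing the connecting paths so that the induced homomorphisms remain positive and the triangles commute up to conjugation.

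However, there is a genuine gap in your converse direction of part (1). You propose that a positive trope equivalence between $\overline{\pi_1^+}(\mX,x_\bullet)$ and $\overline{\pi_1^+}(\mY,y_\bullet)$ ``is represented by algebraic pro-equivalences at suitably chosen base points,'' so that part (2) can be invoked and the base points then forgotten. This step is unjustified and is essentially the hard content of the theorem. A czz diagram only commutes up to level-by-level conjugating elements $g_i\in G_{m_i}$, $h_j\in H_{n_j}$, and there is no a priori reason these cohere into a consistent change of base point: one would need to show that the conjugated sequences arise as $\overline{\pi_1^+}$ of expansions of the \emph{same} spaces about new points and that after this adjustment the diagram commutes strictly. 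Identifying which positive-trope-related expansions correspond to which base points is precisely the open issue the paper flags (the Question following Theorem \ref{differentbasepoints}), and Theorem \ref{differentbasepoints} itself is \emph{deduced from} the completeness theorem rather than used to prove it. The paper's actual proof of the converse (Theorem \ref{ThmA}) avoids any reduction to the pointed case: it passes to the tiling space factors $\cT(p_i)$ of Theorem \ref{tilingfactor}, builds maps $\widetilde{d}_i$ from the positive homomorphisms $d_i$, uses Lemma \ref{decompose} together with Lemma \ref{plusorminus} to show that the conjugations act on the symbolic level as shifts (so the images really are the spaces $\cT(q_i)$), and then compensates these shifts by time-$t$ maps of the flow to assemble a commutative ladder whose limit is a flow equivalence. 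Your outline omits this mechanism entirely, and without it the converse of (1) does not follow from (2).
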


In Section \ref{Applications} we explore some initial applications of our results. In particular, while this last theorem gives us a complete invariant of flow spaces, some simplifications can make it easier to apply, at the cost of  losing its complete nature. One such simplification we make is to exchange the use of the fundamental group with homology, or even cohomology. At the same time, the tiling spaces and subshifts naturally associated to the expansions of flow spaces can be quite tractable, as we see in some specific examples, and so it is to be expected that the $PT$ invariant can be used to classify large families of flow spaces with the appropriate tools from combinatorial group theory.

\section{Background}\label{back}

We collect here briefly some basic background objects and constructions before starting the main work of the paper in the next section.

%\medskip\noindent{\bf Flow spaces and suspensions.}
\subsection{Flow spaces and suspensions.}
By a \emph{flow} on a space $\mX$ we mean a continuous action of $\mR$ on $\mX$, $\phi:\mR \times \mX \to \mX$. The action of $t\in \mR$ on $x \in \mX$ will be denoted $t.\,x,$ and similarly for $S\subset  \mR$ and $A\subset \mX$, we write $S.A=\{\, t.\, x\colon\, t\in S\text{ and }x\in A\}.$ 

A fundamental result for flow spaces of dimension one as considered here is in \cite{KS}, later generalised to the non--compact setting in \cite{AM}. They show 
\begin{theorem}
Any flow space of dimension one $\mX$ is flow equivalent to the suspension of a homeomorphism $h\colon Z \to Z$ of a zero--dimensional compact space $Z.$ 
\end{theorem}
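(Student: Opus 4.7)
The plan is to construct a compact zero--dimensional cross-section $Z\subset\mX$ that meets every orbit, and to take $h\colon Z\to Z$ to be the first-return map of the flow. The suspension of $h$ is then flow equivalent to $\mX$ via the evident map $(z,t)\mapsto t.\,z$, so the content of the theorem lies entirely in producing a genuine global transversal.

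First I would produce local flow boxes. Given any $x\in\mX$ which is not on a periodic orbit, the assumption that $\phi$ has no fixed points together with continuity of the action gives an open neighbourhood $U$ of $x$ and $\epsilon>0$ for which the arc $(-3\epsilon,3\epsilon).\,x$ is embedded. One seeks a local slice $N\ni x$ with $N\subset U$ such that the map $(-\epsilon,\epsilon)\times N\to\mX$, $(t,y)\mapsto t.\,y$, is a homeomorphism onto an open set. In a general compact metric space one cannot simply pick a transverse submanifold, but since $\dim\mX=1$ and the orbit through $x$ is a one-dimensional connected subset, Menger--Urysohn dimension theory lets one cut across the flow by a closed zero-dimensional set $N$: local neighbourhoods of $x$ decompose into flow arcs whose parameter space is zero-dimensional. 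Periodic orbits (which are circles) are dealt with separately by choosing a single point on each as its own slice.

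Next I would use compactness of $\mX$ to obtain a finite cover by flow boxes $(-\epsilon_i,\epsilon_i).\,N_i$, $i=1,\dots,k$. The task is to glue the $N_i$ into one global cross-section. Following the approach of Aarts--Martens, one perturbs each slice along the flow so that no two slices meet the same orbit simultaneously, refines the cover, and takes $Z$ to be an appropriate union of the resulting translates. With care, $Z$ is compact, zero-dimensional, meets every orbit, and the return function $\tau\colon Z\to(0,\infty)$ is continuous and bounded both above and away from zero. The first-return map $h\colon Z\to Z$, sending $z$ to $\tau(z).\,z$, is then a homeomorphism, and the quotient $(Z\times\mR)/\!\sim$ by $(z,t+\tau(z))\sim(h(z),t)$ maps flow-equivalently onto $\mX$.

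The main obstacle is precisely this gluing: ensuring that the union of the locally chosen zero-dimensional slices remains zero-dimensional (so that one does not accidentally accumulate a nondegenerate arc of transversal), and that the return time is continuous globally and not merely on each piece. This is the technical heart of \cite{KS} and its extension \cite{AM}; the formal passage from $(Z,h)$ to a suspension flow equivalent to $\mX$ is then routine.
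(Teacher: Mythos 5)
You should first be aware that the paper does not prove this statement at all: it is quoted verbatim as a known result of Keynes--Sears \cite{KS}, with the non-compact generalisation attributed to Aarts--Martens \cite{AM}. So there is no internal proof to compare against; the relevant comparison is with the arguments of those references, and your outline is indeed the strategy they follow (local flow boxes with zero-dimensional slices, obtained because a flow box $S\times(-\epsilon,\epsilon)$ sitting inside a one-dimensional space forces $\dim S=0$; then assembly of the local slices into a global closed cross-section; then the first-return map and the suspension).

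As a proof, however, your proposal has a genuine gap, and you have located it yourself: the entire mathematical content of the theorem is the existence of a \emph{global} closed zero-dimensional section meeting every orbit, with return time continuous and bounded above and away from zero, and your text asserts that this can be done ``with care'' while deferring the construction to \cite{KS} and \cite{AM}. Two points in particular are not routine. First, the union of finitely many perturbed local slices need not be closed in a way that keeps the return time continuous; orbits can enter and leave the union of flow boxes in a combinatorially complicated way, and the standard fix (building the global section from carefully chosen sub-slices so that distinct pieces have disjoint saturations over a fixed time interval) is exactly the technical heart you are omitting. Second, your treatment of periodic orbits --- ``choosing a single point on each as its own slice'' --- would fail if carried out orbit by orbit: a one-dimensional flow space can contain infinitely many periodic orbits accumulating on one another, and an arbitrary choice of one point per orbit is neither closed nor compatible with the sections chosen on nearby non-periodic orbits. (One does at least know the periods are bounded below, since a sequence of periodic points with periods tending to zero would converge to a fixed point, which is excluded; but the selection still has to be made continuously, not independently.) So the proposal is a correct and well-oriented outline of the argument of \cite{KS}/\cite{AM}, but it is an outline: the step it labels as ``the main obstacle'' is the theorem.
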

Recall that the \emph{suspension} of a homeomorphism $h\colon Z \to Z$ is the quotient $Z\times [0,1]/\!\!\sim\,$, where $(z,1)\sim (h(z),0)$, and the \emph{suspension flow} is induced by translation in the second factor.  We emphasise that this flow equivalence of the theorem may not be a conjugacy: while the homeomorphism $f\colon \mX\to Z\times [0,1]/\!\!\sim$ can be made to preserve the direction of the flows, it is not necessarily the case that it can be made to satisfy the relation $f(t.\,x)=t.\,f(x)$. 

We also consider a variation on the suspension as above that does not change the topology of the suspension space but can change the nature of the flow.  
The suspension of the homeomorphism $h\colon Z\to Z$ under the \emph{ceiling function} $c\colon Z \to \mR^+$  for a continuous $c$ is the space 
\[
\left\{\,(z,t) \colon 0\leq t \leq c(z)\,,\,z\in Z\right\}/\!\sim\,,
\]
where $(z,c(z))\sim (h(z),0)$ and the flow is again induced by translation.

\subsection{Shifts and tiling spaces.}Important for our investigations will be flows constructed from subshifts of the shift on $n$ symbols $\left( \{a_1,\dots,a_{n} \}^\mZ, s  \right)$. A \emph{tiling space} and its flow is constructed by taking the suspension of a subshift  under a ceiling function which is constant on the cylinder sets $\{\,(x_n)_{n\in \mZ} \colon x_0=a_i\,\}$; see, for example, \cite{SW}. Each point $x$ in such a tiling space is naturally associated to a tiling of $\mR$, identified with the orbit of $x$ and tiled by the intervals above each of the cylinder sets from the construction of the suspension. 

\subsection{Matchbox manifolds and flow spaces.}For our investigations, a \emph{matchbox manifold} is a compact space $X$ for which every point $x\in X$ has an open neighbourhood homeomorphic to $Z_x \times (-1,1)$ for some zero--dimensional space $Z_x$. The subsets of matchbox neighbourhoods of $X$ corresponding to sets of the form $\{z\}\times (-1,1)$ are called  \emph{matches}. 

Flow spaces are examples of matchbox manifolds that are \emph{orientable} in the sense that there are parameterisations (by $\mR$) of the path components which are consistent with a complete atlas of matchbox neighbourhoods  $Z \times (-1,1)$ and the orientation of the matches induced by the projection on the $(-1,1)$ factor. Alternatively, a matchbox manifold is orientable if there is an atlas of matchbox neighbourhoods in which there are continuous choices of orientations on the matches in each neighbourhood, and the orientations agree on the intersections of matchbox neighbourhoods in the atlas, \cite{F}. 

It is shown in \cite{AHO} that every orientable matchbox manifold is a flow space. A matchbox manifold that is not orientable admits a canonical double cover which is an orientable matchbox manifold \cite{F}. Classic examples of unorientable matchbox manifolds are the Plykin attractors in the plane or sphere whose double covers occur as DA attractors in the torus. With the aid of this double covering, our constructions can be easily adapted to invariants of unorientable matchbox manifolds as well.

\subsection{Inverse sequences and  inverse systems.} The invariants we construct take the form of equivalence classes of inverse sequences.  Recall that for a given category $\cC$, an \emph{inverse sequence} is a sequence of objects $C_i$ together with morphisms (bonding maps) $f_i \colon C_{i+1} \to C_i$, which we will denote $(C_i, f_i)$. A composition of bonding maps $f_m\circ \cdots \circ f_n$ will be denoted $f_{n,m}.$

Given two inverse sequences $(C_i,f_i)$ and $(D_i,g_i)$ in $\cC,$  we say that they are  \emph{isomorphic}  if  there is  a commutative diagram  in $\cC$:

\[
\begin{tikzcd}[font=\large] 
C_{m_1}  \arrow[swap]{d}{d_1} && C_{m_2}\arrow[swap]{d}{d_2}\arrow{ll}{f_{m_1,\,m_2}}&&C_{m_3}\arrow{ll}{f_{m_2,\,m_3}}\arrow[swap]{d}{d_3} &&  \cdots \arrow{ll}  \\ D_{n_1} && D_{n_2} \arrow{ll}{g_{n_1,\,n_2}}\arrow{llu}{u_1} &&D_{n_3} \arrow{ll}\arrow{ll}{g_{n_2,\,n_3}}\arrow{llu}{u_2}  &&\cdots \arrow{ll}   
\end{tikzcd}
\]
Such a diagram will be referred to as an \emph{isomorphism}. 

Equivalently, two inverse sequences $(C_i,f_i)$ and $(D_i,g_i)$ in $\cC$ are isomorphic if there is an inverse sequence in $\cC$ which can be truncated and telescoped to form both a subsequence of $(C_i,f_i)$ and a subsequence of $(D_i,g_i).$ For example, given the above isomorphism, such an interwoven sequence would be given by a diagram of the form 

\[
\begin{tikzcd}[font=\large] 
D_{n_1}  \arrow[swap] && C_{m_1}\arrow{ll}{d_1}&&D_{n_2}\arrow{ll}{u_1} && C_{m_2}\arrow{ll}{d_2}&&  \cdots \arrow{ll} \,.
\end{tikzcd}
\]
This latter approach is easily adapted to more general \emph{inverse systems} and is taken by Fox \cite{F1}. The work of this article could readily be written in such more general terms, but there are presentational and conceptual conveniences to remaining with inverse sequences. 

A \emph{morphism} of inverse sequences is defined similarly by a ladder diagram with vertical morphisms going in just one direction. 

We denote the category of inverse sequences in $\cC$ by $\overline{\cC}.$ Technically speaking, what we are referring to as an isomorphism in $\overline{\cC}$ is not an isomorphism of objects in $\overline{\cC}$ but rather in the category pro-$\cC$, a category whose objects are inverse systems and whose morphisms are equivalence classes of morphisms in the category of inverse systems. For example, a commutative diagram as above represents two morphisms whose compositions are \emph{not} the identity in $\overline{\cC}$, but in pro-$\cC$ this composition is equivalent to the identity. However, the existence of what we are referring to as an isomorphism establishes an equivalence relation on $\overline{\cC}$, and so the abuse of terminology is mild, and we spare the reader the details of formulating the  pro-$\cC$ category as the details are not relevant to our investigation.

\subsection{Inverse sequences and inverse limits.} We emphasise that a key feature of our invariants is that they occur on the level of  inverse sequences and not  their limits. While the limits (when they exist) of two isomorphic inverse sequences are isomorphic as objects in $\cC$, it is easy to find sequences which are not isomorphic but which have isomorphic limits. Passing to limits thus severely degrades the information contained  in the inverse sequence. For example,  consider the two sequences $C=(C_i,f_i)$ and $D=(D_i,g_i)$ in the category of groups, where $C_i$ is the trivial group for all $i$ and $D_i$ is $\mZ$ with $g_i(z)=2z$ for all $i$. While these two inverse sequences are not isomorphic (it is impossible to factor the bonding maps in $D$ through any of the $C_i$), they both have the trivial group as limit.

While our invariants are inverse sequences and not their limits, we will need to consider the limits of sequences of topological spaces, and for ease of notation we make use of a specific representation of this limit. Given a sequence of compact metric spaces $X_i$, $i\in \mZ^+$ and continuous maps $f_i\colon X_{i+1} \to X_i$, the \emph{inverse limit} of the sequence $X_1 \xleftarrow{f_1} X_2 \xleftarrow{f_2} X_3 \xleftarrow{f_3} \cdots$ is the subspace of the product space,
\[
\underleftarrow{\lim}(X_i,f_i)=\left\{ \,(x_i)_{i\in \mZ^+} \in \prod_{i\in \mZ^+}X_i \,: \text{ for all }i\in \mZ^+, \;x_i=f_i(x_{i+1}) \, \right\}.
\]
This is topologised with the subspace topology of the product topology on $\prod_{i\in \mZ^+}X_i $, but if  $X_i$ has the metric $d_i,$ we give the product $\prod_{i\in \mZ^+} X_i $ the explicit metric 

\[
d\left((x_i),(y_i) \right) = \sum_{i\in \mZ^+} \frac{d_i(x_i,y_i)}{2^i\left( 1+d_i(x_i,y_i)\right)}\,.
\]
Observe that if $p_n$ denotes the projection of  $\underleftarrow{\lim}(X_i,f_i)$ to $ X_n$, then for any $x\in X_n$ we have 
\[
\text{diam } p_n^{-1}(x) < \frac{1}{2^n}\, .
\]

\section{Structure Theorems}\label{Sectexpan}

Essential to our results is identifying and using  appropriate representations of our flow spaces. This requires us to define particular types of presentations of flow spaces as limits of inverse sequences of wedges of circles, which we term \emph{expansions} of the spaces. This type of representation of a flow space leads to a direct link with a symbolic representation of it. We begin with introducing the category in which the expansions take place.

\subsection{The wedge category $\cW$.}

\begin{defn}
For $n\in \mZ^+$, let $W_n$ denote the topological space formed by joining $n$ circles at a point. That is, if $S_1,\dots, S_n$ are disjoint circles with corresponding base points $s_i$, then $W_n$ is the quotient space $\cup_i\, S_i/\!\sim,$ where $s_1\sim s_2 \sim \cdots \sim s_n.$ We refer to $W_n$ as the \emph{wedge of $n$ circles} and $w_n$, the equivalence class of  the $s_i$, as the \emph{wedge point}, or the \emph{base point}.
\end{defn}

Notice that $W_n$ is determined up to homeomorphism by this definition, but there are many geometrically different circles that could be used to form the same $W_n.$

\begin{defn}
An \emph{orientation} on $W_n$ is a choice of orientation for each of the circles used to form $W_n$. A \emph{positive map} between oriented wedges of circles $f\colon W_m \to W_n$ is a continuous, surjective map which maps wedge point to wedge point, preserves the orientation of each circle and is such that each point of $W_m$ apart from the wedge point has a neighbourhood which is mapped injectively by $f.$
\end{defn}

\begin{defn}
$\cW$ is the \emph{wedge category}, whose objects are oriented wedges of circles $W_n$ and whose morphisms are positive maps of wedges of circles.  
\end{defn}

Observe that the homotopy class of a positive map  is completely determined by specifying the circles in the codomain (together with their orientations) to which the circles  in the domain space are mapped. This leads naturally to the following notion.

\begin{defn}
Given a positive map $f: (X_2,x_2) \to (X_1,x_1)$ from an oriented wedge of $\ell$ circles $X_2$ to an oriented wedge of $k$ circles $X_1$, we form a \emph{symbolic representation} as follows. 

First assign to each circle $X^1_i$ in $X_1$ a symbol $a_i$ and assign to each circle $X^2_j$ in $X_2$ a symbol $b_j.$ Then if $f$ maps $X^2_i$ to the circles $X^1_{j_1},\dots,X^1_{j_m}$, in that order, we define $f_s(b_i)=a_{j_1}\cdots a_{j_m}$. The function $f_s\colon \{b_1,\dots,b_\ell\}\to \{a_1,\dots,a_k\}^*$ thus determined from the set of symbols $\{b_1,\dots,b_\ell\}$ to the set of finite strings (words) on the symbols $\{a_1,\dots,a_k\}$ is then a \emph{symbolic representation}. 

When necessary, we refer to the set of symbols $\{a_1,\dots,a_k\}$ used to represent symbollically the circles in $X_1$ as its \emph{alphabet}, denoting it with notation such as $\cA_1$.
\end{defn}

Thus, to such a positive map is associated a substitution in the sense of symbolic dynamics, except that the alphabets might vary. What are termed \emph{proper} substitutions, whose definition is similar to that below, play a special role in the study of  substitutions, see, for example, \cite{DHS},\cite{BD}. It is perhaps then not surprising that proper positive maps play an important role in our study of representations of flow spaces.

\begin{defn}
A positive map $f: (X_2,x_2) \to (X_1,x_1)$ of oriented wedges of circles is \emph{proper} if in any symbolic representation $f_s\colon \mathcal{A}_2 \to \mathcal{A}^*_1$  for every  $a_i\in \mathcal{A}_2$, $f_s(a_i)$  starts with the same symbol and ends with the same symbol and hence contains at least $2$ symbols. That is, there exist some $c,d \in \mathcal{A}_1$ (which could coincide), such that for all $a_i\in \mathcal{A}_2,$ $f_s(a_i)= c v_id$ for some word $v_i\in\cA_1^*$. The subword $v_i$ between $c$ and $d$ may vary with $i$, and may be empty. Similarly, we define a map of finite alphabets $\cA\to\cB^*$ as \emph{proper} if it represents a proper positive map of wedges of circles.
\end{defn}

Observe that since positive maps are surjective, a proper map of finite alphabets $s\colon \{a_1,\dots,a_k\}\to \{b_1,\dots,b_\ell\} ^*$ satisfies the property that for each $b_j$ there is some $a_i$ with $b_j$ appearing in $s(a_i)$ . 

While proper maps are crucial to the structure of aperiodic flow spaces, the following notion is key for the structure of more general flow spaces.

\begin{defn}
A positive map $f: (X_2,x_2) \to (X_1,x_1)$ of oriented wedges of circles is \emph{degenerate proper} if in any symbolic representation $f_s: \mathcal{A}_2 \to \mathcal{A}^*_1$ for every symbol $b\in \mathcal{A}_2$, $f_s(b)$ starts and ends with the same single symbol $a$, and for at least one $b\in \mathcal{A}_2$, $f_s(b)=a.$  Similarly, we define such a map of finite alphabets as \emph{degenerate proper} if it represents a degenerate proper positive map of wedges of circles.
\end{defn}

Observe that the composition of any number of positive maps followed by a proper map is proper and words in the values of the symbolic map representing  the composition of $n$ proper maps have at least $2^n$ symbols. In contrast,  the composition of any number of positive maps followed by a degenerate proper map can either proper or degenerate proper.

\begin{defn}
A \emph{projection} from a flow space $\mX$ to an oriented wedge of circles $W_n$ is a surjective continuous map $p\colon \mX \to W_n$ satisfying the following conditions :
\begin{enumerate}
    \item For each point $x\in \mX$ there is a corresponding $\e_x>0$ so that the restriction $p|(-\e_x,\e_x).\,x$ is injective. That is, $p$ is locally injective along orbits.
    \item Endowing the orbits of $\mX$ with the orientation provided by the flow, $p$ preserves the orientation of each flow orbit. 
\end{enumerate}

\end{defn}

While projections do not generally preserve the action of the flow (merely their direction), they do have some key properties. 

\begin{lemma}\label{projectionfacts}
Suppose  $p\colon \mX \to W_n$ is a projection. For any point $x\in \mX$, there is an unbounded sequence of $t>0$ for which $p(t.x)=w_n$. And for any $w\in W_n,$ $p^{-1}(w)$ is a totally disconnected subspace of $\mX.$
\end{lemma}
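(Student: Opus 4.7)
The plan is to prove the two claims separately: the first by a compactness argument ruling out spiralling behaviour, the second by a local matchbox argument combined with a clopen-basis extraction.

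For the first claim, set $\gamma(t) = p(t.x)$, a continuous curve in $W_n$. I first note that $\gamma^{-1}(w_n)$ is discrete in $\mR$: whenever $\gamma(t_0) = w_n$, local injectivity of $p$ along the orbit through $y := t_0.x$ supplies some $\epsilon > 0$ with $\gamma(t_0 + s) \neq w_n$ for $0 < |s| < \epsilon$. Suppose for contradiction that $\gamma^{-1}(w_n) \cap (0, \infty)$ is bounded above by some $T \geq 0$. Then $\gamma|_{(T, \infty)}$ takes values in $W_n \setminus \{w_n\}$, a disjoint union of $n$ open arcs, and by connectedness its image lies in a single such arc $S_i \setminus \{w_n\}$. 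Parametrising the arc as $(0,1)$ in the positive orientation, the fact that $\gamma$ is locally injective and locally orientation-preserving makes it globally strictly increasing on $(T, \infty)$, so it converges to some $L \in (0, 1]$ as $t \to \infty$. By compactness of $\mX$, I then extract a convergent subsequence $(T + k_j).x \to y$ from $\{(T + k).x\}_{k \in \mZ^+}$; continuity of $p$ and the flow gives $p(s.y) = \lim_j \gamma(s + T + k_j) = L$ for all $s \geq 0$. So $p$ is constant along the entire forward orbit of $y$, contradicting local injectivity of $p$ along that nondegenerate orbit (the flow has no fixed points).

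For the second claim, the strategy is to produce a clopen neighbourhood basis at each $x_0 \in p^{-1}(w)$ inside $p^{-1}(w)$; since $p^{-1}(w)$ is compact Hausdorff, this yields zero-dimensionality and hence total disconnectedness. Fix $x_0$ and take a matchbox neighbourhood $V \cong Z \times (-1,1)$ of $x_0$ with compact closure of the form $Z \times [-1,1]$. For any connected subset $C$ of $\overline{V} \cap p^{-1}(w)$, the continuous projection $C \to Z$ has connected image in a zero-dimensional space, which forces a singleton $\{z^*\}$; so $C$ lies in the single match $\{z^*\} \times [-1,1]$. Along this match, $p^{-1}(w)$ is discrete by exactly the same local-injectivity argument as in the first claim (valid for any $w \in W_n$, not only $w_n$), and hence finite by compactness, so $C$ is a singleton. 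Thus $\overline{V} \cap p^{-1}(w)$ is compact Hausdorff with only singleton connected subsets, hence zero-dimensional. Separating $x_0$ from the closed set $\partial V \cap p^{-1}(w)$ by a clopen subset of $\overline{V} \cap p^{-1}(w)$ supplies a clopen neighbourhood of $x_0$ in $p^{-1}(w)$ lying inside $V$; shrinking $V$ gives the required small clopen basis.

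The main obstacle will be the first claim: ruling out the scenario where $\gamma$ spirals toward $w_n$ without reaching it is precisely where compactness of $\mX$ and continuity of the flow enter crucially, producing the limit point $y$ whose forward orbit is collapsed by $p$ to a single value in $W_n$. After that, the second claim follows routinely from the matchbox structure and discreteness of fibres along orbits.
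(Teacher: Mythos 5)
Your proof is correct. For the first claim you follow essentially the paper's route: assuming the forward orbit eventually avoids $w_n$, you trap its $p$-image in a single circle, use monotonicity to get a limit value, and then use compactness of $\mX$ to produce a point whose entire forward orbit is collapsed by $p$, contradicting local injectivity. The only difference is that you take a single subsequential limit $y$ of $(T+k).x$ where the paper takes a minimal set $M$ in the closure of $(T,\infty).x$; both serve the same purpose and yours is marginally more elementary. For the second claim your argument genuinely diverges from the paper's. The paper shows that each match of a suitably chosen matchbox neighbourhood meets the fibre $p^{-1}(w)$ in \emph{at most one} point --- for $w\neq w_n$ by choosing the neighbourhood inside $\mX\setminus p^{-1}(w_n)$ and noting that an orientation-preserving locally injective path from $w$ back to $w$ must cross $w_n$, and for $w=w_n$ by an analogous argument using $n$ auxiliary points, one on each circle --- so the fibre injects continuously into the zero-dimensional transversal $Z$ and inherits a clopen basis. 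You instead show that every connected subset of $\overline V\cap p^{-1}(w)$ is a singleton (projection to $Z$ forces it into one match, and the fibre is finite in a compact match by local injectivity), then invoke the classical fact that a totally disconnected compact Hausdorff space is zero-dimensional, and separate $x_0$ from $\partial V$ by a clopen set. Your version buys a uniform treatment of all $w$, including $w=w_n$, with no case split, at the cost of appealing to the totally-disconnected-implies-zero-dimensional theorem; the paper's version is more self-contained but needs the two-case analysis and the stronger ``at most one point per match'' bound, which your argument does not require.
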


\begin{proof} 

Suppose  that there were a  $T>0$ so that for any $t>T$ we have $p(t.\,x)\neq w_n$. Then as $\mX$ is compact, the closure of $(T,\infty).\,x$ contains a minimal set $M$, which is not a fixed point by our hypothesis. But $p\left( (T,\infty).\,x\right) $ must be contained in a single circle $X_i^1$ as otherwise this image would contain $w_n.$ As $p$ preserves orienatation and is locally injective along $(T,\infty).\,x$, $p(t.\,x)$ must limit to a single point $w$ (possibly $w_n$) as $t \to \infty$. But then by continuity we would have $p(M)=w$, contradicting the local injectivity of $p$ along the orbits of points in $M$. Thus, there must be an unbounded sequence of $t>0$ for which $p(t.x)=w_n$. 

Let  $w\in W_n\setminus\{w_n\}$ and consider any point $x \in p^{-1}(w)=F.$ Then for a zero--dimensional compact set $Z$ there is a matchbox neighbourhood $U$ of $x$ homeomorphic to $Z\times (-1,1)$ contained in the open set $\mX\setminus p^{-1}(w_n).$ Then, for each $z\in Z$ there is at most one point of $F$ in the subset of $\mX$ in the match corresponding to $\{z\}\times (-1,1)$ as otherwise there would be a point of $U$ whose orbit would have $p$ image intersecting $w_n$. Thus, $F \cap U $ can be identified with a subset of $Z$, and so there is a local basis for $F$ at $x$ consisting of clopen sets. As this is true for all points of $F,$ $F$ is totally disconnected. A similar argument applies to $p^{-1}(w_n),$ only we use a set of $n$ points consisting of one point (other than $w_n$) from each circle in $W_n$ to play the role that $w_n$ played in the previous case.  

\end{proof}

\subsection{Expansions and flow expansions.}
\begin{defn}\label{expansion}
For a flow space $\mX$ and point $x_\bullet \in \mX$, an \emph{expansion of $\mX$ about $x_\bullet$} consists of an inverse sequence of oriented wedges of $k_n$ circles $X_n$ with wedge point $x_n$
\[
X_1 \xleftarrow{f_1} X_2 \xleftarrow{f_2} X_3 \xleftarrow{f_3} \cdots
\]
satisfying the following conditions:
\begin{enumerate}
    \item Each bonding map $f_n$ is a positive map.
    \item For each $n\in \mZ^+$ there is a projection $p_n: (\mX, x_\bullet) \to (X_n,x_n)$ so that:
\begin{enumerate}
\item For $k\in \mZ^+$, $p_n = f_{n\, , \, n+k} \circ p_{n+k}$.
\item The map $h\colon \mX \to  \underleftarrow{\lim}\{X_1 \xleftarrow{f_1} X_2 \xleftarrow{f_2} X_3 \xleftarrow{f_3} \cdots\}$ defined by $h(x)=(p_n(x))_{n\in \mZ^+}$ is a homeomorphism.

\end{enumerate}   
\end{enumerate}

\end{defn}
We denote an expansion about $x_\bullet$ by $(\mX,x_\bullet)\approx\eX.$ Observe that the limits are in the category of compact metric spaces and not in $\cW$.

It is not always the case that a flow space contains a point about which there is an expansion, and there are some flow spaces which admit an expansion about some of its points but not about others. The following notion is the key to distinguishing the cases, and in particular will give us in Theorem \ref{expansionexists} a necessary and sufficient for a flow space to have an expansion.

\begin{defn}
The flow space $\mX$ is \emph{dense at} the point $x\in \mX$ if every neighbourhood of $x$ contains a point from the forward flow orbit of each point of $\mX$. 
\end{defn}

\begin{prop}
If the flow space $\mX$ is not dense at $x_\bullet\in \mX,$ then $\mX$ does not admit an expansion about $x_\bullet.$
\end{prop}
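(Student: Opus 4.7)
The plan is to argue by contrapositive: I will assume that an expansion $(\mX,x_\bullet)\approx\eX$ exists and show that $\mX$ must be dense at $x_\bullet$. So let $U$ be an arbitrary neighbourhood of $x_\bullet$ and let $y\in\mX$ be arbitrary; the goal is to exhibit some $t>0$ with $t.y\in U$.

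The first step translates the neighbourhood $U$ into data in the inverse sequence. Since $h\colon \mX\to \eX$ is a homeomorphism with $h(x_\bullet)=(x_n)_{n\in\mZ^+}$, and since the sets $\{(z_m)_m : z_n\in V\}$ with $V$ a neighbourhood of $x_n$ in $X_n$ form a neighbourhood base of $(x_n)_n$ in the product topology on $\eX$, we can choose $n\in\mZ^+$ and an open neighbourhood $V$ of the wedge point $x_n\in X_n$ with
\[
p_n^{-1}(V)\ \subseteq\ U.
\]
In particular $p_n^{-1}(x_n)\subseteq U$.

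The second step invokes Lemma \ref{projectionfacts} applied to the projection $p_n\colon\mX\to X_n$ and the point $y$: there is an unbounded sequence of $t>0$ for which $p_n(t.y)=x_n$, that is, for which $t.y\in p_n^{-1}(x_n)\subseteq U$. This yields points of the forward flow orbit of $y$ inside $U$, so $U$ meets the forward orbit of every $y\in\mX$. As $U$ was arbitrary, $\mX$ is dense at $x_\bullet$, which is precisely the contrapositive of the statement.

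I do not expect any serious obstacle: the entire content lies in combining the description of the neighbourhood base in an inverse limit with the recurrence to the wedge point already recorded in Lemma \ref{projectionfacts}. The only delicate point to check is that the product neighbourhoods of $(x_n)_n$ do descend to sets of the form $p_n^{-1}(V)$ under the homeomorphism $h$, which follows directly from the definition $h(x)=(p_n(x))_{n}$ and condition (2a) of Definition \ref{expansion}.
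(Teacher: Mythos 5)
Your argument is correct and is essentially the paper's proof recast as a contrapositive: both rest on Lemma \ref{projectionfacts} forcing the forward orbit of every point to meet the fibre $p_n^{-1}(x_n)$, combined with the fact that these fibres shrink into any given neighbourhood of $x_\bullet$ (which the paper phrases as the fibre diameters tending to $0$, yielding a contradiction with a forward orbit avoiding a fixed neighbourhood). No gap; the two write-ups differ only in logical packaging.
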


\begin{proof}
Suppose $\mX$ is not dense at $x_\bullet$ but that  we have an expansion about $x_\bullet$ as in Definition \ref{expansion}. Then there is some neighbourhood $U$ of $x_\bullet$ that does not intersect the forward orbit of some $y\in \mX.$ Let $\e>0$ be less
than the distance from $x_\bullet$ to the complement of $U.$ As each $p_n$ is a projection, by Lemma \ref{projectionfacts} for each $n\in \mZ^+$ there is a $y_n$ on the forward orbit of $y$ with $p_n(y_n)=x_n=p_n(x_\bullet).$ By construction, $d(y_n,x_\bullet)>\e$. Since $h$ as in the Definition \ref{expansion} is a homeomorphism, the fibres of the maps $p_n$ must have diameters converging to $0$ as $n \to \infty.$ But we have that the $p_n$ fibre of each $x_n$ is greater than $\e$ in diameter, a contradiction. 

\end{proof}

\begin{ex}
Consider the subshift $S$ of $\{0,1\}^{\mZ}$  generated by the points $(\mathbf{0})$, the point consisting of all $0$'s, and $(\dots,0,0,1,0,0,\dots)$, the point which is $0$ except at $n=0$. The suspension of $S$ will be dense at any point on the suspension orbit of $(\mathbf{0})$ but not dense at any point of the other orbit. 
\end{ex}

A single minimal set together with orbit(s) limiting to it will give similar examples, but any flow space with two distinct minimal sets will not be dense about any point. 

\begin{lemma}\label{periodic}
Suppose $\mX$ admits an expansion  $(\mX,x_\bullet)\approx\eX$. If there is an $n\in \mathbb{Z}^+$ so that for all $m>n$, $f_{m,n}$ is degenerate proper, then $\mX$ contains a periodic orbit.
\end{lemma}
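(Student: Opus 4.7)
The strategy is to construct, inside $\mX$, a topological circle $K$ realised as the inverse limit of compatible oriented circles in the wedges $X_m$, and to show that $K$ must be a periodic flow orbit.

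\emph{Combinatorial setup.} By degenerate properness of each $f_{m,n}$, there is a distinguished letter $a_m\in\cA_n$ such that every $f_{m,n}(b)$ starts and ends with $a_m$, and at least one $b\in\cA_m$ satisfies $f_{m,n}(b)=a_m$. The factorisation $f_{m+1,n}=f_{m,n}\circ f_m$ shows $a_{m+1}=a_m$; write $a\in\cA_n$ for this common letter. Set $B_m=\{b\in\cA_m:f_{m,n}(b)=a\}$. Since a length-one word $a$ can only arise via $f_{m,n}$ from a length-one input (each letter is sent to a word of length $\geq 1$), the restriction of $f_m$ to $B_{m+1}$ takes values in $B_m$. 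Each $B_m$ is nonempty and finite, so König's lemma yields a compatible sequence $(b_m)_{m>n}$ with $b_m\in B_m$ and $f_m(b_{m+1})=b_m$.

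\emph{Geometric realisation.} Let $C_{b_m}\subset X_m$ be the circle labelled $b_m$. The word $f_{m,n}(b_m)=a$ means $f_{m,n}|_{C_{b_m}}\colon C_{b_m}\to C_a$ is a positive, locally injective, degree-one surjection between oriented circles, hence a homeomorphism; likewise $f_m|_{C_{b_{m+1}}}\colon C_{b_{m+1}}\to C_{b_m}$ is a homeomorphism. Define
\[
K\ =\ \bigcap_{m>n}p_m^{-1}(C_{b_m})\ \subset\ \mX.
\]
Under the homeomorphism $\mX\approx\underleftarrow{\lim}(X_m,f_m)$, the set $K$ corresponds to $\underleftarrow{\lim}(C_{b_m},f_m|_{C_{b_{m+1}}})$, an inverse limit of circles with homeomorphism bonding maps, and is therefore itself homeomorphic to $S^1$.

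\emph{Periodicity (the hardest step).} First, $K\cap p_n^{-1}(x_n)=\{x_\bullet\}$: if $y\in K$ has $p_n(y)=x_n$, then each $p_m(y)\in C_{b_m}\cap f_{m,n}^{-1}(x_n)=\{x_m\}$, so $y=x_\bullet$. Pick any $y\in K\setminus\{x_\bullet\}$, so $p_m(y)\in C_{b_m}\setminus\{x_m\}$ for every $m>n$. By Lemma~\ref{projectionfacts} there is a first time $t_0>0$ with $p_n(t_0.y)=x_n$. The crux is the claim that the first exit time of $p_m(t.y)$ from $C_{b_m}$ equals this same $t_0$ for every $m>n$. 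On the one hand, $p_m(t.y)$ can exit $C_{b_m}$ only through the wedge point $x_m$, and then $p_n(t.y)=x_n$, so the exit time is at least $t_0$; on the other hand, while $p_n(t.y)\in C_a\setminus\{x_n\}$ the relation $p_n=f_{m,n}\circ p_m$ together with the homeomorphism $f_{m,n}|_{C_{b_m}}\colon C_{b_m}\to C_a$ forces $p_m(t.y)\in C_{b_m}\setminus\{x_m\}$, so the exit time is at most $t_0$. Thus $p_m(t_0.y)=x_m$ simultaneously for all $m>n$, and $t_0.y\in\bigcap_m p_m^{-1}(x_m)=\{x_\bullet\}$. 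The identical argument run in backward time produces $t_1<0$ with $t_1.y=x_\bullet$, whence
\[
(t_0-t_1).x_\bullet\ =\ (t_0-t_1).(t_1.y)\ =\ t_0.y\ =\ x_\bullet,
\]
so $x_\bullet$ lies on a periodic orbit of period dividing $t_0-t_1>0$. The essential point is the simultaneity of first exits at all levels $m$; it is precisely the single-letter feature of degenerate properness that upgrades $f_{m,n}|_{C_{b_m}}$ to a genuine homeomorphism and enforces this simultaneity (a longer word would make the restriction a multi-fold cover and let the exit times split across levels).
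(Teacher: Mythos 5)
Your proof is correct and its core is the same as the paper's: both arguments use degenerate properness to produce, at every level $m>n$, a circle that $f_{m,n}$ carries homeomorphically onto a fixed circle of $X_n$, and both realise the inverse limit of these circles as a topological circle $K$ inside $\mX$ which is then shown to be a periodic orbit. Where you diverge is in the two places the paper argues by assertion. First, you justify via K\"onig's lemma that the distinguished circles can be chosen compatibly under the individual bonding maps $f_m$; the paper simply picks a circle $X^m_1$ at each level and uses the inverses of the restrictions $f_{m,n}|_{X^m_1}$ without addressing this compatibility, which is in fact needed for its map $e$ to land in the inverse limit, so this is a genuine gap you fill. Second, the paper closes by citing the general fact that any circle in a fixed-point-free flow space is a periodic orbit, whereas you prove periodicity directly by the simultaneous-exit-time argument; this buys a self-contained, purely dynamical proof at the cost of length. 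Two small repairs to your write-up: the backward-time step invokes a time-reversed version of Lemma \ref{projectionfacts}, which is stated only for forward time (its proof reverses without change, but say so); and in the exit-time paragraph the deduction that the exit time is at most $t_0$ needs one more line --- the path stays in the open set $C_{b_m}\setminus\{x_m\}$ on $[0,t_0)$, hence $p_m(t_0.y)\in C_{b_m}$ by continuity, and $C_{b_m}\cap f_{m,n}^{-1}(x_n)=\{x_m\}$ then forces $p_m(t_0.y)=x_m$ (the observation that $p_n(t.y)\in C_a\setminus\{x_n\}$ "forces" $p_m(t.y)\in C_{b_m}\setminus\{x_m\}$ is itself a continuity argument, since $f_{m,n}^{-1}(C_a\setminus\{x_n\})$ may meet other circles of $X_m$). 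Neither point affects the validity of the argument.
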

\begin{proof}
Suppose for a given expansion of $\mX$, $n$ is such that for all $m>n$, $f_{m,n}$ is degenerate proper. This means that there is a circle, $X^n_1$ in $X_n$ say, so that for each $m>n$ we have a circle, $X^m_1$ in $X_m$ say, such that $f_{m,n}\left( X^m_1\right) = X^n_1$ and the $f_{m,n}$ image of $X^m_1$ only covers $X^n_1$ once. Thus, the restriction of $f_{m,n}$ to $X^m_1$ is a homeomorphism onto $X^n_1$. Consider the truncation of the inverse sequence formed by starting the sequence with $n,$ whose limit is still homeomorphic to $\mX.$ Then the map $e$ from $X^n_1$ into the limit of the truncated sequence formed by sending a point in $X^n_1$ to itself in the $n^{\text{th}}$ co-ordinate and for $m>n$ to its image under the inverse of the restriction of  $f_{m,n}$ to $X^m_1$. Then $e$ is an embedding of $X^n_1$ into the limit space, and any circle in a flow space without fixed points is a periodic orbit.
\end{proof}
As a flow space $\mX$ admitting an expansion has only one minimal set, any periodic orbit in $\mX$ would be its unique minimal set. Recall that we use \emph{aperiodic} to refer to a flow space $\mX$ with no periodic orbit.

\bigskip
As for projections, an expansion of a flow space $\mX$ preserves only the direction of the flow, not the flow action itself. However, it is convenient also to introduce a special class of expansions that do respect the flow action. Of course, for $n>1$, $W_n$ does not support a flow due to the wedge point, and this leads to the following notions.

\begin{defn}
A \emph{partial flow} on $W_n$ consists of a non-constant, periodic flow on each of the circles $S_i$  used to form $W_n.$    
\end{defn}

In the special case of $n=1,$ a partial flow on $W_n$ is a flow. We consider $W_n$ endowed with a partial flow to be oriented by the positive direction of the partial flow.  Given a projection $p:\mX \to W_n$ where $W_n$ has a partial flow, we shall construct a map of flow spaces $\widehat{p}$ from $\mX$ to a tiling space $\cT(p)$ as explained below. 

\begin{defn}\label{global tiling space}
Suppose $W_n$ is endowed with a partial flow for which $S_i$ has period $P_i$. The \emph{global tiling space } $\cT(W_n)$  is the suspension of the shift  $\left( \{a_1,\dots,a_{n} \}^\mZ, s  \right)$  under the ceiling function which takes on the value $P_i$ over the cylinder set $ \{ (x_n)_{n\in \mZ}\colon x_0=a_i \}$.
\end{defn}
 
To a point $x\in \mX$ there is a naturally associated element $\tau (x) \in \{a_1,\dots,a_{n} \}^\mZ$ given by the sequence of $S_i$ that $p$ maps the orbit of $x$ to, with the understanding that if $p(x)=w_n$, the  $0^{\rm th}$ entry of $\tau (x)$, which we denote $(\tau (x))_0$\,, is $a_i$, where $S_i$ is the first circle the forward orbit of $x$ is mapped to by $p.$ Then define the continuous map  $\widehat{p}\colon \mX \to \cT(W_n)$ by $\widehat{p}(x)=(\tau(x), t)$, where $t$ satisfies $p(x)=t\,.\,w_n=t\,.\,s_i \in S_i \subset W_n,$ where $w_n$ is the  wedge point considered as the point $s_i\in S_i$ and $i$ corresponds to $(\tau (x))_0$.

\begin{defn} \label{tilingfactordef}
Suppose $p\colon\mX \to W_n$ is a projection and $W_n$ has a partial flow. With $\widehat{p}\colon \mX \to \cT(W_n)$ the induced map, denote the image of $\widehat{p}$ by $\cT(p)$.
\end{defn}

Given a projection $p:\mX \to W_n$, let $\bar{p}\colon \cT(W_n) \to W_n$ be the map that for $t\in [0,P_i]$ sends the set $\{(t,(x_n)_{n\in \mZ}) \colon x_0=a_i \}$ to $t\,.\,w_n=t\,.\,s_i \in S_i \subset W_n,$ where $w_n$ is the  wedge point considered as the point $s_i\in S_i.$  By construction we have $\bar{p}\circ \widehat{p}=p$.

In general, there is no reason to expect $\widehat{p}$  to be injective or to respect the flow actions.

\begin{defn}
A projection $p\colon \mX \to W_n$ from a flow space to $W_n$ endowed with a partial flow is a \emph{factor map} if $\widehat{p}\colon \mX \to \cT(p)$ is a factor map, i.e.,  $\widehat{p}(t\, .\, x) = t\,. \,\widehat{p}(x).$
\end{defn}

Thus,  outside the fiber of $w_n$, $p$ locally conjugates the flow on $\mX$ with the partial flow on $W_n.$ 

Given a partial flow on $W_m$ with period $P_i$ on the circle $S_i,$ let $\widetilde{W}_m$ be a periodic flow space of period $\sum_iP_i$, and let $p \colon \widetilde{W}_m \to W_m$ be a factor map which winds around the circles $S_i$ once in succession (the order of succession is not important).
\begin{defn}
If $W_m$ and  $W_n$ are endowed with partial flows and $p$ and $\widetilde{W}_m$ are as above, a positive map $f\colon W_m\to W_n$ is a \emph{factor map} if    $f \circ p \colon \widetilde{W}_m \to W_n$ is a factor map.
\end{defn}

Observe that in this case both $\cT(p)$ and $\cT(f\circ p)$ are periodic. 
 
\begin{defn}\label{flow expansiondef}
For a flow space $\mX$ and point $x_\bullet \in \mX$, a \emph{flow expansion of $\mX$ about $x_\bullet$} consists of an inverse sequence of wedges of $k_n$ circles $X_n$ endowed with a partial flow and with wedge point $x_n$ 
\[
X_1 \xleftarrow{f_1} X_2 \xleftarrow{f_2} X_3 \xleftarrow{f_3} \cdots
\]
satisfying the following conditions:
\begin{enumerate}
    \item Each bonding map $f_n$ is a factor map.
    \item For each $n\in \mZ^+$ there is a factor map $p_n: (\mX, x_\bullet) \to (X_n,x_n)$ so that:
\begin{enumerate}
\item For $k\in \mZ^+$, $p_n = f_{n\, , \, n+k} \circ p_{n+k}$.
\item The map $h\colon \mX \to  \underleftarrow{\lim}\{X_1 \xleftarrow{f_1} X_2 \xleftarrow{f_2} X_3 \xleftarrow{f_3} \cdots\}$ defined by $h(x)=(p_n(x))_{n\in \mZ^+}$ is a homeomorphism.

\end{enumerate}   
\end{enumerate}

\end{defn}
We denote such a flow expansion about $x_\bullet$ by $(\mX,x_\bullet)\approx_f\eX.$

Observe that this notion of flow expansion is not purely topological and includes the flow in an essential way and that a flow expansion is a special type of expansion.

\begin{theorem}\label{flow expansion}
If the flow space $\mX$ is dense at $x_\bullet$ and if there is a factor map $p_1:(\mX,x_\bullet) \to (X_1,x_1)$ to a wedge of circles with wedge point $x_1,$ then $\mX$ admits a flow expansion about $x_\bullet .$ If $\mX$ is aperiodic, then the flow expansion can be chosen so that each bonding map $f_n$ is proper; otherwise, $\mX$ has a periodic orbit as its unique minimal set and it admits a flow expansion about $x_\bullet$ for which $f_{n,1}$ is degenerate proper for each $n.$
\end{theorem}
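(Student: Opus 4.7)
I will construct the flow expansion inductively on $n$. The key mechanism is a Kakutani--Rokhlin-style refinement of the first-return dynamics on the fibre $Z_n := p_n^{-1}(x_n)$, which by Lemma \ref{projectionfacts} is a compact, totally disconnected subset of $\mX$. Since every forward orbit hits $Z_n$ infinitely often (again by Lemma \ref{projectionfacts}), the first-return map $T_n : Z_n \to Z_n$ is a well-defined homeomorphism, and the flow on $\mX$ is realised (via $\widehat{p_n}$) on top of $T_n$ with a ceiling function locally constant on the cylinders specifying which circle of $X_n$ the orbit next enters.

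To build $X_{n+1}$, $f_n$, and $p_{n+1}$: the dense-at-$x_\bullet$ assumption and compactness give a uniform upper bound on the time every orbit needs to enter any open neighbourhood of $x_\bullet$. I pick a small-diameter clopen neighbourhood $V$ of $x_\bullet$ in $Z_n$; the first-return time $r : V \to \mZ^+$ is then bounded and locally constant, so $V$ splits into a finite clopen partition $V = V_1 \sqcup \cdots \sqcup V_m$ with $r \equiv r_j$ on $V_j$ and $T_n^{r_j}(V_j) \subseteq V_{k(j)}$ for some $k(j) \in \{1,\ldots,m\}$. The sets $\{T_n^i V_j : 1 \le j \le m,\, 0 \le i < r_j\}$ clopen-partition $Z_n$. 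For each $j$, let $w_j$ be the word over the alphabet of $X_n$'s circles recording the sequence of circles traced by the flow from $V_j$ to $T_n^{r_j}(V_j)$. Define $X_{n+1}$ to be the wedge of circles $S'_1,\ldots,S'_m$ with $S'_j$ of period equal to the total flow-time of $w_j$; take $f_n : X_{n+1} \to X_n$ to wind $S'_j$ once through $w_j$; and define $p_{n+1} : \mX \to X_{n+1}$ by sending $x = t.w$ (for $w \in V_j$ and $t \in [0,P'_j)$ the flow-time along the $j$th tower) to the point of $S'_j$ at parameter $t$. That $f_n$ is a positive factor map, that $p_{n+1}$ is a factor map with $f_n \circ p_{n+1} = p_n$, and that $p_{n+1}(x_\bullet) = x_{n+1}$, all follow from the construction.

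For $h : \mX \to \varprojlim(X_n,f_n)$ to be a homeomorphism, fibre-diameters must shrink to zero. The fibre $p_{n+1}^{-1}(x_{n+1})$ equals $V$, and fibres over other points are flow-translates of subsets of $V$; by uniform continuity of the flow on bounded time-intervals, each has diameter at most $K_n \cdot \operatorname{diam}(V)$ for a constant $K_n$. Since $Z_n$ is compact, totally disconnected, and $x_\bullet$ admits arbitrarily small clopen neighbourhoods in $Z_n$, I choose $V$ with $\operatorname{diam}(V) < 1/(n K_n)$ at each stage, forcing the fibre-diameters to tend uniformly to $0$.

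For the properness refinements: in the aperiodic case, further require $V$ such that (i) every $z \in V$ shares the outgoing circle of $x_\bullet$, so all $w_j$ begin with a common symbol; (ii) every $z \in T_n^{r_j}(V_j)$ shares the incoming circle of $x_\bullet$, so all $w_j$ end with a common symbol; and (iii) $r_j \ge 2$ for all $j$. Condition (iii) is forced by aperiodicity: were $r(w_k) = 1$ for some $w_k \to x_\bullet$, continuity of $T_n$ would give $T_n(x_\bullet) = x_\bullet$, a periodic point, contradiction. These together make $f_n$ proper. If $\mX$ is not aperiodic, it has a periodic orbit, and an $\omega$-limit-set argument using dense-at-$x_\bullet$ (every minimal set must contain $\omega(x_\bullet)$, hence $x_\bullet$ itself) shows the periodic orbit is the unique minimal set and contains $x_\bullet$. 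In this case I restart the construction with $X_1 = S^1$ and $p_1$ wrapping the periodic orbit once around, and at each stage arrange one distinguished $V_j$ so that $x_\bullet$'s orbit returns to it in exactly one period; then $f_{n,1}(S'_j)$ is the single symbol $a$ of $X_1$ while the other $f_{n,1}(S'_k)$ are words in $\{a\}^*$ starting and ending with $a$, giving degenerate properness. The main obstacle throughout is simultaneously achieving the properness conditions while keeping fibre-diameters shrinking—and, in the periodic case, coordinating the single distinguished circle across all levels—which in each instance reduces to careful choice of $V$ using continuity of $T_n$ at $x_\bullet$.
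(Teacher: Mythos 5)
Your main construction is essentially the one the paper uses: a Kakutani--Rokhlin-style tower decomposition over a shrinking clopen neighbourhood of $x_\bullet$ in the fibre $p_n^{-1}(x_n)$, with that neighbourhood confined to a single ``outgoing circle'' and ``incoming circle'' class so that all return words share first and last letters, and with fibre diameters controlled by uniform continuity of the flow on a bounded time interval. Your treatment of the aperiodic case is a valid variant: you force every tower to have height at least $2$ directly (if arbitrarily small $V$ always carried a height-one tower, continuity of the return map would give $T_n(x_\bullet)=x_\bullet$ and hence a periodic orbit through $x_\bullet$), whereas the paper allows degenerate proper bonding maps throughout and only at the end splits into cases via Lemma \ref{periodic}. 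Two small slips: uniform continuity gives a $\delta$--$\epsilon$ bound, not the Lipschitz-type estimate $K_n\cdot\operatorname{diam}(V)$ you write (the fix is to choose $\operatorname{diam}(V)<\delta$ for $\epsilon=1/n$, as the paper does), and your $V_j$ must be refined by full itinerary, not just by return time, for the words $w_j$ to be well defined.

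The genuine gap is in the periodic case. You ``restart the construction with $X_1=S^1$ and $p_1$ wrapping the periodic orbit once around.'' Since the circle carries a genuine (not merely partial) flow, a factor map $p_1\colon\mX\to S^1$ of period $\pi$ satisfies $p_1(t.x)=t.p_1(x)$ for all $t$, which forces every return time to the fibre $p_1^{-1}(x_1)$ to equal $\pi$ exactly; that is, $\mX$ would have to be flow-conjugate to a suspension with constant ceiling $\pi$. This is not implied by the hypotheses: a flow space consisting of a periodic orbit $P$ of period $\pi$ together with orbits spiralling onto it, with return times to a transversal converging to $\pi$ but never equal to it, is dense at $x_\bullet\in P$ and admits a factor map to some wedge of circles, yet admits no factor map to a period-$\pi$ circle wrapping $P$ once. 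The detour is also unnecessary: continuing your own construction with the given $X_1$, and choosing each $T_n$ small enough to miss the finitely many intermediate crossings of $P$ with $T_{n-1}$, the tower containing $x_\bullet$ has return time exactly $\pi$ at every stage and hence a single-letter return word, while all other words begin and end with that letter; this yields degenerate proper bonding maps directly, and uniqueness of the minimal set follows from your (correct in substance, though garbled in statement) observation that density at $x_\bullet$ puts $x_\bullet$ in $\omega(y)$ for every $y$, hence in every minimal set. This is how the paper closes the case, invoking Lemma \ref{periodic} together with truncation and re-indexing.
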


\begin{proof}
Let $T_1= p_1^{-1}(x_1),$ and let $k_1$ denote the number of circles $X^1_i$ used to form $X_1$, with $P^1_i$ denoting the period of the flow on $X^1_i$.  We partition $T_1$ into $k_1$ disjoint clopen sets $K^1_1,\dots,K^1_{k_1}$ as follows. Letting $t_0 = \frac{\min \{P^1_i\}}{2}$, we have that $p_1\left( t_0 \, . T_1 \right)$ is partitioned into $k_1$ disjoint closed (and hence clopen) sets  $J^1_1,\dots,J^1_{k_1},$ where $J^1_i = p_1\left( t_0 \, . T_1 \right) \cap X^1_i$. Setting $K^1_i = -t_0 \, . p_1^{-1} \left(J^1_i\right)$ gives us the desired partition of $T_1.$  Thus, for any two points of $y,z\in K^1_i,$ we have that $p_1(t.y)=p_1(t.z)$ in $X^1_i$ for $t\in [0,P^1_i]$, and $\mX$ is the union of the ``tubes" $[0,P^1_i].K^1_i$, which intersect only in $T_1$. Let $k>0$ be the minimum distance between any two $K^1_i.$ 

Similar to the $K^1_i,$ $T_1$ is partitioned into $k_1$ clopen sets $L^1_1,\dots,L^1_{k_1},$ where $L^1_i$ consists of those points of $T_1$ whose orbit is mapped by $p_1$ onto the circle $X^1_i$ immediately before hitting $T_1$ in the flow.  Without loss of generality, assume $x_\bullet\in K^1_1 \cap L^1_1.$ Now choose a clopen neighbourhood $T_2\subset K^1_1 \cap L^1_1$ of $x_\bullet $ in $T_1$ with diameter less than $\frac{1}{2}.$ As the flow is dense at $\mX$, we know that the forward orbit of each point in $T_2$ will return to $T_2$, and by the compactness of $T_2,$ there is a maximum such return time $R.$ As $T_2$ is clopen, the return time to $T_2$ will be locally constant. By the uniform continuity of the flow action on $[0,R].T_1,$ there is a $\delta>0$ so that if two points of $x,y\in T_1$ satisfy $d(x,y)<\delta,$  then  $d(t.x,t.y)<\min (\frac{1}{2},k)$ for all $0\leq t \leq R$. (Note, the $\frac{1}{2}$ is to provide a uniform control on the diameter of fibres of the projection and is adjusted to $\frac{1}{n}$ in stage $n$ of the construction.)

Now partition $T_2$ into finitely many, say $k_2$, clopen sets $K^2_1,\dots,K^2_{k_2}$ of diameter less than $\delta$ and so that each point of $K^2_i$ has the same return time $P^2_i$ to $T_2.$ By construction, $\mX$ is the union of $k_2$ tubes $ [0,P^2_i].K^2_i$ which intersect only in $T_2$, and, by the choice of $\delta$ and $k$, each individual tube $ [0,P^2_i].K^2_i$ is contained in a single $[0,P^1_j].K^1_j$ tube between successive (with respect to the flow) intersections with $T_1$.

Let $(X_2,x_2)$ be a wedge of $k_2$ circles $X^2_i$ endowed with a partial flow with period $P^2_i$ on $X^2_i$. Define $p_2:(\mX,x_\bullet)\to (X_2,x_2)$ by $p_2(T_2)=x_2$, and for any point in $y\in t.K_i^2$  $(t\in [0,P_i^2])$ by $p_2(y)=t.x_2$ in $X^2_i$, where by abuse of notation $t.x_2$ denotes the point of $X^2_i$ given by the action of $t$ in the partial flow action on the point of $X^2_i$ that is identified to $x_2.$ By construction, $p_2$ is a factor map. There is then the well--defined factor map $f_1:(X_2,x_2)\to (X_1,x_1)$ which maps  $x_2$ to $x_1$ and generally maps a point in   $p_2\left( [0,P^2_i].K^2_i \setminus T_1\right)$ to the point in $X_1$ determined by the condition $p_1=f_1\circ p_2$. This is well--defined by the condition that each individual tube $ [0,P^2_i].K^2_i$ is contained in a single $[0,P^1_j].K^1_j$ tube between successive intersections with $T_1$. Note that as  $T_2\subset K^1_1 \cap L^1_1$, $f_1$ is a proper or degenerate proper factor map. 

Assume now that we have defined the projections $p_i: \mX \to X_i$ for $i\leq n$ and corresponding proper or degenerate proper factor maps $f_i$, $i<n$ for a given $n\in \mZ^+$ satisfying the conditions of Definition \ref{flow expansiondef} and with the diameter of each set $T_i=p_1^{-1}(x_i)<1/i$ and the fibers of each $p_i$ less than $\frac{1}{i}$ in diameter. Repeat the above construction with $T_n$ playing the role of $T_1$ and replacing $\frac{1}{2}$ with $1/(n+1)$ as appropriate to define the factor map $p_{n+1}: \mX \to X_{n+1}$ and proper  or degenerate proper bonding map $f_n: X_{n+1}\to X_n$. Thus, we have recursively defined a sequence of factor maps and proper or degenerate proper bonding maps for all $n\in \mZ^+$ in such a way that there is a well-defined map $h: \mX \to  \underleftarrow{\lim}(X_i,f_i)$ given by $h(x)=(p_n(x))_{n\in \mZ^+}.$ By construction, $h$ is a continuous surjection. That $h$ is injective follows from the fact that the diameters of the fibres of the maps $p_n$ converge to $0$ as $n\to \infty.$

We then have two cases. In the first case, for each $n\in \mathbb{Z}^+$ there is a corresponding $m>n$ so that the composition $f_{m,n}$ is proper. In this case, by telescoping appropriately, we see that $\mX$ admits a flow expansion in which each bonding map is proper. Moreover, in this case the periods of circles in the factor spaces grow without bound and so $\mX$ is aperiodic.

In the second case, there is an $n\in \mathbb{Z}^+$ so that for all $m>n$, $f_{m,n}$ is degenerate proper. We know that in this case $\mX$ contains a periodic orbit by Lemma \ref{periodic} and the result follows after truncation and re-indexing.

\end{proof}

We could have constructed a flow expansion without requiring $T_2\subset K^1_1 \cap L^1_1$ and analogously throughout the construction, but there would be no guarantee that the resulting flow expansion would have bonding maps which are (degenerate) proper. However, as $\text{diam } p_n^{-1}(x_n)= \text{diam } T_n \to 0$ as $n\to \infty$ in any expansion, we must have for sufficiently large $n$ that $T_n\subset K^1_1 \cap L^1_1$ and similarly for each $K^i_1\cap L^i_1$. Thus for an aperiodic flow space, by telescoping appropriately, the sequence will consist of proper bonding maps even if the original one does not. Note that this remark applies to both flow expansions and expansions as it merely relates to the shrinking diameters of the $p_n$ fibres of the base points. 

We are now able to establish the existence of a (general, i.e., not necessarily flow) expansion of a flow space $\mX$ whenever there is a dense point.

\begin{theorem}\label{expansionexists}
Suppose the flow space $\mX$ is dense at the point $x_\bullet.$  If $\mX$ is aperiodic, then $\mX$ admits an expansion about $x_\bullet$ for which every bonding map is proper. Otherwise, $\mX$ admits an expansion about $x_\bullet$ for which $f_{n,1}$ is degenerate proper for each $n.$
\end{theorem}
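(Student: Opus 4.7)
The approach is to adapt the construction from the proof of Theorem \ref{flow expansion} to the present, weaker, setting, exploiting the flexibility that projections need only preserve the \emph{direction} of the flow and not the flow action itself. This lets us reparameterise orbit arcs proportionally in order to absorb variable return times, and in particular removes the need for a hypothesised initial factor map.

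First I would construct an initial projection $p_1\colon (\mX,x_\bullet)\to W_1 = S^1$. Using density together with the same compactness argument that opens the proof of Theorem \ref{flow expansion}, I can choose a clopen transversal $T_1$ through $x_\bullet$ inside a matchbox neighbourhood, such that the first-return function $r\colon T_1\to(0,\infty)$ is bounded above. The matchbox structure gives a uniform positive lower bound on $r$; clopen-ness of $T_1$, combined with continuity of the flow, then gives both semicontinuities of $r$, via the limit $r(y_n)\cdot y_n\to \tau\cdot y\in T_1$ with $\tau>0$ forcing $\tau\geq r(y)$ by minimality. Define $p_1$ by sending $T_1$ to the wedge point and $t\cdot y$ (for $y\in T_1$ and $t\in(0,r(y))$) to the point of $S^1$ at proportion $t/r(y)$; continuity of $r$ makes $p_1$ continuous, and it is plainly orientation-preserving and locally injective along orbits, hence a projection.

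The recursive refinement of Theorem \ref{flow expansion} now runs with `projection' substituted for `factor map' throughout. At stage $n$, refine $T_n$ to a clopen $T_{n+1}$ of diameter less than $1/(n+1)$ and, by uniform continuity of the flow on $[0,R]\cdot T_n$ together with the matchbox structure, partition $T_{n+1}$ into finitely many clopen pieces indexed by the combinatorial word in the alphabet of $X_n$'s circles spelled out by the $T_n$-crossings of an orbit arc from $T_{n+1}$ back to $T_{n+1}$ (the word is locally constant on $T_{n+1}$ because each $K^n_i$ is clopen and nearby orbits produce nearby crossing times). This yields a wedge $X_{n+1}$ of oriented circles, a projection $p_{n+1}$ defined by proportional parameterisation on each tube, and an induced positive bonding map $f_n\colon X_{n+1}\to X_n$; the choice $T_{n+1}\subset K^n_1\cap L^n_1$ (available once $\mathrm{diam}\,T_n$ is sufficiently small) makes $f_n$ proper or degenerate proper. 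That $h(x)=(p_n(x))_n$ is a homeomorphism onto $\underleftarrow{\lim}(X_i,f_i)$ follows from the shrinking-$T_n$ condition on fibre diameters.

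The case analysis finishing the proof is exactly that of Theorem \ref{flow expansion}: Lemma \ref{periodic} and the telescoping argument of the remark immediately preceding the statement of this theorem yield either globally proper bonding maps (when $\mX$ is aperiodic) or the alternative in which each $f_{n,1}$ is degenerate proper. The principal obstacle is the continuity of the first-return time on the initial transversal, together with the local constancy of the combinatorial word function on the subsequent transversals; once these are established using the matchbox lower bound on return times and the clopen structure of the $K^n_i$, the remainder of the construction is a direct translation of the flow-expansion argument to the more flexible setting of projections.
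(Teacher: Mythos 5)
Your route is genuinely different from the paper's. The paper proves this theorem in three sentences: it cites \cite{KS} and \cite{AM} to obtain a flow equivalence $h$ from $\mX$ to the suspension $S$ of a homeomorphism of a zero--dimensional space, notes that density at a point is preserved by flow equivalence and that any suspension admits a genuine factor map onto the circle, and then applies Theorem \ref{flow expansion} to $S$; composing the resulting factor maps with $h$ converts them into projections and hence an expansion of $\mX$ about $x_\bullet$. You instead rebuild the whole tower intrinsically, replacing factor maps by projections via proportional reparameterisation. That is a legitimate and more self-contained strategy, and the points you flag as the principal obstacles (continuity of the first return time on a clopen transversal, local constancy of the itinerary word) are indeed true and provable exactly as you indicate. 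You should also note, though it is minor, that you need every \emph{backward} orbit to meet $T_1$ in order for the return tubes to cover $\mX$; density at $x_\bullet$ concerns forward orbits, so a short argument (e.g.\ that $x_\bullet$ lies in the alpha-limit set of every point) is required.

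The genuine gap is in the recursion step, precisely where you depart from actual flow times. Definition \ref{expansion} requires $p_n = f_n\circ p_{n+1}$. If $p_{n+1}$ is defined by parameterising the whole return tube over a clopen piece $K^{n+1}_i$ of $T_{n+1}$ proportionally to the total return time, then two points $y,y'\in K^{n+1}_i$ share the same itinerary word through the circles of $X_n$ but in general have different profiles of $T_n$-crossing times; hence two orbit points with the \emph{same} $p_{n+1}$-image (same total proportion along their respective tubes) can lie in different $T_n$-segments, and so have \emph{different} $p_n$-images. No bonding map $f_n$ satisfying $p_n=f_n\circ p_{n+1}$ then exists, and the tower is not an expansion. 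The repair is to define $p_{n+1}$ not by a single proportional parameterisation of the return tube but as the lift of $p_n$ through the combinatorial subdivision: on the $k$-th segment between consecutive $T_n$-crossings, $p_{n+1}$ must agree with $p_n$ after identifying the $k$-th arc of the circle labelled by the word $a_{j_1}\cdots a_{j_m}$ with the circle $a_{j_k}$ of $X_n$. Equivalently one can reparameterise the flow once and for all so that return times become locally constant and actual flow times can be used throughout --- which is, in effect, exactly what the paper's appeal to \cite{KS} buys it for free.
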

\begin{proof}
We know by \cite{KS},\cite{AM} that there is a flow equivalence $h$ from $\mX$ to the suspension $S$ of a homeomorphism of a totally disconnected set. The property of being dense at a point is preserved by flow equivalence, and so $S$ is dense at $h(x_\bullet)$. As any suspension flow admits a genuine factor map onto the circle, the result follows directly from Theorem \ref{flow expansion} as via $h$ we obtain an expansion of $\mX$ from any flow expansion of $S.$

\end{proof}

Note that in many cases, such as when $\mX$ is topologically weakly mixing, there is no such $h$ that preserves the flow action, and so there is no reason to assume that $\mX$ itself admits a flow expansion about $x_\bullet$. However, when $\mX$ is minimal, it is dense at each point and thus we have

\begin{cor}
If $\mX$ is a minimal flow space, then it admits an expansion about every point.
\end{cor}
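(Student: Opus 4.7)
The plan is very short: the result should follow almost immediately from Theorem \ref{expansionexists}, once we verify that a minimal flow space is dense at every one of its points in the sense of the definition immediately preceding that theorem.

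First I would argue that in any minimal flow space $\mX$, the forward orbit of every point is dense. Let $y\in\mX$ and let $\omega(y)$ be the $\omega$-limit set, i.e.\ the set of accumulation points of the forward orbit $[0,\infty).y$. By compactness $\omega(y)$ is non-empty; it is automatically closed and $\mR$-invariant. Minimality forces $\omega(y)=\mX$, and hence the closure of $[0,\infty).y$ is all of $\mX$.

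From this, density at every point is immediate. Fix any $x_\bullet\in\mX$ and any open neighbourhood $U$ of $x_\bullet$. For every $y\in\mX$, the forward orbit $[0,\infty).y$ is dense, so it meets $U$; this is exactly the condition that $\mX$ be dense at $x_\bullet$. Since $x_\bullet$ was arbitrary, $\mX$ is dense at each of its points.

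Finally I invoke Theorem \ref{expansionexists}: as $\mX$ is dense at $x_\bullet$, it admits an expansion about $x_\bullet$. (A minimal flow space is either a single periodic orbit, in which case the conclusion is trivial via a constant sequence of copies of $S^1$, or aperiodic, in which case Theorem \ref{expansionexists} even gives an expansion with proper bonding maps.) There is no real obstacle here; the only thing one has to be careful about is distinguishing the hypothesis in the definition of \emph{dense at a point} (which concerns forward orbits of every point) from mere minimality (which a priori only gives density of two-sided orbits), and the $\omega$-limit argument above bridges that gap.
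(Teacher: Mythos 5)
Your argument is correct and is essentially the paper's own: the corollary there is derived in one line from Theorem \ref{expansionexists} together with the (unproved) remark that a minimal flow space is dense at each of its points, which is exactly the claim your $\omega$-limit set argument supplies. Your extra care in upgrading density of two-sided orbits to density of forward orbits is a genuine gap-filling detail the paper glosses over, but it does not change the route.
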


\begin{theorem}\label{gen exp to flow exp}
If $\mX$ is a flow space with expansion $(\mX,x_\bullet)\approx \eX $, then by altering the flow on $\mX$ and endowing the $X_i$ with appropriate partial flows, we can obtain a flow expansion $(\mX,x_\bullet)\approx_f\eX$ using the same bonding maps and projections.
\end{theorem}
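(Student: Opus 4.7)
The plan is to alter only the flow on $\mX$, keeping the underlying topological space, the wedges $X_n$, the projections $p_n$, and the bonding maps $f_n$ unchanged, while endowing each $X_n$ with an appropriate partial flow, so that Definition \ref{flow expansiondef} is satisfied. The construction splits into two steps: choose partial flows on the $X_n$ so that each $f_n$ becomes a factor map of wedges with partial flow, and then define a new flow on $\mX$ so that each $p_n$ becomes a factor map.

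For the first step, assign arbitrary positive periods $P_i^1$ to the circles $X_i^1$ of $X_1$. Define periods on the later $X_n$ recursively: if the symbolic representation of $f_n$ sends the symbol for the circle $X_j^{n+1}$ to the word $a_{j_1}a_{j_2}\cdots a_{j_m}$ in the alphabet of $X_n$, set $P_j^{n+1}=\sum_{k=1}^m P_{j_k}^n$. This is exactly the condition needed for $f_n$ to satisfy the factor-map criterion (the period of each circle in the domain equals the total period of the word in the codomain); iterating shows that $P_j^n$ equals the $P^1$-length of the word assigned to the symbol for $X_j^n$ by the symbolic representation of $f_{1,n}$.

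For the second step, I would use the inverse-limit formulation via the global tiling spaces $\mathcal{T}(X_n)$ of Definition \ref{global tiling space}. Each $\mathcal{T}(X_n)$ is a flow space, and the factor-map property of $f_n$, together with the period compatibility just arranged, lifts it to a flow factor map $\tilde{f}_n\colon \mathcal{T}(X_{n+1})\to \mathcal{T}(X_n)$ given symbolically by the substitution associated to $f_n$. Using the original flow on $\mX$, the maps $\widehat{p_n}\colon \mX\to \mathcal{T}(X_n)$ of Definition \ref{tilingfactordef} are continuous, and the images $\mathcal{T}(p_n)=\widehat{p_n}(\mX)$ are unions of full orbits in $\mathcal{T}(X_n)$; together with the tower relation $p_n=f_{n,n+k}\circ p_{n+k}$, these images form an inverse system under the restrictions of $\tilde{f}_n$. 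The map $\widehat{h}\colon \mX\to \underleftarrow{\lim}\mathcal{T}(p_n)$ sending $x$ to $(\widehat{p_n}(x))_n$ is continuous and injective, because post-composing each coordinate with $\bar{p_n}\colon \mathcal{T}(X_n)\to X_n$ recovers $p_n(x)$, so $\widehat{h}(x)=\widehat{h}(y)$ forces the given homeomorphism $h\colon \mX\to \eX$ to identify $x$ and $y$. Since $\mX$ is compact, $\widehat{h}$ is a homeomorphism onto its image, and surjectivity onto the full inverse limit follows from the standard fact that a closed subset of an inverse limit whose projections to every factor are surjective is the whole limit. Pulling back the natural inverse-limit flow on $\underleftarrow{\lim}\mathcal{T}(p_n)$ by $\widehat{h}$ then defines the new flow on $\mX$; by construction each $\widehat{p_n}$ intertwines it with the flow on $\mathcal{T}(p_n)$, so each $p_n=\bar{p_n}\circ \widehat{p_n}$ is a factor map, as required.

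The main technical obstacle is continuity of the symbolic coding $\widehat{p_n}$ across the fibre $p_n^{-1}(x_n)$, where the zeroth symbol of $\tau$ jumps as an orbit crosses the wedge point. This jump is precisely absorbed by the suspension identification $(\tau,P_{i(\tau_0)}^n)\sim(\sigma\tau,0)$ defining $\mathcal{T}(X_n)$, so continuity holds and the inverse-limit construction goes through cleanly. A more direct reparametrization-of-orbits argument on $\mX$ is also possible, but it requires explicit bookkeeping of the speed function at every fibre $p_n^{-1}(x_n)$, whereas the inverse-limit route packages this bookkeeping into the suspension structure automatically.
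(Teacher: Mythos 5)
Your route is genuinely different from the paper's. The paper works only with the first projection: it shows that the level sets $F_t=\bigcup_i p_1^{-1}(\rho_i(t))$ exhibit $\mX$ as the suspension of the return map to $F_0=p_1^{-1}(x_1)$, pulls back that suspension flow to $\mX$, and then transports the resulting partial flow on $X_1$ up the tower by pulling back along $f_{1,n}$, so that the commutativity $p_1=f_{1,n}\circ p_n$ forces every $p_n$ and $f_n$ to be a factor map. You instead build the whole tower of tiling spaces $\cT(p_n)$ at once and pull back the inverse-limit flow through $\widehat h$. Your injectivity and surjectivity arguments for $\widehat h$ are sound (the ``standard fact'' you invoke is correct for inverse sequences of compact Hausdorff spaces, by a diagonal compactness argument), and you correctly identify and dispose of the continuity issue at the wedge fibre. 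Your approach essentially re-proves Theorem \ref{tilingfactor} along the way, which is a modest bonus, at the cost of having to verify the commutation relations among the $\widehat{p}_n$ and the induced maps $\tilde f_n$.

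That verification is where the one concrete gap sits, and it originates in Step 1. The period recursion $P^{n+1}_j=\sum_k P^n_{j_k}$ is necessary but \emph{not} sufficient for $f_n$ to be a factor map in the sense of the paper's definition, which demands the pointwise intertwining $f_n(t.y)=t.f_n(y)$: a positive map with the correct total period can still traverse the circles of $X_n$ at non-uniform speed relative to the two partial flows. As written, your construction therefore does not secure condition (1) of Definition \ref{flow expansiondef}, and -- more damagingly for Step 2 -- the identities $\tilde f_n\circ\widehat{p}_{n+1}=\widehat{p}_n$ and $f_n\circ\bar{p}_{n+1}=\bar{p}_n\circ\tilde f_n$ that you need in order to assemble the $\cT(p_n)$ into an inverse system commuting with the tower of $\widehat{p}_n$'s can fail, because the time coordinates on the two sides need not agree. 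The repair is immediate and is exactly what the paper does: define the partial flow on $X_{n+1}$ itself (not merely its periods) as the pullback along $f_n$ of the partial flow on $X_n$ -- equivalently, pull everything back from $X_1$ along $f_{1,n}$ -- so that the period recursion becomes a consequence rather than the definition. With that single change your argument goes through.
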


\begin{proof}
Suppose $\mX$ is a flow space with expansion $(\mX,x_\bullet)\approx\eX$. We judiciously construct a homeomorphism from $\mX$ to a suspension and pull back the suspension flow to obtain the result.

Consider then the projection $p_1\colon \mX \to X_1$, where $X_1$ is formed from the $k_1$ circles $X^1_i$. Now parameterise each $X^1_i$ via an orientation preserving covering map $\rho_i \colon \mR \to X^1_i$ for which $\rho_i ( \mZ) = x_1$ for each $i.$ For each $t\in [0,1),$ let $F_t = \cup_i \, p_1^{-1}\left(\rho_i(t)\right),$ where $F_0$ is the same as $p_1^{-1}(x_1).$ By Lemma \ref{projectionfacts} every flow orbit meets $F_0$ and $F_0$ is totally disconnected.  Let $r\colon F_0 \to F_0$ be the return homeomorphism of the flow to $F_0.$ For a given $x\in F_t,$ let $\cR (x)$ be the first return of $x$ to $F_0$ under the reverse flow: $\cR\colon \cup_{t\in [0,1)} F_t =\mX \to F_0$, is given by $\cR(x) = -t_x.\,x,$ where $t_x = \min \{t\geq 0 \colon -t.\,x \in F_0\}. $ For each $t\in [0,1)$, the restriction $\cR|F_t\to F_0$  is a homeomorphism and is the identity for $t=0$. Consider the suspension $S$ of $r$,  which we regard as the quotient of $F_0 \times [0,1].$ Now we define the map $h \colon \mX \to S$, where for $x\in F_t$, $h$ send $x \mapsto (\cR(x), t) \in F_0 \times [0,1)$. Observe that each $x$ is in precisely one $F_t$ and  $h$ maps $F_t$ homeomorphically onto $F_0 \times \{t\}$. Moreover, the identification of $F_0 \times \{t\}$ in $S$ matches identification of what would be $F_1$ in $\mX$ with $F_0$ and so $h$ is a homeomorphism. Now, endow $\mX$ with the flow $\phi$ given by the pullback via $h$ of the suspension flow on $S.$ By construction, the projection $p_1$ is then a factor map,  where $X_1$ is given the partial flow for which each $X^1_i$ is given the flow of period $1$ induced by $\rho_i$. Then, we pull back the flow on $X_1 $ via $f_{n, }$ to obtain partial flows on $X_n$. By the commutativity of the original maps, the projections $p_n$ then become factor maps, as are the bonding maps. Thus, relative to $\phi$, we now have  $(\mX,x_\bullet)\approx_f\eX $ as desired.

\end{proof}

\subsection{Symbolic sequences which give flow spaces.}

We now determine which inverse sequences in the wedge category $\cW$ correspond to a flow space, and indeed which of these are minimal. It will be convenient for subsequent investigation to do so on the symbolic level, which will require us to introduce some further definitions.

\begin{defn}
Given finite alphabets $\mathcal{A}_i$ ($i=1,2,3$) and maps $s_i: \mathcal{A}_{i+1}\to \mathcal{A}_i^* $ %for $i=1,2$ 
(where $\mathcal{A}_i^*$ is the set of finite words in symbols from $\mathcal{A}_i$), the \emph{composition} $s_i\circ s_{i+1}$ is the map $\mathcal{A}_{i+2}\to \mathcal{A}_i^* $ assigning to $a\in \mathcal{A}_{3} $ the concatenated word $s_i(b_1)\cdots s_i(b_k)$, where $s_{i+1}(a)=b_1\cdots b_k$.
\end{defn}
The composition of multiple maps is defined similarly using further concatenation.

\begin{defn}
We say that a sequence $\cS$ of finite alphabets $\mathcal{A}_i$ and maps $s_i: \mathcal{A}_{i+1}\to \mathcal{A}_i^* $ for $i\in \mathbb{Z}^+,$ \emph{represents a flow space} if there is a flow space $\mX$ with an expansion $(\mX,x_\bullet)\approx\eX $ for which  $s_i$ is a symbolic representation of  $f_i$ for each $i$.
\end{defn}
Observe that if $\cS$ represents a flow space even with a flow expansion, the flow space is not uniquely determined up to conjugacy  as we can freely choose the periods in say $X_1$, and this can change the conjugacy class, even after rescaling \cite{CS}.  However, as we now show, the flow equivalence class and the isomorphism class of the inverse sequence in $\overline{\cW}$ of any expansion is the same for any flow space $\cS$ represents.

\begin{lemma}\label{seq rep equiv}
If the symbolic sequence $\cS$ represents the flow spaces $\mX$ and $\mY$ with corresponding expansions $(\mX,x_\bullet)\approx\eX$ and $(\mY, y_\bullet )\approx\eY $, then there is a flow equivalence $h:(\mX,x_\bullet)\to (\mY,y_\bullet)$ and an isomorphism of the inverse sequences $(X_i,f_i)$ and $(Y_i,g_i)$ in $\overline{\cW}$.
\end{lemma}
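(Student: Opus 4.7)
My approach rests on a single reparameterisation principle: if two positive maps $p,q\colon W_m\to W_n$ share a symbolic representation $s$, then $q=p\circ r$ for some orientation-preserving self-homeomorphism $r$ of $W_m$ fixing the wedge and each individual circle. Indeed, parametrising the $i$-th circle of $W_m$ as $[0,1]/(0\sim 1)$, both $p$ and $q$ partition $[0,1]$ by their passes through the wedge of $W_n$ into exactly $|s(a_i)|$ subintervals, each mapped homeomorphically onto the target circle specified by the corresponding letter; composing $q$-pieces with the inverses of the matching $p$-pieces assembles circle-by-circle into the required $r$.

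Using $s_i$ to identify the $\cA_i$-labellings of the circles of $X_i$ and of $Y_i$, I would choose orientation-preserving homeomorphisms $\alpha_i\colon X_i\to Y_i$ sending wedge to wedge and sending the $a$-labelled circle of $X_i$ onto its namesake in $Y_i$, and then inductively construct the vertical maps of the isomorphism. Set $d_1:=\alpha_1$; given $d_i\colon X_i\to Y_i$, the compositions $d_i\circ f_i$ and $g_i\circ\alpha_{i+1}$ are positive maps $X_{i+1}\to Y_i$ both with symbolic representation $s_i$, so the reparameterisation principle supplies a label-preserving, orientation-preserving self-homeomorphism $r_i$ of $X_{i+1}$ with $g_i\circ\alpha_{i+1}\circ r_i=d_i\circ f_i$. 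Setting $d_{i+1}:=\alpha_{i+1}\circ r_i$ yields another label-preserving, orientation-preserving homeomorphism and gives $d_i\circ f_i=g_i\circ d_{i+1}$ on the nose. The resulting commutative ladder $(d_i)$ of homeomorphisms is the sought isomorphism of $(X_i,f_i)$ with $(Y_i,g_i)$ in $\overline{\cW}$.

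The flow equivalence then falls out by passing to limits. The $d_i$ assemble into a homeomorphism $\underleftarrow{\lim}(X_i,f_i)\to\underleftarrow{\lim}(Y_i,g_i)$ sending the wedge-point sequence to the wedge-point sequence; composing with the homeomorphisms supplied by the two expansions produces $h\colon(\mX,x_\bullet)\to(\mY,y_\bullet)$. To see $h$ preserves flow direction, take $x\in\mX$ and $\epsilon>0$ small: the projection $p_n$ sends the forward arc $\{t.x:t\in[0,\epsilon)\}$ to a positively-oriented arc in $X_n$, $d_n$ carries that to a positively-oriented arc in $Y_n$, and since $q_n$ is itself an orientation-preserving projection this forces $h(t.x)$ to lie on the forward orbit of $h(x)$ for small $t$.

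The most delicate point is the inductive step itself: one must verify that the reparameterisation freedom available at level $i+1$ exactly solves the square $d_i\circ f_i=g_i\circ d_{i+1}$ once $d_i$ is fixed, and this is where the positivity of the bonding maps plays an essential role. Because each $r_i$ is a homeomorphism absorbed wholly into $d_{i+1}$, no telescoping or re-indexing is needed, and the isomorphism obtained is in fact level-wise rather than merely interwoven.
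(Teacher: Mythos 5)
Your proposal is correct and follows essentially the same route as the paper: both start from a circle-matching positive homeomorphism $X_1\to Y_1$ and inductively fill in a level-wise commutative ladder of positive homeomorphisms $d_i\colon X_i\to Y_i$ (your factorisation $d_{i+1}=\alpha_{i+1}\circ r_i$ is exactly the map the paper describes as the ``uniquely determined positive homeomorphism'' solving the square), then pass to limits to obtain the flow equivalence. Your explicit reparameterisation lemma merely spells out the existence step that the paper asserts without proof.
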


\begin{proof}

Suppose $\cS$ represents the flow spaces $\mX$ and $\mY$ with corresponding expansions $(\mX,x_\bullet)\approx\eX $ and $(\mY, y_\bullet )\approx\eY .$ Then let $h_1:X_1 \to Y_1$ be a homeomorphism that is a positive map in $\cW$ which matches circles corresponding to the same symbol. As $f_1$ and $g_1$ have the same symbolic representation, we can see that there is a uniquely determined positive homeomorphism $h_2: X_2 \to Y_2$ satisfying $h_1 \circ f_1 = g_1 \circ h_2,$ and similarly with homeomorphisms $h_i$ making the following diagramm commutative: 

\[
\begin{tikzcd}[font=\large] 
X_{1}  \arrow[swap]{d}{h_1} && X_{2}\arrow[swap]{d}{h_2}\arrow{ll}{f_{1}}&&X_{3}\arrow{ll}{f_{2}}\arrow[swap]{d}{h_3} &&  \cdots \arrow{ll}  \\ Y_{1} && Y_{2} \arrow{ll}{g_{1}}&&Y_{3} \arrow{ll}\arrow{ll}{g_{2}} &&\cdots \arrow{ll}   
\end{tikzcd}
\]
This clearly gives a flow equivalence $(\mX,x_\bullet)\to (\mY,y_\bullet)$ and yields an isomorphism of $(X_i,f_i)$ and $(Y_i,g_i)$ in $\overline{\cW}$, but these are not canonical as there is choice in $h_1$.

\end{proof}
To characterise those symbolic sequences that represent a flow space, we require the following definition.

\begin{defn}
A sequence $\cS$ of finite alphabets $\mathcal{A}_i$ and maps $s_i: \mathcal{A}_{i+1}\to \mathcal{A}_i^* $ for $i\in \mathbb{Z}^+,$ is \emph{proper} if for each $n\in \mathbb{Z}^+$ there is an $m>n$ so that the composition $s_n \circ \cdots \circ s_{m-1}$ is proper.
\end{defn}

\begin{theorem}\label{symbolicexpansion}
A sequence $\cS$ of finite alphabets $\mathcal{A}_i$ and maps $s_i: \mathcal{A}_{i+1}\to \mathcal{A}_i^* $ for $i\in \mathbb{Z}^+$ represents an aperiodic flow space if and only if  $\cS$ is proper.
\end{theorem}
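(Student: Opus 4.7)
The proof divides into the necessity and sufficiency directions.

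(Necessity) Suppose $\cS$ represents an aperiodic flow space $\mX$, so that there is an expansion $(\mX, x_\bullet) \approx \underleftarrow{\lim}(X_n, f_n, p_n)$ with symbolic representation $\cS$. The plan is to invoke Theorem \ref{gen exp to flow exp} to upgrade this to a flow expansion, preserving both bonding maps and projections; $\cS$ is therefore unchanged, and aperiodicity of $\mX$ is preserved since the path components (the orbits) are flow-intrinsic. In the flow expansion one clopenly partitions each fibre $T_n := p_n^{-1}(x_n)$ as $T_n = \bigsqcup_i K^n_i = \bigsqcup_j L^n_j$, where $K^n_i$ collects the points whose forward orbit first enters circle $i$ of $X_n$ and $L^n_j$ those whose backward first-return comes from circle $j$. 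A short flow-theoretic check shows that $f_{m,n}$ is proper or degenerate proper precisely when $T_m$ lies inside a single clopen piece $K^n_i \cap L^n_j$. Since $x_\bullet \in T_m$ sits in a unique such piece and $\text{diam}(T_m) \to 0$, this inclusion is automatic for all large $m$. Aperiodicity combined with the contrapositive of Lemma \ref{periodic} then yields, for each $n$, some $m > n$ with $f_{m,n}$ not degenerate proper, hence proper, establishing that $\cS$ is proper.

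(Sufficiency) Suppose $\cS$ is proper. The plan is to construct an aperiodic flow space $\mX$ together with an expansion whose symbolic representation is $\cS$. Realise each $\mathcal{A}_n$ as the label set of an oriented wedge $X_n$ of circles, realise each $s_n$ by a positive map $f_n: X_{n+1} \to X_n$, and form $\mX := \underleftarrow{\lim}(X_n, f_n)$ as a topological inverse limit. Assign partial flows inductively: pick any periods on the circles of $X_1$, and on each circle $b$ of $X_{n+1}$ take the period to be the sum of the periods of the circles of $X_n$ named by $s_n(b)$. Each $f_n$ then becomes a factor map of partial flows, and these assemble into a continuous $\mR$-action on $\mX$. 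At any $x \in \mX$ whose projections $p_n(x)$ eventually avoid the wedge points, local injectivity of positive maps yields a matchbox chart immediately. At the distinguished basepoint $x_\bullet = (x_n)_n$, properness is decisive: it guarantees that for each $n$ the fibres $T_m := p_m^{-1}(x_m)$ lie inside a single clopen $K^n_i \cap L^n_j$ for all large $m$, so the flow through $x_\bullet$ has unique forward and backward asymptotic branches at each level, producing a one-dimensional matchbox chart at $x_\bullet$. Aperiodicity then follows from the properness-induced growth: each proper composition at least doubles word lengths, so the periods in $X_m$ tend to infinity, precluding any periodic orbit.

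The main obstacle is the sufficiency direction, specifically verifying that $\mX$ is genuinely a flow space at $x_\bullet$: a priori the inverse limit of wedges of circles could inherit a nontrivial branching singularity at the limit wedge point, destroying the matchbox structure. Properness is exactly the combinatorial condition that causes the finite-level wedge-point singularities to merge consistently in the limit into a single one-dimensional chart, and carefully matching the forward and backward approach-chains at $x_\bullet$ is the delicate step.
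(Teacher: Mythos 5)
Your proposal is correct and follows essentially the same route as the paper: necessity via the shrinking diameters of the fibres $T_m$ forcing them into a single clopen piece $K^n_i\cap L^n_j$ (so $f_{m,n}$ is proper or degenerate proper) and then Lemma \ref{periodic} to exclude the persistently degenerate case, and sufficiency by realising $\cS$ as an inverse limit of wedges with inductively assigned partial flows and checking the matchbox structure, with properness doing exactly the work of collapsing the wedge-point branching at $x_\bullet$ (the paper's ``spider'' neighbourhood argument). The only cosmetic difference is that the paper establishes the matchbox-manifold structure and orientability first and then invokes \cite{AHO} to obtain the flow, rather than asserting the $\mR$-action up front.
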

\begin{proof}
By the remarks following the proof of Theorem \ref{flow expansion}, we know that $\cS$ can represent an aperiodic flow space only if for each $n\in \mathbb{Z}^+$ there is an $m>n$ so that the composition $s_n \circ \cdots \circ s_{m-1}$ is proper. 

Suppose then that we have a proper sequence $\cS$ with $k_i$ denoting the number of symbols in $\mathcal{A}_i$.  By telescoping if necessary, we assume that each $s_i$ is proper for ease of notation. For each $i$, we let $X_i$ be a wedge of $k_i$ circles. We endow $X_1$ with the partial flow for which each period $P^1_j$ is $1.$ Then endow $X_2$ with the uniquely determined partial flow so that there is a continuous factor map $f_1\colon (X_2,x_2) \to (X_1,x_1)$ for which $s_1$ is a symbolic representation. Assuming we have endowed such a flow on $X_i$, recursively, endow $X_{i+1}$ with the uniquely determined partial flow so that there is a continuous factor map $f_i\colon (X_{i+1},x_{i+1}) \to (X_i,x_{i})$ for which $s_i$ is a symbolic representation. We now have an inverse system  and corresponding limit space $\mX=\underleftarrow{\lim}(X_i,f_i)$ with base point $x_\bullet = (x_n)_{n\in \mZ^+}$. As a first step to show that $\mX$ is a flow space, we show that it is a matchbox manifold.

Let $i\in \mathbb{Z}^+$ and let $z$ be any point of $X_i$. We shall show that $p_i^{-1}(z)$ is a totally disconnected subspace of $\mX$. For any $n>i$, $f_{i,n}$ is a factor map and so $f^{-1}_{i,n}(z)$ is a finite set, say $\{z_1,\dots, z_\ell\}$. Thus $\{p_n^{-1}(z_1),\dots p_n^{-1}(z_\ell)\}$ partitions $p_i^{-1}(z)$ into finitely many closed and hence clopen subsets. As each of these clopen sets has diameter less than $\frac{1}{2^n}$, we see that $p_i^{-1}(z)$ has a basis of clopen sets and is thus totally disconnected. 

We now show that each point of $\mX$ has a neighbourhood that is homeomorphic to $I\times Z$ for some closed interval $I\subset \mathbb{R}$ and some totally disconnected space $Z$. First consider a $x\in \mX$ that is not the base point $x_\bullet$. Then for some $i$, $z=p_i(x)\neq x_i.$ By the above, the space $Z:=p_i^{-1}(z)$ is totally disconnected. As $z\neq x_i$, we may choose a closed interval $I\subset \mathbb{R}$ containing $0$ in its interior such that $I.z$ does not contain the wedge point $x_i$ and is thus completely contained within a single circle used to form $X_i$. Thus, for each $n>i$, $f^{-1}_{n,i}\left(I.z\right)$ is partitioned by the intervals $\{I.z_1,\dots,I.z_\ell\}$ that do not contain the base point $x_n,$ where $f^{-1}_{i,n}(z)=\{z_1,\dots, z_\ell\}$. Thus, for  any $n>i$ and any any $w_n\in f^{-1}_{i,n}(z), $ the function $I\to X_n$ given by $t\mapsto  t.w_n$ is well--defined, injective and continuous.  By construction, for $w\in Z$ and for any $n>i,$ $p_n(w)\in f^{-1}_{i,n}(z).$ Consider then the map
$h: I\times Z \to p_i^{-1}\left( I.z \right) \hookrightarrow \mX$ given by  $h(t,w)=(r_n)_{n\in \mathbb{Z}}$, where for $n\geq i$, $r_n = t.\,p_n(w)$ and for $n<i,$
$r_n=f_{n,i}(t.z)$. Then $h$ is an injective and continuous map from a compact space and is thus a homeomorphism onto its image $p_i^{-1}\left( I.z \right)$. Hence, $ p_i^{-1}\left( I.z \right)$ is a neighbourhood of $x$ which is homeomorphic to $I\times Z$ as desired. 

Now consider the base point $x_\bullet$. As each $x_n$ is the base point of $X_n$, it typically belongs to more than one circle of $X_n$, and so $t.x_n$ is not generally defined as an element in $X_n$. As $P^1_j =1$ for each $j$ by construction, within each circle $X^n_i$, $(-\frac{1}{2},\frac{1}{2}).\,x_n$ does not completely cover $X^n_i$. Let $S_n$ be the ``spider" neighbourhood in $X_n$ of the base point $x_n$ formed by taking the union of intervals $(-\frac{1}{2},\frac{1}{2}).\,x_n$ in each circle used to form $X_n$. Similarly, for $t\in (-\frac{1}{2},\frac{1}{2})$, $t.\,x_n$ denotes the union of all such points in the individual circles.  As each $f_j$ is proper, for any $i<n$ the image $f_{i,n}(S_n)$ is an interval in $X_i$ and the map $(-\frac{1}{2},\frac{1}{2}) \to X_i$ given by $t\mapsto f_{i,n}(t.\, xX_n)$ is well-defined, continuous and injective. Letting $Z:= p_1^{-1}(x_1)$, consider the function $h: [-\frac{1}{4}, \frac{1}{4}]\times Z \to p_1^{-1}\left( [-\frac{1}{4}, \frac{1}{4}]. \, x_1 \right) \hookrightarrow \mX$ defined according to $2$ cases. Firstly, if $x_\bullet \neq w \in Z$ with say $p_j(w)\neq x_j$ then $h(t,w)=(r_n)_{n\in \mathbb{Z}}$, where for $n\geq j$, $r_n = t.\,p_n(w)$ and for $n<j,$ 
$r_n=f_{n,j}(t.\,p_j(w))$. Secondly, $h\left( t, x_\bullet \right)=(r_n)_{n\in \mathbb{Z}}$, where $r_n = f_n(t.\,x_{n+1})$. Then as before,  $h$ is an injective and continuous map from a compact space and is thus a homeomorphism onto its image $p_i^{-1}\left( I.z \right)$. Hence, $ p_i^{-1}\left( I.z \right)$ is a neighbourhood of $x$ which is homeomorphic to $I\times Z$ as desired.

Now that we have established that $\mX$ is a matchbox manifold, by the results of \cite{AHO}, $\mX$ is a flow space if it is orientable. This is, however, immediate as there is an orientation provided by the partial flow in each of the matchbox neighbourhoods considered above, and these orientations match on the overlap as they are both determined by partial flows related by a factor map. In fact, a flow on $\mX$ can be obtained from extending the given parameterisations of the neighbourhoods. 

\end{proof}
Observe that even without the assumption of properness, all points other than the base point are contained in a matchbox neighbourhood. If the properness condition is not met, then the path component of the base point will not be the continuous image of $\mathbb{R}$ and will have a number of rays emanating from the base point determined by the sets $\cap_{i>n}f_{n,i}(S_i),$ $n\in \mathbb{Z}^+$.

We know that any flow space admitting an expansion contains a single minimal set but may not be itself minimal. We now provide a criterion for minimality that is inspired by the analogous notion for substitutions, which has already been generalised quite broadly in \cite{FS} to other settings.

\begin{defn}
A positive map $f: (X_2,x_2) \to (X_1,x_1)$ of wedges of circles is \emph{primitive} if in any symbolic representation $f_s: \mathcal{A}_2 \to \mathcal{A}_1^*$ for every symbol $a\in \mathcal{A}_2$, $f_s(a)$ contains all the symbols of $\mathcal{A}_1.$ Similarly, we define a symbolic map satisfying this property as \emph{primitive}.
\end{defn}

\begin{defn}
A sequence $\mathcal{S}$ of finite alphabets $\mathcal{A}_i$ and maps $s_i: \mathcal{A}_{i+1}\to \mathcal{A}_i^* $ for $i\in \mathbb{Z}^+,$ is \emph{primitive} if and only if for each $n\in \mathbb{Z}^+$ there is an $m>n$ so that the composition $s_n \circ \cdots \circ s_{m-1}$ is primitive.     
\end{defn}

\begin{theorem}\label{minimal}
Let $\mX$ be an aperiodic flow space represented by the proper sequence $\mathcal{S}$ of finite alphabets $\mathcal{A}_i$ and maps $s_i: \mathcal{A}_{i+1}\to \mathcal{A}_i^* $ for $i\in \mathbb{Z}^+$ as in Theorem \ref{symbolicexpansion}. The flow on $\mX$ is minimal if and only if $\mathcal{S}$ is primitive. 
\end{theorem}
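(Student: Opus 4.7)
The plan is to analyze both directions via the sequence of circles traversed by the forward orbit at each level of an expansion $(\mX,x_\bullet)\approx \underleftarrow{\lim}(X_i,f_i,p_i)$ realising $\cS$. This symbolic sequence is well-defined because, by Lemma \ref{projectionfacts}, the forward orbit of any $x\in \mX$ meets $p_n^{-1}(x_n)$ along an unbounded sequence of times and, between consecutive such times, $p_n$ traverses exactly one circle of $X_n$ once, by orientation preservation and local injectivity along orbits.

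For the easier direction ($\cS$ primitive $\Rightarrow$ $\mX$ minimal), I would fix $x\in \mX$ and $n\in \mZ^+$, choose $m>n$ with $f_{n,m}$ primitive, and observe that the forward orbit of $x$ eventually traverses some circle $X_b^m$ at level $m$. Over this time the level-$n$ symbol sequence of the orbit contains the word $f_{n,m}(b)$, which by primitivity uses every letter of $\cA_n$. Since every entered circle is traversed in full, the $p_n$-image of the forward orbit fills $X_n$; as sets of the form $p_n^{-1}(V)$ generate the topology of $\mX$, the orbit is dense, and $\mX$ is minimal.

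For the converse, suppose $\mX$ is minimal but $\cS$ is not primitive, so there is an $n_0$ such that every $f_{n_0,m}$ with $m>n_0$ is non-primitive. Set $L_m(a):=\{c\in \cA_m : a\notin f_{n_0,m}(c)\}$. Writing $f_{n_0,m'}=f_{n_0,m}\circ f_{m,m'}$ and using that positive maps produce non-empty words, one checks that $L_m(a)=\emptyset$ forces $L_{m'}(a)=\emptyset$ for all $m'>m$. Hence if for every $a$ some $L_{m_a}(a)$ were empty, then $M=\max_a m_a$ would make $f_{n_0,M}$ primitive, contrary to assumption, so there is a fixed $a\in \cA_{n_0}$ with $L_m(a)\neq\emptyset$ for all $m>n_0$. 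Let $W_m:=\bigcup_{c\in L_m(a)}X_c^m\subset X_m$; the defining property of $L_m(a)$ gives $f_{m-1,m}(W_m)\subset W_{m-1}$, so $\widehat{Y}:=\bigcap_{m>n_0}p_m^{-1}(W_m)$ is a nonempty closed subset of $\mX$ (the sub-inverse-system of compacta $(W_m)$ has nonempty limit), and it is proper because $p_{n_0}(\widehat{Y})\subset f_{n_0,m}(W_m)$ avoids the interior of the circle $X_a^{n_0}$ for every $m$.

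The main technical obstacle is to show that $\widehat{Y}$ is flow-invariant, since a priori an orbit in $\widehat{Y}$ could, upon reaching a wedge point $x_m$, leave a circle in $W_m$ and enter one outside. The key is that circle transitions at any level $m$ are controlled by the trajectory at all sufficiently high levels: the sets $T_M:=p_M^{-1}(x_M)$ are nested decreasing with diameters below $1/2^M$ and $\bigcap_M T_M=\{x_\bullet\}$, so for any $y\neq x_\bullet$ and $t>0$ with $s.y\neq x_\bullet$ on $[0,t]$, compactness of $[0,t]$ forces $\{s\in[0,t]:s.y\in T_M\}=\emptyset$ for all sufficiently large $M$. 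At such an $M$ the level-$M$ projection of the orbit stays on a single circle, which lies in $W_M$ since $y\in\widehat{Y}$, and then $p_m(t.y)=f_{m,M}(p_M(t.y))\in f_{m,M}(W_M)\subset W_m$ for every relevant $m$; the remaining case, where the orbit passes through $x_\bullet$ (at most once, by aperiodicity), is handled by continuity of the flow together with closedness of $\widehat{Y}$. The resulting nonempty, closed, proper, flow-invariant set $\widehat{Y}$ contradicts minimality of $\mX$, completing the proof.
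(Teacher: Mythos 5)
Your forward direction (primitive $\Rightarrow$ minimal) is essentially the paper's own argument: both pass to a level $m$ where the composite is primitive, note that the $p_m$-image of the orbit fully traverses some circle $X^m_b$, and push down via $f_{n,m}$. Your converse, however, takes a genuinely different route: the paper argues directly from minimality (relatively dense return times to $p_n^{-1}(I)$, plus properness forcing the periods of the level-$m$ circles to exceed the return-time bound $L$, so that every level-$m$ circle's traversal must contain a visit to $I$), whereas you argue by contraposition, building from a non-primitive tail a nonempty, closed, proper, flow-invariant set $\widehat Y=\bigcap_m p_m^{-1}(W_m)$. The combinatorics of the sets $L_m(a)$, the inclusion $f_{m-1}(W_m)\subset W_{m-1}$, and the properness of $\widehat Y$ are all correct, and the invariance argument works cleanly for any orbit segment avoiding $x_\bullet$.

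The gap is in the remaining case, where the orbit crosses the base point. Continuity of the flow and closedness of $\widehat Y$ only let you pass \emph{into} $x_\bullet$ (from $[0,s_0).y\subset\widehat Y$ conclude $s_0.y=x_\bullet\in\widehat Y$, which is anyway automatic since $x_M\in W_M$ for all $M$); they give you nothing about times $t>s_0$. The real issue is which circle of $X_M$ the forward orbit of $x_\bullet$ enters first: since $p_M(x_\bullet)=x_M$ lies on \emph{every} circle, membership of $x_\bullet$ in $\widehat Y$ carries no information, and a priori the orbit could exit into a circle outside $W_M$, after which your single-circle argument no longer applies. This can be repaired, but it needs an extra idea that your proposal does not supply: either (i) use properness of $\cS$ to note that for large $k$ all words $f_{M,M+k}(c)$ share a common first letter $e_M$, which, being the first letter of $f_{M,M+k}(c)$ for some $c\in L_{M+k}(a)$, lies in $L_M(a)$, so the forward orbit of $x_\bullet$ does enter $W_M$ at every level; or (ii) sidestep invariance of $\widehat Y$ altogether by noting that $p_M^{-1}(W_M)$ contains orbit arcs of length tending to infinity (the tubes over full circles of $W_M$, whose periods grow by properness), extracting by compactness a point $z$ with $\mR.z\subset\widehat Y$, and contradicting minimality via $\overline{\mR.z}\subset\widehat Y\subsetneq\mX$. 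As written, the phrase ``handled by continuity of the flow together with closedness of $\widehat Y$'' does not close this case.
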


\begin{proof}
First observe that a flow is minimal if and only if every flow orbit is dense, and so the minimality of the flow on $\mX$ is independent of the flow on $\mX$ constructed from $\mathcal{S}$.  Without loss of generality by Theorem \ref{gen exp to flow exp}, we assume that the expansion corresponding to $\cS$ is a flow expansion. 

Now suppose that $\mathcal{S}$ is primitive and let $x$ be any point of $\mX.$ Recall that $\mX$ has a basis of open sets of the form  $p_n^{-1}(U),$ where $U$ is a non--empty basic open set in $X_n.$ Thus, to establish minimality we must show that the orbit of $x$ intersects any neighbourhood $p_n^{-1}(U)$ of such form. By the primitivity hypothesis, there is an $m>n$ so that $s_n \circ \cdots \circ s_{m-1}$ is primitive. As $p_m$ is a projection, the $p_m$ image of the orbit of $x$ will completely cover (at least) one of the circles $X^m_j$ used to form $X_m.$ By primitivity, we see that $f_{n,m}\left( X^m_j \right) $ contains all of $X_n$. Hence, the orbit of $x$ intersects  $p_n^{-1}(U)$ as required.

Suppose then that $\mX$ is minimal. Let $n\in \mathbb{Z}^+$ and for a given $a \in \mathcal{A}_n,$ let $I$ be an open interval in $X_n\setminus \{x_n\}$, the circle of  corresponding to $a.$ Let $x$ be any point of $\mX$ with $p_n(x)\in I$. As $\mX$ is minimal, the set of  return times of $x$ to $ p_n^{-1}(I)$ is relatively dense in $\mathbb{R}$, say every interval in $\mathbb{R}$ of length $L$ contains at least one return time to $ p_n^{-1}(I)$. As $\mathcal{S}$ is proper, the periods of the circles forming the $X_i$ grow without bound. Thus, there is an $m>n$ for which all the circles used to form $X_m$ have period greater than $L$. Now let $b$ be any symbol in  $\mathcal{A}_m$, with corresponding circle $X^m_j$. Now let $J=X^m_j\setminus \{x_m\}\subset X_m.$ As $p^{-1}_m(J)$ is open in $\mX$ and the orbit of $x$ is dense in $\mX,$ some point in the orbit of $x$ is contained in  $p^{-1}_m(J)$, and so the $p_m$ image of the orbit of $x$ covers $X^m_j$. As the period of the flow on $X^m_j$ is greater than $L$, there is a point $z\in p^{-1}_m(J)\cap p^{-1}_n(I)$ in the orbit of $x.$ Hence, $f_{n,m}(p_m(z))\in I$ and so $s_n \circ \cdots \circ s_{m-1}(b)$ contains the symbol $a.$ Thus, $\mathcal{S}$ is primitive as required.

\end{proof}

\subsection{Examples of expansions}\label{expansionexamp}

When a flow space is the tiling space given by the suspension of a minimal, aperiodic subshift, we can construct proper expansions very directly.  Let  $\mX$ then be the suspension of $(S,s)$, a minimal, aperiodic subshift of $(\cA_1^\mZ,s)$ for some finite alphabet $\cA_1$. We identify the base of the suspension $S\times \{0\}\subset S\times [0,1]/\!\!\sim$ with $S$ for simplicity of notation. Consider then any $a_1$ and $b_1$ in $\cA_1$ which occur as adjacent symbols in some element of $S$: for some $(x_i)_{i\in \mZ}\in S$ and for some $n\in \mZ$, $x_n=a_1$ and $x_{n+1}=b_1$. By the minimality of $S,$ the word $a_1b_1$ occurs infinitely often and with bounded gaps along any orbit of $S.$ Thus, there are only a finite number of possible words that lie between adjacent occurrences of $a_1b_1$. Let $\cA_2$ be the finite set of the possible such words, which we shall call `2-tiles', where we consider each such word as beginning with a $b_1$ and ending with an $a_1$. We can assume $\cA_2$ is of cardinality at least two by the aperiodicity.  Suppose $a_2b_2$ is a junction of 2-tiles that occurs in some element of $S.$ Again by minimality, such junctions occur  infinitely often, with bounded gaps, and so with a finite number of possible words that lie between adjacent occurrences to yield a finite alphabet $\cA_3$ of 3-tiles.  Repeating this recursively gives a sequence of finite alphabets $\cA_n$ encoding an infinite hierarchy of $n$-tiles. Let  $X_n$ be a wedge of circles in one to one correspondence with the $n$-tiles forming $\cA_n$. The bonding maps $X_n\to X_{n-1}$ are given by the $(n-1)$-tile `letters' making up each $n$-tile with intervals mapping linearly. The bonding maps are proper by construction. The projections $p_n: \mX \to X_n$ are defined naturally by the position of the point in the corresponding $n$-tile. Properness and the ever-increasing length of the words forming the $n$-tiles ensures that this does indeed form an expansion of $\mX$.

Note that the base point for the resulting expansion can be determined by the construction: it will be the point $x_\bullet=(x_i)_{i\in \mZ}$ in the base $S$ with initial  entries of $(x_i)_{i\geq 0}$ given by $b_n$ and with the terminal entries of  $(x_i)_{i< 0}$ given by $a_n$ for each $n$. As the lengths of the words forming $a_n$ and $b_n$ increase without bound, this completely determines $x_\bullet$. Turning this around,  we can arrange an expansion that is about any particular desired base point $x_\bullet=(x_i)_{i\in \mZ}\in S$ as follows.  Let $l_n$ be the word $x_{-n}x_{-n+1} \cdots x_{-1}$  and let $r_n$ be the word $x_0\cdots x_n$. We now perform the construction above declaring the $n$-tiles to be the finite words in $S$ that lie between occurrences of $l_nr_n$. 

When there is a clear way to form the hierarchy of $n$-tiles, the construction can be made more explicit, as we now see in the case of substitutions.

\begin{defn}\label{substsubshiftdefn}
Given a primitive substitution $\sigma \colon \cA \to \cA^*$ of a finite alphabet $\cA,$ the \emph{substitution subshift of $\sigma$}, denoted $S(\sigma)$, is the subshift of $(\cA^\mZ,s)$ consisting of those $x=(x_n)_{n\in \mZ}$ which satisfy the condition that any finite word $x_i\cdots x_{i+k}$ occurring in $x$ is also a subword of $\sigma^n(a)$ for some $a\in \cA$ and some $n\in \mZ^+$.
\end{defn}
It is well known that $(S(\sigma),s)$ is minimal when $\sigma$ is a primitive substitution, see, e.g., \cite{BG}.
Suppose then $\sigma$ is a primitive substitution on the finite alphabet $\cA_1$. Then after sufficient iteration (which we suppress for simplicity of notation), we can assume there are  $a,b \in \cA_1$ such that $\sigma(a)$ ends with an $a$, and $\sigma(b)$ starts with a $b$ and $ab$ occurs as a word in $S(\sigma)$; see, e.g., \cite[Lemma 4.3]{BG}. Now repeat the above general construction by declaring  the 2-tiles to be the words between occurrences of $ab$, and so in particular, any 2-tile will start with a $b$ and end with an $a$. In general, the $n$-tiles are the words in the $(n-1)$-tiles between occurrences of $\sigma^{n-1}(a)\sigma^{n-1}(b)$. The base point of the expansion is then $\sigma^\infty(a) \,\cdot \, \sigma^\infty(b)\in S(\sigma)$, where $\cdot$ delimits the terms $x_i$ with negative index from those with non-negative indices and $\sigma^\infty$ applied to an element of $\cA_1$ is determined by iteratively applying $\sigma.$  We now illustrate the technique with a particular example; it should be compared with the rewriting techniques of \cite{BD} Section 3.

\begin{ex}
The Thue-Morse substitution on the alphabet $\{a,b\}$ is given by 
$$\sigma\colon a\mapsto abba,\quad b\mapsto baab\,.$$
\end{ex}
In our construction $X_1$ will be copy of $W_2$. As $\sigma(a)$ begins and ends with an $a$, and  $\sigma(b)$ begins and ends with an $b$, we have four possible initial junctions we could choose:  $ab,\ aa,\ ba$ or $bb$.  Let us choose $aa$.

Considering a long word occurring in $S(\sigma)$, we have:
$$\cdots\ .abbaba.abba.ababbaba.ababba.abbaba.abba.ababba.abbaba.ababba.\cdots\, ,$$
where the dots mark the $aa$ junctions. We see we have four `words' giving us our 2-tiles, and so $X_2$ will be a copy of $W_4$. %at the first stage (and their supertile versions the higher approximants, also $W_4$'s). 
We specify the 2-tiles and calculate the substitution map on them as follows.
$$\begin{array}{lll}
A=abbaba&\buildrel\sigma\over\mapsto\quad abbabaabbaababbabaababba&=ABCD\\
B=abba&\buildrel\sigma\over\mapsto\quad abbabaabbaababba&=ABD\\
C=ababbaba&\buildrel\sigma\over\mapsto\quad abbabaababbabaabbaababbabaababba&=ACBCD\\
D=ababba&\buildrel\sigma\over\mapsto\quad abbabaababbabaabbaababba&=ACBD
\end{array}$$

The 3-tiles will be the words in the 2-tiles that lie between occurrences of $\sigma^2(a).\sigma^2(a)$'s. But of course these are $\sigma$ applied to the words that lie between occurrences of $\sigma(a).\sigma(a)$, and so are $\sigma(A)$,  $\sigma(B)$,  $\sigma(C)$  and  $\sigma(D)$. The expansion thus becomes constant, with $X_n=W_4$ for $n\geqslant 2$, and the bonding maps for $n\geqslant 2$ have symbolic representations as immediately above.

Thus, even though the initial substitution $\sigma$ is not proper, we arrive at a proper substitution. In general, if a primitive substitution on an alphabet with $n$ letters is additionally proper, then an expansion of the associated tiling space  can be constructing using a constant sequence for which each $X_i$ is $W_n$ and the bonding maps are represented symbolically by the substitution, see \cite{BD}. 

%%%%%%%%%%%%%
%%%%%%%%%%%%%
%%%%%%%%%%%%%
%%%%%%%%%%%%%
%%%%%%%%%%%%%
%%%%%%%%%%%%%
%%%%%%%%%%%%%
%%%%%%%%%%%%%

\section{Tiling Space Factors}\label{TilingSection}

In the study of the dynamics of homeomorphisms of the Cantor set, factors of the homeomorphism which can be represented as a subshift on a finite alphabet are often used. In this section we show how analogously we can represent any flow admitting a flow expansion as the inverse limit of a sequence of tiling space flows. 

In the case that the flow is minimal and equicontinuous, the tiling space flows in the limiting sequence are periodic. Any aperiodic tiling space flow is expansive, just as any aperiodic subshift is expansive. However, unlike the class of equicontinuous flows which are closed under inverse limits, the inverse limit of expansive flows need not be expansive, and indeed in many cases flows admitting a flow expansion are neither expansive nor equicontinuous. 

\begin{theorem}\label{tilingfactor}
Given a flow expansion $(\mX,x_\bullet)\approx_f\eX,$ the flow on $\mX$ is conjugate to the inverse limit of flows on tiling spaces obtained from the factor maps $p_n:\mX \to X_n$. Any flow space admitting an expansion is flow equivalent to the inverse limit of tiling spaces with factor maps as bonding maps.
\end{theorem}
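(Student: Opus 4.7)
The plan is to build, from a flow expansion $(\mX,x_\bullet)\approx_f\eX$, an inverse sequence of tiling spaces with factor maps for bonding maps, and then show that the induced map from $\mX$ into the inverse limit is a conjugacy. The symbolic data encoded in each $f_n$ will give the substitution–like bonding maps between the tiling spaces, and the hypothesis that the $p_n$ and $f_n$ are factor maps of wedges with partial flows will supply the flow-preservation at every level.

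First I would unpack Definitions \ref{global tiling space}--\ref{tilingfactordef} at each level: each $\widehat{p}_n\colon \mX\to\cT(p_n)$ is a factor map by hypothesis, so the flow on $\mX$ is semiconjugate to the suspension flow on the tiling space $\cT(p_n)\subset\cT(X_n)$. Next, using that $f_n\colon X_{n+1}\to X_n$ is itself a factor map of wedges with partial flows, I would construct for each $n$ a factor map of tiling spaces $\widehat{f}_n\colon\cT(p_{n+1})\to\cT(p_n)$: symbolically, if $f_n$ has representation $a_j\mapsto b_{j_1}\cdots b_{j_{k_j}}$, then $\widehat{f}_n$ replaces each tile labelled $a_j$ by the concatenation of the tiles $b_{j_1},\ldots,b_{j_{k_j}}$; the factor-map condition on $f_n$ forces the periods to add correctly, so this map intertwines the suspension flows. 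The relation $p_n=f_n\circ p_{n+1}$ from Definition \ref{flow expansiondef} then lifts to $\widehat{p}_n=\widehat{f}_n\circ\widehat{p}_{n+1}$, so the universal property of the inverse limit delivers a continuous $\Phi\colon \mX\to\underleftarrow{\lim}(\cT(p_n),\widehat{f}_n)$ given by $\Phi(x)=(\widehat{p}_n(x))_n$, which is automatically flow-preserving because each factor $\widehat{p}_n$ is.

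It then remains to check $\Phi$ is a homeomorphism. Composing with the inverse-limit projections $\bar{p}_n\circ\widehat{p}_n=p_n$, I recover the map $x\mapsto (p_n(x))_n$, which is a homeomorphism onto $\underleftarrow{\lim}(X_i,f_i)$ by hypothesis; this forces $\Phi$ to be injective, and a standard compactness and closed-image argument, together with the fact that the $\bar{p}_n\colon\cT(p_n)\to X_n$ assemble into a surjection onto the $(X_n,f_n)$ limit, shows $\Phi$ is surjective. Continuity and compactness of $\mX$ then promote $\Phi$ to a homeomorphism, hence a flow conjugacy, proving the first assertion.

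For the second, more general statement, given only an expansion (not a flow expansion) I would invoke Theorem \ref{gen exp to flow exp} to replace the flow on $\mX$ by a new flow $\phi'$ with respect to which the same bonding maps and projections constitute a flow expansion. By the first part, $(\mX,\phi')$ is conjugate to the inverse limit of tiling spaces with factor bonding maps. Since $\phi$ and $\phi'$ have identical orbits with matching orientations (the reparameterisation of Theorem \ref{gen exp to flow exp} respects direction), the identity on $\mX$ is a flow equivalence between the two, so the original $\mX$ is flow equivalent to that inverse limit. The main technical obstacle I anticipate is the careful bookkeeping in the construction of $\widehat{f}_n$ and the verification that it is a well-defined factor map of suspension flows uniformly over all $n$; the conjugacy itself is then essentially formal given the universal property of inverse limits and the fact that $\mX$ is already modelled on $\underleftarrow{\lim}X_n$.
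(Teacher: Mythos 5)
Your proposal is correct and follows essentially the same route as the paper: construct the induced factor maps $\widehat{f}_n\colon\cT(p_{n+1})\to\cT(p_n)$ from the $f_n$, use the compatibility $\widehat{p}_n=\widehat{f}_n\circ\widehat{p}_{n+1}$ to get the map into the inverse limit of tiling spaces, verify it is a flow conjugacy, and deduce the second assertion from Theorem \ref{gen exp to flow exp}. The only difference is that you spell out the injectivity/surjectivity verification (via $\bar{p}_n\circ\widehat{p}_n=p_n$ and compactness) that the paper leaves implicit.
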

\begin{proof}
Consider for a given $n\in \mZ^+$ the corresponding factor map $p_n:\mX \to X_n$ and the resulting factor map $\widehat{p}_n: \mX \to \cT(p_n)$ as given in Definition \ref{tilingfactordef}. It is clear that the bonding maps $f_n \colon X_{n+1}\to X_n$ induce factor maps  $\widehat{f}_n \colon \cT(p_{n+1})\to \cT(p_n)$ so that for $n,k \in \mZ^+$ we have  $\widehat{p}_n = \widehat{f}_{n,n+k+} \circ \widehat{p}_{n+k}$ due to the corresponding equality of the original mappings. Then the map $\widehat{h}: \mX \to \underleftarrow{\lim}\{\cT(p_1) \xleftarrow{\widehat{f}_1}\cT(p_2) \xleftarrow{\widehat{f}_2} \cT(p_3) \xleftarrow{\widehat{f}_3} \cdots\}$ given by $\widehat{h}(x)=(\widehat{p}_n(x))_{n\in \mZ^+}$ is a homeomorphism that conjugates the flow on $\mX$ with the inverse limit of the flows on $\cT(p_n)$.

The last statement of the theorem follows from Theorem \ref{gen exp to flow exp} as we can alter the flow on $\mX$ so that any expansion of $\mX$ becomes a flow expansion.
\end{proof}

Observe that the tiling spaces $\cT(p_n)$ can be periodic, and so it is possible to obtain aperiodic equicontinuous flows in the limit. In fact, if $\mX$ is minimal, each $\cT(p_n)$ will also be minimal, and the only equicontinuous minimal sets of a shift space are periodic. Thus, when $\mX$ is equicontinuous and admits an expansion, it is necessarily minimal and therefore each $\cT(p_n)$ is periodic. 

It has been proven in the context of Bratteli diagrams representing Cantor set homeomorphisms in \cite{DM}, with a subsequent alternate proof in \cite{H}, that any Bratteli diagram of finite rank represents an equicontinuous or expansive homeomorphism. However, there are expansive homeomorphisms of infinite rank. 

In our context, consider a flow space $\mX$ which has an  expansion $\mX \approx \eX$ which is bounded; i.e., there is a uniform bound $B$ on the number of circles used to form the $X_n$. Then the rational Cech cohomology $\check{H}^1(\mX;\mQ)=\displaystyle\lim_{n\to\infty}H^1(X_n;\mQ)$ has finite rank bounded above by $B$. An immediate consequence is that if we know a flow space $\mX$ has $\check{H}^1(\mX;\mQ)$ of infinite rank, then it cannot have a bounded expansion, for any choice of base point. Cases as such include the generic one dimensional canonical projection tilings with internal space of dimension at least 2. This follows from \cite{FHK} Chapter IV, Theorem 6.10. See Chapter IV Section 6 for a broader discussion of infinite generation in the cohomology of canonical projection tilings, yielding a wide class of tiling spaces that are not of bounded rank.

There is a distinction of the flows on flow spaces that has no direct analogue in Cantor set homeomorphisms.

\begin{defn}
Say the flow on a flow space admitting an expansion is \emph{tiling-like} if it is the limit of an inverse sequence of tiling space flows in which the bonding maps are factor maps.
\end{defn}

From the above Theorem and Theorem \ref{flow expansion}, if a flow space admits a factor map to some $W_n$ with a partial flow, then it is tiling-like. Conversely, if the flow is tiling-like, then it admits a factor map to a tiling space flow, and any tiling space flow admits a factor map to some $W\in \cW$. By Theorem \ref{flow expansion}, it therefore admits a flow expansion. It should be noted that the definition of tiling-like is not base point dependent.

\begin{question}
What is an intrinsic characterisation of the tiling-like flows?
\end{question}

It is known that tiling spaces arising from a non-Pisot substitution are generally weakly mixing \cite{S}, and so there are even tiling flows that are weakly mixing.  Thus, the characterisation of flows that are not tiling-like must involve a property stronger than (topologically) weakly mixing as any flow which is not  weakly mixing is a suspension flow.

%%%%%%%%%%%%%%%%%%%
%%%%%%%%%%%%%%%%%%%
%%%%%%%%%%%%%%%%%%%
%%%%%%%%%%%%%%%%%%%
%%%%%%%%%%%%%%%%%%%

\section{Rigidity of Flow Spaces}\label{SectRigidity}

With abelian topological groups, substitution tiling spaces and tiling spaces more generally, there are some established rigidity results that take various forms \cite{Sch},\cite{BS},\cite{K}. Here, we shall produce a result that is in the same spirit by showing that any flow equivalence of flow spaces is isotopic to a combinatorial isomorphism, which we now define.

\begin{defn}\label{gencombiso}
Given  flow spaces with expansions $(\mX,x_\bullet)\approx\eX$ and $(\mY,y_\bullet)\approx\eY$,  a \emph{combinatorial isomorphism} relative to these expansions is a flow equivalence $(\mX,x_\bullet)\to (\mY,y_\bullet)$  represented by a commutative diagram in which the maps $d_i$ and $u_i$ are positive maps:

\[
\begin{tikzcd}[font=\large] 
X_{m_1}  \arrow[swap]{d}{d_1} && X_{m_2}\arrow[swap]{d}{d_2}\arrow{ll}{f_{m_1,\,m_2}}&&X_{m_3}\arrow{ll}{f_{m_2,\,m_3}}\arrow[swap]{d}{d_3} &&  \cdots \arrow{ll}  \\ Y_{n_1} && Y_{n_2} \arrow{ll}{g_{n_1,\,n_2}}\arrow{llu}{u_1} &&Y_{n_3} \arrow{ll}\arrow{ll}{g_{n_2,\,n_3}}\arrow{llu}{u_2}  &&\cdots \arrow{ll}   
\end{tikzcd}
\]

\end{defn}

Observe that such a commutative diagram does determine continuous maps $d: (\mX,x_\bullet)\to (\mY,y_\bullet)$ and $u\colon (\mY, y_\bullet )\to (\mX, x_\bullet)$. In particular, for a given $\left( x_n\right)_{n\in \mZ^+}\in \underleftarrow{\lim}(X_i,f_i)$, its $d$ image is $\left( y_n\right)_{n\in \mZ^+} \in \underleftarrow{\lim}(Y_i,g_i)$, where for $n\in \mZ^+,$ $y_n = g_{n,\,m_k}\left(d_k(x_{n_k})\right)$, where $k$ satisfies $m_k>n$. Similarly for $u.$ By the commutativity of the diagram, the maps $d$ and $u$ are mutual inverses and thus homeomorphisms. As positive maps, $d_i$ and $u_i$ are represented up to homotopy by their symbolic representation. 
Observe that a combinatorial isomorphism yields an isomorphism of the inverse sequences of the expansions in $\overline{\cW}.$

If the flow spaces are equicontinuous and the approximating spaces $X_i$ and $Y_i$ in a flow expansion are circles, then the projection maps onto the $X_i$ and $Y_i$ are characters for a topological group structure, and the combinatorial isomorphism can be used to construct an isomorphism of the associated group structure. The following result is thus the analogue of a similar result that holds for certain classes of groups, including compact, connected, abelian groups such as solenoids \cite{Sch}. While our flow spaces generally do not have a true group structure, we will see in the following sections that there are associated algebraic structures with which the flow spaces can be identified up to flow equivalence. The following theorems are key to establishing the link between flow equivalences and isomorphisms of the corresponding algebraic structures.

\begin{theorem}\label{isotopic}
If $h:(\mX, x_\bullet) \to (\mY, y_\bullet )$ is a flow equivalence, then relative to \emph{any} given expansions of $(\mX, x_\bullet)$ and $(\mY, y_\bullet )$ there is an isotopic combinatorial isomorphism, yielding an isomorphism of the corresponding inverse sequences in $\overline{\cW.}$
\end{theorem}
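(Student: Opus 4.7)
The plan is to reduce the theorem to comparing two expansions of a single pointed flow space, and then to build the combinatorial isomorphism by interleaving. First, I use $h$ to pull back the $\mY$-expansion: since $h$ is a flow equivalence fixing basepoints, each composition $q_n\circ h\colon(\mX,x_\bullet)\to(Y_n,y_n)$ is itself a projection (it is continuous, surjective, basepoint-preserving, and both locally injective and orientation-preserving along orbits because $h$ has these properties), and the inverse sequence $(Y_n,g_n,q_n\circ h)$ inherits the commutativity and limit conditions from the expansion of $\mY$. So the theorem reduces to: any two expansions of a single pointed flow space admit a combinatorial isomorphism which is isotopic to the identity on $\mX$.

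The core technical step is the following factoring lemma: given a projection $q\colon(\mX,x_\bullet)\to(W,w)$ to an oriented wedge of circles together with an expansion $(\mX,x_\bullet)\approx\eX$, for every sufficiently large $m$ there exist a positive map $d\colon X_m\to W$ and a basepoint-preserving isotopy through projections carrying $q$ to $d\circ p_m$. The idea is that the clopen transversal $T_m=p_m^{-1}(x_m)$ partitions into $k_m$ clopen pieces corresponding to the circles of $X_m$ (determined by which circle the next flow arc visits, exactly as in the proof of Theorem \ref{flow expansion}). For $m$ large, the shrinking of the $p_m$-fibres together with uniform continuity of the flow forces all flow arcs starting in a single clopen piece of $T_m$ to traverse the same combinatorial sequence of circles of $W$ under $q$. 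That sequence defines $d$ on the corresponding circle of $X_m$; orientations and wedge point match by construction, so $d$ is a positive map, and a fibrewise reparametrisation of the flow inside each tube produces the required isotopy.

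Interleaving now yields the combinatorial isomorphism. Starting with $m_1=1$, apply the lemma to $q=q_{n_1}\circ h$ for suitably large $n_1$ to obtain a positive map $d_1\colon X_{m_1}\to Y_{n_1}$ factoring $q_{n_1}\circ h$ through $p_{m_1}$ up to isotopy. Exchanging the roles of the two expansions, apply the lemma again to the projection $p_{m_1}$ relative to the $(Y_n,g_n,q_n\circ h)$ expansion to obtain $u_1\colon Y_{n_2}\to X_{m_1}$ for some $n_2>n_1$, enlarging $m_1$ to $m_2$ if necessary to align the two factorisations. Iterating produces the ladder diagram of Definition \ref{gencombiso}. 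The commutativity identities $u_i\circ d_{i+1}=f_{m_i,m_{i+1}}$ and $d_i\circ u_i=g_{n_i,n_{i+1}}$ are forced because both sides of each identity are positive maps with the same symbolic representation (they realise the same composition up to isotopy, and positive maps of wedges of circles are determined up to homotopy by their symbolic representation). Concatenating the lemma's local isotopies gives the global isotopy from $h$ to the homeomorphism of $(\mX,x_\bullet)$ to $(\mY,y_\bullet)$ induced by the combinatorial isomorphism.

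The main obstacle is the factoring lemma: producing the isotopy \emph{through projections} (rather than through arbitrary continuous maps) requires a careful flow-box analysis near the transversals $T_m$ and $q^{-1}(w)$, preserving orientation, local injectivity along orbits, and the wedge singularity at $x_\bullet$. The geometric input that makes this tractable is that $T_m$ and $q^{-1}(w)$ are both totally disconnected clopen transversals of the matchbox structure on $\mX$ (Lemma \ref{projectionfacts}), so the former can be made to refine the combinatorial data of the latter as $m\to\infty$, and the fibrewise flow-time adjustments are continuous precisely because the refinement is by clopen pieces.
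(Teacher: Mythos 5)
Your overall strategy is essentially the paper's: the pullback reduction (observing that $q_n\circ h$ is again a projection, so the $\mY$-expansion becomes a second expansion of $(\mX,x_\bullet)$) plus your ``factoring lemma'' repackage exactly the geometric mechanism the paper uses directly with $h$ --- shrinking $p_m$-fibres, clopen partition of the transversal by itinerary, and fibrewise flow-time reparametrisation inside tubes to align fibres. The reduction is a clean and arguably tidier way to organise the argument, and the factoring lemma isolates the right technical core.

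There is, however, one step whose justification as written would fail: the claim that the identities $u_i\circ d_{i+1}=f_{m_i,m_{i+1}}$ and $d_i\circ u_i=g_{n_i,n_{i+1}}$ are ``forced because both sides are positive maps with the same symbolic representation.'' Symbolic representations determine positive maps only up to homotopy, so this argument yields a diagram that commutes up to homotopy, not on the nose. Definition \ref{gencombiso} requires strict commutativity --- that is what makes the zigzag define well-defined, mutually inverse continuous maps $d$ and $u$ on the inverse limits, and hence an actual flow equivalence isotopic to $h$; a homotopy-commutative ladder gives neither. The repair is available inside your own construction and is exactly what the paper does: arrange that \emph{all} the factorisations hold for one common isotoped homeomorphism $\tilde h$, i.e. $d_i\circ p_{m_i}=q_{n_i}\circ\tilde h$ and $u_i\circ q_{n_{i+1}}\circ\tilde h=p_{m_i}$ exactly; then $u_i\circ d_{i+1}\circ p_{m_{i+1}}=u_i\circ q_{n_{i+1}}\circ\tilde h=p_{m_i}=f_{m_i,m_{i+1}}\circ p_{m_{i+1}}$, and surjectivity of $p_{m_{i+1}}$ cancels to give strict equality. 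This in turn requires you to check that each successive application of the factoring lemma (whose isotopy is supported near ever finer transversals) does not disturb the factorisations already established at earlier stages --- a point you should make explicit when concatenating the local isotopies.
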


\begin{proof}
Let $h:(\mX, x_\bullet) \to (\mY, y_\bullet )$ be a flow equivalence, and let $(\mX, x_\bullet)\approx\eX$ and $(\mY, y_\bullet )\approx\eY$ be any expansions.  By Theorem \ref{gen exp to flow exp}, we assume these are flow expansions without loss of generality.  We identify $\mX$ and $Y$ with the inverse limits of their expansions so that the diameters of the projections $p_i\, (q_i)$ to $X_i\,(Y_i)$ are bounded by $\frac{1}{2^i}$ as discussed in Section \ref{back}. We set $n_1=1$ and proceed to identify $m_1$ as in Definition \ref{gencombiso}. Consider a neighbourhood $U$ of $y_\bullet$ of the form $(-\epsilon,\epsilon)\,.\,q_1^{-1}(y_1)$. Each fibre $p_i^{-1}(x_i)$ contains $x_\bullet,$ and so by continuity $h\left( p_i^{-1}(x_i)  \right) \subset U$ for all $i$ sufficiently large.  Choose then $i$ and $\lambda>0$ so that $h\left( (-\lambda,\lambda)\,.\,p_i^{-1}(x_i)  \right) \subset U.$ In principle, it could happen that $h$ maps two points of $p_i^{-1}(x_i)$ to the same ``matchstick" $(-\epsilon,\epsilon)\,.\,z$ in $U.$ To avoid this, consider the pullback of the flow $\phi_{\mY}$ on $\mY$  to $\mX,$ $h^*\phi_{\mY}.$ 
Now let $\eta$ be the minimum return time to $p_i^{-1}(x_i)$ of the flow $h^*\phi_{\mY}.$ We now have the potentially smaller neighbourhood of $y$, $U'= (-\mu,\mu)\,.\,q_1^{-1}(y_1)$, where $\mu = \min \{\frac{\eta}{2}, \epsilon\}.$ As before, choose $j\geq i$ and $\tau>0$ so that $h\left( (-\tau,\tau)\,.\,p_{j}^{-1}(x_{j})  \right) \subset U'.$ As $p_{j}^{-1}(x_j)\subset p_i^{-1}(x_i),$ $h$ maps the points of $p_{j}^{-1}(x_j)$ injectively to the set of matchsticks in $U'$ and we can now isotope $h$ to a homeomorphism $h_1$ so that $h_1\left(  p_j^{-1}(x_j)   \right)  \subset q_1^{-1}(y_1).  $ (For the proof of Theorem \ref{germ}, observe that as $h_1$ maps segments of orbits to orbit segments, the return map to $p_j^{-1}(x_j)$ of the flow on $\mX$ is conjugate via the restriction of $h_1$ to the return map to $h_1\left(  p_j^{-1}(x_j)   \right) $ of the flow on $\mY.$)

The points of $h_1\left(  p_j^{-1}(x_j)   \right) $ can return via the flow $\phi_{\mY}$ to $ q_1^{-1}(y_1)$ multiple times before returning to $h_1\left(  p_j^{-1}(x_j)   \right) $. Recall that, as in the proof of Theorem \ref{flow expansion}, there is a partition of $q_1^{-1}(y_1)$ into clopen subsets $K(\mY)^1_1,\dots,K(\mY)_{\ell_1} ^1$ where $q_1$ maps the forward $\phi_{\mY}$ orbits of the points in the same clopen set $K(\mY)^1_k$ to the same circle in $Y_1$ until the return to $ q_1^{-1}(y_1)$. There is an analogous partition of $p_j^{-1}(x_j) $ into clopen sets $K(\mX)^j_1,\dots,K(\mX)_{k_j} ^j.$  Consider a partition of $h_1\left(  p_j^{-1}(x_j)   \right) $ into clopen sets $C_1,\dots,C_\ell$ that refines the partition $h_1\left(K(\mX)^j_1\right),\dots,h_1\left( K(\mX)^j_{k_j}\right)$ and where any two points in the same $C_i$ have the same itinerary with respect to the partition $K(\mY)^1_1,\dots,K(\mY)_{\ell_1} ^1$ of returns to $ q_1^{-1}(y_1)$ until the first return to $h_1\left(  p_j^{-1}(x_j)   \right) $.  Observe that $\mY$ is partitioned into the tubes $[0,r_i).C_i, $ where $r_i$ is the $\phi_{\mY}$ return time of points of $C_i$ to $h_1\left(  p_j^{-1}(x_j)   \right) $, and any given such tube is contained in a single tube $h_1 \left( [0,P(\mX)^{j}_{w_i}). K(\mX)^j_{w_i} \right)$,  where  $P(\mX)^{j}_{w_i}$ is the return time to the fibre $p_j^{-1}(x_j)$  as in Theorem \ref{flow expansion}. Now isotope $h_1$ to a homeomorphism $\tilde{h}$ that is the same as $h_1$ on $p_j^{-1}(x_j)$ and so that $\tilde{h}$ maps  $p_j$ fibres whose images lie within a single $C_i$ tube to $q_1$ fibres, which can be achieved $C_i$ tube by $C_i$ tube. As the $p_j$ fibres have images that can be split across multiple $C_i$ tubes, there is not necessarily an induced map $f: X_j \to Y_1$ satisfying $f\circ p_j = q_1 \circ \tilde{h}.$ Now, let $\delta>0$ be the minimum distance between any two $C_i$. Choose $\Delta>0$ so that points within $\Delta$ in $\mX$ are mapped by the (uniformly continuous) $\tilde{h}$ to points in $\mY$ of distance less than $\delta$ apart. Now choose $n_1\geq j$ so that $\frac{1}{2^{n_1}}<\Delta.$  By construction, $\tilde{h} \left(  p^{-1}_{n_1}(x_{n_1})\right)$ is contained in a single $C_i.$  As $C_i$ is contained in a single $\tilde{h} \left( K(\mX)^j_{w_i} \right)$ and $f_{j,n_1}$ is a factor map, for all $t\in [0,P(\mX)^{j}_{w_i})$ we have that $\tilde{h}\left(t.p^{-1}_{n_1}(x_{n_1})\right)$ is contained in the $\tilde{h}$ image of the same (portion of a) $p_j$ fibre and hence within a single $q_1$ fibre, leading to a well-defined continuous function $d_1: p_{n_1}\left(  [0,P(\mX)^{j}_{w_i})\, . \,p^{-1}_{n_1}(x_{n_1})\right) \to Y_1$ satisfying $d_1\circ p_j = q_1 \circ \tilde{h}$ for all points in $[0,P(\mX)^{j}_{w_i})\,. \,p^{-1}_{n_1}(x_{n_1})$   Now $P^1_{w_i}\,.\,p^{-1}_{n_1}(x_{n_1})$  is contained in $\tilde{h}\left( p_j^{-1}(x_j)\right)$ but not necessarily in a single $C_j$ as it is likely the image of several $p_{n_1}$ fibres. With $X^{n_1}_i$ a circle used to form $X_{n_1}$ as in Theorem \ref{flow expansion}, consider the point $P^1_{w_i}.x_{n_1}, $ where as before we regard $x_{n_1}$ as a point in $X^{n_1}_i$. The  $p_{n_1}$ fibre of this point will then have diameter less than $\Delta$ and so $\tilde{h}$ will map this fibre into a single $C_j$, and this allows us to extend  $d_1$ continuously to include the segment in $X^{n_1}_i$ between $P^1_{w_i}.x_{n_1}$ and the next (following the flow) point of $f^{-1}_{j,n_1}(x_j)$. We can then similarly continue to extend $d_1$ to include all of $X^{n_1}_i$. We then repeat the same process for each of the circles used to form $X_{n_1}$ to finally obtain a continuous map $d_1: X_{n_1} \to Y_{m_1}$ satisfying  $d_1\circ p_j = q_{m_1} \circ \tilde{h}$.  

We now proceed to identify $m_2$.  Consider the partition $\tilde{h}\left(K(\mX)^{n_1}_1\right),\dots,\tilde{h}\left(K(\mX)^{n_1}_{k_{n_1}}\right)$ of $ \tilde{h}\left( p^{-1}_{n_1}(x_{n_1})\right)$ and let $\delta>0$ be the minimum distance between elements of this partition. Choose $m_2>m_1$ so that $\frac{1}{2^{m_2}}<\delta.$ Then $q^{-1}_{m_2}(y_{m_2})$ is contained in a single $\tilde{h}\left(K(\mX)^{n_1}_i\right).$ As before,  this means there is a continuous map $u_1$ satisfying $u_1\circ q_{m_2} = p_{m_1}\circ \tilde{h}^{-1}$ defined initially on $t.q^{-1}_{m_2}(y_{m_2}) $ for non--negative $t$ less than the return time of $\tilde{h}\left(K(\mX)^{n_1}_i\right)$ to  $ \tilde{h}\left( p^{-1}_{n_1}(X_{n_1})\right).$ Then one can extend $u_1$ to all of $Y_{m_2}$ satisfying $u_1\circ q_{m_2} = p_{m_1}\circ \tilde{h}^{-1}$.

Using the same technique, one defines the $d_i$ and $u_i$ recursively for all $i$ so that we obtain a commutative diagram as in Definition \ref{gencombiso}. By construction, the resulting map $d: \mX \to \mY$ is identical to $\tilde{h}$ and $u$ is identical to  $\tilde{h}^{-1}.$
\end{proof}

Observe that while $\tilde{h}$ does respect the direction of the flow, it cannot be constructed to conjugate flows in general. As shown in \cite{CS}, there are changes to the lengths of tiles in non--Pisot substitution tiling spaces that change the conjugacy class of the flow, even allowing for a rescaling of the flow. The identification of the $X_{n_i}$ and $Y_{m_i}$ via the maps $d_i$ bears some similarity to changing tile lengths in the context investigated there.

It should be noted that there are a number of theorems of a similar spirit to Theorem \ref{isotopic}, e.g. \cite{M},\cite{R}, but we are able to obtain the stronger result of the isotopic homeomorphism due to the special nature of the spaces and expansions that we consider. There is a similar theorem for flow equivalences that do not preserve base points, where we can no longer require that the maps $d_i$ and $u_i$ preserve base points.

Maps that do not preserve the points about which one has taken an expansion, in general will not be even homotopic to a combinatorial isomorphism. This is true in a strong sense: the path components of flow spaces correspond with orbits, and asymptotic orbits must be mapped to asymptotic orbits. Thus, if the base point $x_\bullet$ in $\mX$ is along an orbit that is asymptotic to another orbit in $\mX$ but the base point $y_\bullet$ in $\mY$  is in an orbit that is not asymptotic to any other orbit, then there is no way to homotope a homeomorphism $h\colon \mX \to \mY$ to a combinatorial isomorphism as any such isomorphism preserves the base point. Thus, the following result is the most to be expected in general.

\begin{theorem}\label{unpointed}
If $h:\mX \to \mY$ is a flow equivalence, then relative to \emph{any} expansions $(\mX,x_\bullet)\approx\eX$ and $(\mY, y_\bullet )\approx\eY$  there is an isotopic flow equivalence that is represented by a diagram as follows in which the maps $d_i$ and $u_i$ preserve the direction of the flow, but in general may not take wedge point to wedge point.
\[
\begin{tikzcd}[font=\large] 
X_{m_1}  \arrow[swap]{d}{d_1} && X_{m_2}\arrow[swap]{d}{d_2}\arrow{ll}{f_{m_1,\,m_2}}&&X_{m_3}\arrow{ll}{f_{m_2,\,m_3}}\arrow[swap]{d}{d_3} &&  \cdots \arrow{ll}  \\ Y_{n_1} && Y_{n_2} \arrow{ll}{g_{n_1,\,n_2}}\arrow{llu}{u_1} &&Y_{n_3} \arrow{ll}\arrow{ll}{g_{n_2,\,n_3}}\arrow{llu}{u_2}  &&\cdots \arrow{ll}   
\end{tikzcd}
\]
\end{theorem}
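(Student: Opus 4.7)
The plan is to run the construction of Theorem~\ref{isotopic} verbatim, replacing the wedge-fibre transversal $q_i^{-1}(y_i)\subset\mY$ by a transversal through the image point $y^*:=h(x_\bullet)$. When $y^*=y_\bullet$ the conclusion is immediate from Theorem~\ref{isotopic}, so I may assume $y^*\neq y_\bullet$.

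First I would set $y^*_i:=q_i(y^*)\in Y_i$. The tower identities $g_i(y^*_{i+1})=y^*_i$ and $g_i(y_{i+1})=y_i$ show that the condition $y^*_j=y_j$ is inherited downwards along the inverse system: if it holds at index $j$, then it holds for all $i\leq j$. Since $y^*\neq y_\bullet$, there is some $N$ with $y^*_{N}\neq y_N$, and by the contrapositive of downward inheritance $y^*_i\neq y_i$ for every $i\geq N$. After truncating and relabelling the expansion of $\mY$, each $T^*_i := q_i^{-1}(y^*_i)$ is a totally disconnected clopen transversal through $y^*$ by Lemma~\ref{projectionfacts}, with diameter tending to $0$. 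By Theorem~\ref{gen exp to flow exp} I may additionally assume both expansions are flow expansions.

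I would then repeat the tube--return--itinerary argument of Theorem~\ref{isotopic}, using $T^*_{m_i}$ in place of the wedge fibre at every stage. Setting $n_1=1$, I pick $m_1$ and indices $i,j$ large enough to isotope $h$ first to $h_1$ and then to $\tilde h$ so that $\tilde h(p_j^{-1}(x_j))\subset T^*_{m_1}$ and entire $p_j$-fibres are sent into $q_{m_1}$-fibres; partitioning $T^*_{m_1}$ by return-itinerary gives a tube decomposition of $\mY$ over which a well-defined continuous $d_1\colon X_{n_1}\to Y_{m_1}$ is produced by $d_1\circ p_{n_1}=q_{m_1}\circ\tilde h$, and I extend $d_1$ across the remaining circles of $X_{n_1}$ as in the original proof. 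Alternating between constructing $u_i$ and $d_{i+1}$ in the same fashion yields the full commutative diagram. Each $d_i$ and $u_i$ preserves flow orientation because $\tilde h$ does, but $d_i(x_{n_i})=y^*_{m_i}\neq y_{m_i}$ and symmetrically for $u_i$, which is exactly the loss of wedge-point preservation asserted.

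The main obstacle I anticipate is the inductive selection of $m_i$: in the pointed proof, $m_{i+1}$ was chosen so that the wedge fibre $q_{m_{i+1}}^{-1}(y_{m_{i+1}})$ lies inside a single clopen piece $\tilde h(K(\mX)^{n_i}_k)$ of the partition of $\tilde h(p_{n_i}^{-1}(x_{n_i}))$. Here I instead need the non-wedge transversal $T^*_{m_{i+1}}$ to lie in a single such piece. This follows because $y^*\in T^*_{m_{i+1}}$ for every $m_{i+1}$, the point $y^*\in \tilde h(p_{n_i}^{-1}(x_{n_i}))$ lies in exactly one piece of the partition, and $\mathrm{diam}\,T^*_{m_{i+1}}\to 0$, so for $m_{i+1}$ sufficiently large $T^*_{m_{i+1}}$ is contained in the same piece as $y^*$. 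Once this refinement at non-wedge transversals is verified, the remaining commutativity of the diagram and the identification of $d$ and $u$ with $\tilde h$ and $\tilde h^{-1}$ carry over from Theorem~\ref{isotopic} without further modification.
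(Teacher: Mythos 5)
Your proposal is correct and follows essentially the same route as the paper: the paper's proof also runs the Theorem \ref{isotopic} argument verbatim after replacing the wedge fibre $q_1^{-1}(y_1)$ by the transversal $F=q_1^{-1}(q_1(h(x_\bullet)))$ through $h(x_\bullet)$, partitioned compatibly with the return/itinerary tubes (the paper obtains that partition by flowing the sets $K(\mY)^1_j$ back by the time $t$ taking $q_1(h(x_\bullet))$ to $y_1$, which matches your return-itinerary partition of $T^*_{m_1}$). Your added observations — the downward inheritance of $y^*_j=y_j$ and the shrinking-diameter argument placing $T^*_{m_{i+1}}$ in a single partition piece — are harmless elaborations of details the paper leaves implicit.
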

\begin{proof}
 Given a flow equivalence $h:\mX \to \mY$  and expansions $(\mX,x_\bullet)\approx\eX$ and $(\mY, y_\bullet )\approx\eY$, we set $n_1=1$ and proceed to identify $m_1.$  The proof is very similar to that of Theorem \ref{isotopic}, and so we only highlight the portion of the proof where there is a significant difference. The role played by $q_1^{-1}(y_1)$ in the earlier proof is now played by $F=q_1^{-1}(q_1(h(x_\bullet))$. If $t$ is the minimal non--negative $t$ satisfying $t.\,q_1(h(x_\bullet))= y_1$, the sets $-t.\,K(\mY)^1_1 \cap F,\dots,-t.\,K(\mY)_{\ell_1} ^1 \cap F$ play the role previously played by the sets $K(\mY)^1_1,\dots,K(\mY)_{\ell_1} ^1.$ Similarly adjustments need to be made recursively; otherwise, the proof is essentially the same.

\end{proof}
Observe that by truncation, telescoping and re--indexing, we can obtain a commutative diagram of the following form in either the pointed or unpointed case:

\[
\begin{tikzcd}[font=\large] 
X_{1}  \arrow[swap]{d}{d_1} && X_{2}\arrow[swap]{d}{d_2}\arrow{ll}{f_{1}}&&X_{3}\arrow{ll}{f_{2}}\arrow[swap]{d}{d_3} &&  \cdots \arrow{ll}  \\ Y_{1} && Y_{2} \arrow{ll}{g_{1}}\arrow{llu}{u_1} &&Y_{3} \arrow{ll}\arrow{ll}{g_{2}}\arrow{llu}{u_2}  &&\cdots \arrow{ll}   
\end{tikzcd}
\]

Close examination of the proof of Theorem \ref{isotopic} reveals a close connection with the notion of germinal equivalence, which we now define.
\begin{defn}\label{germinal}
If $h_i:C_i \to C_i$ are minimal homeomorphisms of Cantor sets for $i=1,2$, we say that $h_1$ and $h_2$ are \emph{germinally equivalent} if for a given $i\in\{1,2\}$ and  clopen subset $K \subset C _i$ there is a corresponding $\delta_K>0$ 
so that for \emph{any} neighbourhood $N_{i+1} \subset C_{i+1}$ with diameter less than $\delta_K$,  there is a neighbourhood $N_i\subset K$ so that the return homeomorphism of $h_i$ to $N_i$ is conjugate to the return homeomorphism of $h_{i+1}$ to $N_{i+1}$, where $i+1$ is interpreted $\!\!\!\!\mod 2$. 
\end{defn}

\begin{theorem}\label{germ}
Minimal homeomorphisms $h_i:C_i \to C_i$ of Cantor sets for $i=1,2$ are germinally equivalent if and only if the suspensions of $h_1$ and $h_2$ are flow equivalent.  
\end{theorem}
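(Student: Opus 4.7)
We establish both directions; write $\mX$ and $\mY$ for the suspensions of $h_1$ and $h_2$ respectively, and regard $C_i$ as a clopen cross-section of the respective flow, with $h_i^{N}$ denoting the first-return map of $h_i$ to a clopen $N\subset C_i$.

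For the direction flow equivalence $\Rightarrow$ germinal equivalence, let $f\colon\mX\to\mY$ be a flow equivalence and fix a clopen $K\subset C_1$. After a small flow-shift isotopy of $f$ (which preserves its flow equivalence class) we may assume $f(K)\cap C_2=\emptyset$. Then $f(K)$ is a compact transversal to the flow on $\mY$ disjoint from $C_2$, and by minimality every backward orbit from $C_2$ hits it. Set $\tau(c):=\min\{t\geq 0:-t\cdot c\in f(K)\}$ and $\psi(c):=-\tau(c)\cdot c\in f(K)$; the map $\psi\colon C_2\to f(K)$ is continuous. The integer $k(c)$ counting the number of $C_2$-visits of the forward orbit of $\psi(c)$ up to and including $c$ is locally constant and, by compactness, takes only finitely many values, partitioning $C_2$ into clopen sets $D_1,\ldots,D_m$ on each of which $\psi$ restricts to a homeomorphism onto a clopen subset of $f(K)$. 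Let $\delta_K$ be a Lebesgue number for $\{D_j\}$. For any clopen $N_2\subset C_2$ of diameter less than $\delta_K$ we have $N_2\subset D_j$ for some $j$, and $N_1:=f^{-1}(\psi(N_2))\subset K$ is clopen; the flow equivalence $f$ conjugates the flow-return map of $\mX$ to $N_1$ with that of $\mY$ to $\psi(N_2)$, while $\psi^{-1}|_{\psi(N_2)}$, a flow time-shift along orbits, conjugates the latter with the flow-return map to $N_2$. This is essentially the same orbit-tracking as in the parenthetical within the proof of Theorem \ref{isotopic}; since $N_i\subset C_i$, the flow-return maps are precisely $h_i^{N_i}$.

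For the direction germinal equivalence $\Rightarrow$ flow equivalence, the plan is to construct expansions of $\mX$ and $\mY$ with the same symbolic representation and invoke Lemma \ref{seq rep equiv}. Apply germinal equivalence with $K=C_2$ to obtain $\delta>0$; fix $x_\bullet\in C_1$ and a clopen $T_1\subset C_1$ with $x_\bullet\in T_1$ and $\operatorname{diam}(T_1)<\delta$. Germinal equivalence yields a clopen $S_1\subset C_2$ and a conjugacy $\phi\colon T_1\to S_1$ of $h_1^{T_1}$ with $h_2^{S_1}$. Running the construction of Theorem \ref{flow expansion} from $T_1$ produces an expansion $(\mX,x_\bullet)\approx(X_n,f_n)$ with nested clopen cross-sections $T_1\supset T_2\supset\cdots$ and, at each level, a partition $T_n=\bigsqcup_j K_j^n$ of constant $h_1$-flow-return time. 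Set $S_n:=\phi(T_n)$ and $L_j^n:=\phi(K_j^n)$. Since $\phi$ conjugates $h_1^{T_1}$ with $h_2^{S_1}$, it conjugates the induced return maps $h_1^{T_n}$ with $h_2^{S_n}$ for every $n$, so the itinerary of each $L_i^{n+1}$ through $\{L_j^n\}$ under $h_2^{S_n}$ agrees combinatorially with the itinerary of $K_i^{n+1}$ through $\{K_j^n\}$ under $h_1^{T_n}$. Refining both families of partitions (via $\phi$ and $\phi^{-1}$) so that each $L_j^n$ in addition has constant $h_2$-flow-return time to $S_n$ gives transferred data that assembles into an expansion $(\mY,\phi(x_\bullet))\approx(Y_n,g_n)$ whose symbolic representation coincides with that of $(X_n,f_n)$; Lemma \ref{seq rep equiv} then delivers a flow equivalence $\mX\to\mY$.

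The principal technical obstacle is in this reverse direction: verifying that the transferred data genuinely assemble into an expansion of $\mY$ in the sense of Definition \ref{expansion}. The projections $q_n\colon\mY\to Y_n$, obtained by tubularising the flow over each $L_j^n$ and reparameterising to match the combinatorial data, are continuous and commute with the bonding maps by construction; the remaining requirement, that they induce a homeomorphism onto the inverse limit, reduces to $\operatorname{diam}(S_n)\to 0$, which follows from the uniform continuity of $\phi$ on the compact set $T_1$ together with $\operatorname{diam}(T_n)\to 0$. A secondary subtlety is that the partition refinements must be compatible across levels; this is handled by refining inductively from the top down, using that return times to clopen subsets of a Cantor minimal system are locally constant integer-valued functions and hence yield only finite refinements at each stage. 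The conceptual kernel of both directions is already visible in the parenthetical remark within the proof of Theorem \ref{isotopic}: a flow equivalence isotoped to send a clopen cross-section into a clopen cross-section automatically conjugates the corresponding first-return homeomorphisms, which is precisely germinal equivalence on the Cantor side.
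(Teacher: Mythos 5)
Your argument for the direction ``flow equivalence $\Rightarrow$ germinal equivalence'' is a legitimate alternative to the paper's, which instead routes through the combinatorial isomorphism supplied by Theorem \ref{isotopic}; your backward first-hitting map $\psi$ and the multiplicity partition $\{D_j\}$ play the role of the paper's clopen cover $\{A_x\}$, and the Lebesgue-number finish is the same. Two points there need more care than you give them: the claimed ``small flow-shift isotopy'' achieving $f(K)\cap C_2=\emptyset$ cannot in general be a single global time shift (an arbitrary flow equivalence can place $f(K)$ at all heights below $C_2$, so every $t\in[0,1]$ pushes some point of $f(K)$ into $C_2$), so you need a point-dependent reparameterisation along orbits; and the continuity of $\tau$ and $\psi$ is exactly what that disjointness buys you, so it should be argued rather than asserted.

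The genuine gap is in the converse, where you have also made the easy direction hard. The paper disposes of it in one line: the suspension of the return homeomorphism of $h_i$ to a clopen $A\subset C_i$ is flow equivalent to the suspension of $h_i$, so a single conjugacy $h_1^{T_1}\cong h_2^{S_1}$ already yields the flow equivalence as a composite of three flow equivalences, with no expansions needed. Your expansion-transfer route, as written, does not close. The assertion that the requirement ``the $q_n$ induce a homeomorphism onto the inverse limit'' reduces to $\mathrm{diam}(S_n)\to 0$ is false: injectivity of $y\mapsto(q_n(y))_n$ needs the diameters of \emph{all} fibres, i.e.\ of the tube slices $t\cdot L^n_j$ for $t\in[0,P^n_j)$, to tend to $0$. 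Since the flow on $\mY$ is typically expansive and the return times $P^n_j$ are unbounded, two points of a tiny $L^n_j$ can separate to a definite distance at intermediate times even though they recombine within $S_n$ at the return; shrinking the base sets gives no control over this. The proof of Theorem \ref{flow expansion} handles precisely this issue by choosing each partition \emph{after} fixing a modulus of uniform continuity of the flow over the maximal return time at that level, and your sets $L^n_j=\phi(K^n_j)$ inherit no such metric control from $\phi$, which is only a topological conjugacy of the induced maps on the cross-sections. Either replace this direction by the paper's observation, or interleave your transferred partitions with independently constructed, sufficiently fine partitions on the $\mY$ side (refining back and forth through $\phi$ and telescoping) — but the latter is substantially more work than the statement requires.
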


\begin{proof}
Suppose that the homeomorphisms $h_i:C_i \to C_i$ of Cantor sets for $i=1,2$ are germinally equivalent. Observe that if $A\subset C_i$ is a clopen subset, then the suspension of the return homeomorphism of $h_i$ to $A$ is flow equivalent to the suspension of $h_i$, and so the conclusion follows directly. (Observe that for this direction we do not require minimality.)

Conversely, suppose that the suspension $\mX_1$ of $h_1$ is flow equivalent to the suspension $\mX_2$ of $h_2$. Given $i\in\{1,2\}$ and $K \subset C_i$ we then have that the suspension $\mX_{i+1}$ of $h_{i+1}$ is flow equivalent to the suspension $S_K$ of the return homeomorphism $\cR_K$ of $h_i$ to $K.$  $\mX_{i+1}$ is a quotient of $C_{i+1} \times [0,1]$ and $C_{i+1}$ can be identified with $C_{i+1} \times \{t\}$ for any $t\in [0,1)$ and the return map of the suspension flow to any such $C_{i+1} \times \{t\}$ is conjugate to $h_{i+1},$ and similarly for $S_K$ and $K \times [0,1]$.  Let $x\in C_{i+1}$ be any point, which we then regard as a point $x_\bullet$ in $C_{i+1} \times \{0\}\subset \mX_{i+1}.$  The map $p^{i+1}_1: \mX_{i+1} \to S^1= X^{i+1}_1$ given by $p^{i+1}_1\left( C_{i+1} \times \{t\} \right) = t \, \mod \, \mathbb{Z}$ (regarding $S^1$ as $\mathbb{R}/\mathbb{Z}$) is a factor map sending $x_\bullet $ to $\mathbf{0}$, which we regard as the base point in $X^{i+1}_1$. As shown in Theorem \ref{flow expansion}, this can be completed to a flow expansion about $x_\bullet,$ $(\mX_{i+1},x_\bullet)\approx\underleftarrow{\lim}\{X_1^{{i+1}} \xleftarrow{f_1} X_2^{i+1} \xleftarrow{f_2} X_3^{i+1} \xleftarrow{f_3} \cdots \cdots\}$. By hypothesis, there is a flow equivalence $h: \mX_{i+1} \to S_K.$ Let $\tau\in [0,1)$ be the uniquely determined element of $[0,1)$ such that $h(x_\bullet)\in K \times \{\tau\}.$ We then have the factor map $p^K_1: S_K \to S^1= X^{K}_1$ given by $p^K_1\left( C_K \times \{t\} \right) = t -\tau \, \mod \, \mathbb{Z}$, which sends $h(x_\bullet)$ to $\mathbf{0}$, which we regard as the base point in $X^{K}_1.$ Again, we can complete this to a flow expansion $(\mX_2,h(x_\bullet))\approx\underleftarrow{\lim}\{X_1^{(2)} \xleftarrow{g_1} X_2^{(2)}\xleftarrow{g_2} X_3^{(2)} \xleftarrow{g_3} \cdots \cdots\}$. By Theorem \ref{isotopic}  and the parenthetical remark in its proof, $h$ is isotopic to a combinatorial isomorphism, whose restriction provides a conjugacy of the return map of the flow on $\mX_{i+1}$ to a clopen subset $C$ of the $p^{i+1}_1$ fibre of $\mathbf{0}$  to the return map of the flow on $S_K$ to a clopen subset of the $p^{K}_1$ fiber of $\mathbf{0}$. As the return maps of the respective flows to these clopen subsets are conjugate to the return maps of the respective homeomorphisms of which they are suspesnsions to these clopen subsets, we have that $x$ is contained in a clopen set $A_x \subset C_{i+1}$ which has a corresponding clopen $B_x \subset K$ so that the return map of $h_{i+1}$ to $A_x$ is conjugate to return map of $\cR_K$ and hence $h_i$ to $B_x.$ Notice that by restriction of the conjugating homeomorphism,  for any (not necessarily clopen) neighborhood $N\subset A_x$  the return map to $N$ will be conjugate to the return map to a neighbourhood of $B_x.$ A Lebesgue number $\delta_K>0$ for the open cover of $C_{i+1}$ given by $\left\{\, A_x\,|\, x\in C_{i+1}\,\right\}$ then provides us with the required number as in Definition \ref{germinal} as any neighbourhood in $C_{i+1}$ of diameter less than $\delta_K$ will be contained in some $A_x.$

\end{proof}

%%%%%%%%%%%%%%%%
%%%%%%%%%%%%%%%%
%%%%%%%%%%%%%%%%
%%%%%%%%%%%%%%%%
%%%%%%%%%%%%%%%%
%%%%%%%%%%%%%%%%

\section{Complete invariant for pointed flow equivalence}\label{pointedsection}

In this section we introduce the complete invariant for pointed flow equivalence. The invariant will consist of equivalence classes of inverse sequences from the following category.
\begin{defn}\label{freep}
$\cP_F$ is the category with
\begin{itemize} 
\item Objects:  ordered pairs $(F_n, \{g_1,\dots,g_n\})$ consisting of a finitely generated free group $F_n$ of rank $n$, together with a chosen set of generators $\{g_1,\dots,g_n\}$ \emph{(positive generators)}, and 
\item Morphisms: $(F_m, \{h_1,\dots,h_m\})\to (F_n, \{g_1,\dots,g_n\})$ are  \emph{positive homomorphisms} consisting of homomorphisms  $F_m \to F_n$ which map the semigroup generated by $\{h_1,\dots,h_m\}$ into the semigroup generated by $\{g_1,\dots,g_n\}.$  
\end{itemize}
A positive homomorphism is \emph{proper} or \emph{primitive} if the corresponding symbolic map on the alphabets $\{h_1,\dots,h_m\}$ and $\{g_1,\dots,g_n\}$ is proper or primitive respectively. 

An element in $(F_n, \{g_1,\dots,g_n\})$ is \emph{positive} if it is in the semigroup generated by $\{g_1,\dots,g_n\}$ and it is \emph{negative} if it is in the semigroup generated by $\{g^{-1}_1,\dots,g^{-1}_n\}.$ The identity element is both positive and negative. 
\end{defn}

While we are referring to elements as positive and negative, we are not claiming this to be part of an ordering of the group in the formal sense.
For brevity, we will also denote an element of $\cP_F$ as $(F_n, \cG)$, where $\cG$ denotes the chosen set of generators.

Now we shall define a functor $\Pi$ from  $\overline{\cW}$ to  $\overline{\cP_F}$ that will ultimately give us the pointed invariant. 

\begin{defn}
Given $(X_i,f_i)$ in $\overline{\cW},$ we denote by $\Pi \left( (X_i,f_i) \right)$ the inverse sequence
%\noindent 
$\left( (G_i,\{g(i)_1,\dots,g(i)_{k_i}\}), \pi_1(f_i) \right)$ in $\overline{\cP_F}$, where $G_i = \pi_1(X_i,x_i)$ and $g(i)_j$ is the homotopy class of a loop in $X_i$ based at $x_i$ that goes around the circle $X^i_j$ one time in the direction of the flow. 

For a morphism $\mu$ from $\left( X_i,f_i \right)\to \left( Y_i,g_i \right)$ in $\overline{\cW}$, we write $\Pi(\mu)$ for the morphism in $\overline{\cP_F}$  obtained by applying $\pi_1$ to all of the positive maps in $\cW$ in the ladder of the diagram representing $\mu.$
\end{defn}

Note that  the map $\pi_1(f_i)$ acts on the chosen sets of generators just as the symbolic representation of $f_i$ does on the alphabets given by the chosen generators.

\begin{lemma}\label{Pi}
If there is a flow equivalence $h\colon (\mX,x_\bullet) \to (\mY,y_\bullet) $, then for any expansions $(\mX,x_\bullet)\approx\eX$ and $(\mY, y_\bullet )\approx\eY$ we have that  $\Pi \left( (X_i,f_i) \right)$  is isomorphic to  $\Pi \left( (Y_i,g_i) \right)$ in $\overline{\cP_F}$.
\end{lemma}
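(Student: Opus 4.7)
The plan is to reduce the lemma essentially to a direct application of the rigidity theorem, Theorem \ref{isotopic}, combined with the functoriality of $\Pi$.

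First I would invoke Theorem \ref{isotopic}: given the flow equivalence $h\colon(\mX,x_\bullet)\to(\mY,y_\bullet)$ and the chosen expansions $(\mX,x_\bullet)\approx\eX$ and $(\mY,y_\bullet)\approx\eY$, that theorem produces an isotopic combinatorial isomorphism between these particular expansions. By Definition \ref{gencombiso}, this is witnessed by a commutative ladder diagram in $\cW$ with downward maps $d_i\colon X_{m_i}\to Y_{n_i}$ and upward maps $u_i\colon Y_{n_i}\to X_{m_{i+1}}$, all of which are positive maps preserving wedge points. In particular, these maps take wedge point to wedge point and can therefore legitimately be fed into the based fundamental group functor.

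Next I would apply $\Pi$ object-wise and morphism-wise to this diagram. Each $X_{m_i}$ becomes the free group $\pi_1(X_{m_i},x_{m_i})$ equipped with its distinguished positive generators given by the oriented circles of $X_{m_i}$, and similarly for the $Y_{n_i}$. The bonding maps $f_{m_i,m_{i+1}}$ and $g_{n_i,n_{i+1}}$ are positive maps in $\cW$, so by the observation that $\pi_1$ of a positive map acts on the distinguished generators exactly as the associated symbolic representation does on alphabets, they become positive homomorphisms in $\cP_F$. Likewise $\pi_1(d_i)$ and $\pi_1(u_i)$ are positive homomorphisms. Commutativity of the original diagram in $\cW$ passes to commutativity in $\cP_F$ because $\pi_1$ is a functor on the category of based topological spaces and continuous maps, and all the squares and triangles in Definition \ref{gencombiso} are based.

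This yields precisely a commutative diagram of the form required by the paper's definition of an isomorphism of inverse sequences, now in $\overline{\cP_F}$ rather than $\overline{\cW}$, witnessing that $\Pi((X_i,f_i))$ and $\Pi((Y_i,g_i))$ are isomorphic. There is no genuine obstacle here: the content of the lemma is entirely carried by Theorem \ref{isotopic}, and the remaining work is the verification that $\Pi$ is a functor from $\overline{\cW}$ to $\overline{\cP_F}$ which preserves the notion of isomorphism of inverse sequences. The mildly subtle point worth noting explicitly is that the positivity of the vertical maps $d_i,u_i$ in the combinatorial isomorphism is exactly what ensures the vertical homomorphisms in the resulting diagram land in $\cP_F$ (and not merely in the category of free groups), so that the isomorphism is in $\overline{\cP_F}$ as claimed.
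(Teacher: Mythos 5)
Your proposal is correct and is essentially the paper's own proof: invoke Theorem \ref{isotopic} to get a combinatorial isomorphism (a commutative zigzag of positive, wedge-point-preserving maps) between the two expansions, then apply $\Pi$ and use functoriality of $\pi_1$ together with the fact that positive maps induce positive homomorphisms. The only quibble is a harmless indexing slip in your description of the upward maps (they go $Y_{n_{i+1}}\to X_{m_i}$ in Definition \ref{gencombiso}); otherwise the argument matches the paper's.
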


\begin{proof}
Suppose $h\colon (\mX,x_\bullet) \to (\mY,y_\bullet) $ is a flow equivalence and we are given corresponding expansions. Then by Theorem \ref{isotopic}  the inverse sequences $ (X_i,f_i) $  and $(Y_i,g_i) $ are isomorphic in $\overline{\cW}$. It then follows that by applying $\Pi$ to the isomorphism of  $(X_i,f_i) $  and $(Y_i,g_i) $ represented by the zigzag diagram directly yields an isomorphism between  $\Pi \left( (X_i,f_i) \right)$ and  $\Pi \left( (Y_i,g_i) \right)$ in $\overline{\cP_F}$.
\end{proof}
Observe that while we are not requiring minimality, the lemma is vacuous if either $\mX$ does not admit an expansion about $x_\bullet$ or $\mY$ does not admit an expansion about $y_\bullet$.

As a result of this lemma, the following notion is well-defined.

\begin{defn}
If the flow space $\mX$ admits an expansion about $x_\bullet,$ then $\p\left(\mX,x_\bullet\right)$ is the isomorphism class in $\overline{\cP_F}$ of $\Pi \left( (X_i,f_i) \right)$ for any sequence $ (X_i,f_i)$ from an expansion about $x_\bullet.$
\end{defn}

\begin{theorem}\label{mainpointedtheorem}
Suppose the flow spaces $\mX$ and $\mY$ admit expansions about the points $x_\bullet$ and $y_\bullet$ respectively. Then there is a flow equivalence $(\mX,x_\bullet) \to (\mY,y_\bullet)$ if and only if $\p(\mX,x_\bullet) = \p(\mY,y_\bullet).$
\end{theorem}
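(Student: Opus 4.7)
The forward implication is exactly the content of Lemma \ref{Pi}: a pointed flow equivalence induces an isomorphism of the inverse sequences in $\overline{\cW}$ underlying any chosen pair of expansions (via Theorem \ref{isotopic}), and applying the functor $\Pi$ transfers this to an isomorphism in $\overline{\cP_F}$, so $\p(\mX,x_\bullet) = \p(\mY,y_\bullet)$.

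For the converse, the plan is to construct an intermediate pointed flow space $(\mZ, z_\bullet)$ that is pointed flow equivalent to both $(\mX, x_\bullet)$ and $(\mY, y_\bullet)$, and then compose. Fix expansions $(\mX,x_\bullet) \approx \eX$ and $(\mY, y_\bullet ) \approx \eY$. The hypothesis $\p(\mX,x_\bullet) = \p(\mY,y_\bullet)$ supplies a zig-zag isomorphism in $\overline{\cP_F}$, built from positive homomorphisms $d_i^* \colon \pi_1(X_{m_i}) \to \pi_1(Y_{n_i})$ and $u_i^* \colon \pi_1(Y_{n_{i+1}}) \to \pi_1(X_{m_i})$ satisfying the triangle relations $u_i^* \circ d_{i+1}^* = (f_{m_i,m_{i+1}})_*$ and $d_i^* \circ u_i^* = (g_{n_i, n_{i+1}})_*$. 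I would then read each positive homomorphism as a symbolic map on the alphabets furnished by the specified positive generators, so that the entire zig-zag assembles into a single alternating symbolic sequence $\cS$ whose terms alternate between the alphabets attached to $X_{m_i}$ and $Y_{n_i}$.

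The core step is to feed $\cS$ into Theorem \ref{symbolicexpansion}. Every composition of two consecutive bonding maps in $\cS$ agrees symbolically with a bonding map of $(X_i,f_i)$ or of $(Y_i,g_i)$, so $\cS$ inherits properness (in the aperiodic case) or, via Lemma \ref{periodic}, degenerate properness; hence $\cS$ represents a flow space $\mZ$ with a basepoint $z_\bullet$ coming from the wedge-point fibre. Now telescoping $\cS$ down to its $X_{m_i}$-terms recovers exactly a telescoping of the symbolic expansion of $(\mX, x_\bullet)$, and telescoping to its $Y_{n_i}$-terms recovers a telescoping of the symbolic expansion of $(\mY, y_\bullet)$. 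Two applications of Lemma \ref{seq rep equiv} then produce pointed flow equivalences $(\mZ, z_\bullet) \to (\mX, x_\bullet)$ and $(\mZ, z_\bullet) \to (\mY, y_\bullet)$, and composing them gives the desired pointed flow equivalence.

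The step I expect to be the main obstacle is verifying that the combined symbolic sequence $\cS$ genuinely satisfies the hypotheses of Theorem \ref{symbolicexpansion}: one must confirm that the compositions $u_i^* \circ d_{i+1}^*$ agree with the symbolic data of $f_{m_i,m_{i+1}}$ exactly on the chosen positive generators rather than merely up to some innocuous-looking substitution, so that properness (or degenerate properness) of the $X$-side expansion really does transfer to $\cS$. This is where the functorial definition of $\cP_F$ pays off, since morphisms in $\cP_F$ are determined by their values on positive generators. A secondary bookkeeping matter is to ensure that $z_\bullet$ lines up, under each telescoping, with the wedge-point fibres governing the expansions of $\mX$ and $\mY$; this is automatic because every arrow in $\cW$ carries wedge point to wedge point, so the basepoint canonically attached to the expansion of $\mZ$ descends along both telescopings to the appropriate base points, and the two applications of Lemma \ref{seq rep equiv} produce genuinely pointed equivalences.
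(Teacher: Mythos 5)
Your forward direction is exactly the paper's: Lemma \ref{Pi} (itself resting on Theorem \ref{isotopic}) gives $\p(\mX,x_\bullet)=\p(\mY,y_\bullet)$.

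Your converse is, at heart, the paper's argument presented in the ``interwoven sequence'' form rather than the ``two-row zigzag'' form (the Background section notes these are equivalent descriptions of an isomorphism in $\overline{\cW}$). The paper translates the algebraic zigzag into a commutative zigzag in $\overline{\cW}$ between sequences $(X'_i,f'_i)$ and $(Y'_i,g'_i)$ having the same symbolic representations as the given expansions, obtains the middle pointed flow equivalence directly from that commutative zigzag (it determines mutually inverse maps $d$ and $u$ of the limits, as in the discussion after Definition \ref{gencombiso}), and then applies Lemma \ref{seq rep equiv} twice to connect those limits to $(\mX,x_\bullet)$ and $(\mY,y_\bullet)$. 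You instead splice the zigzag into a single alternating sequence $\cS$, realise it as a space $\mZ$ via Theorem \ref{symbolicexpansion}, and telescope. The verification you single out as the main obstacle is indeed the crux shared by both proofs, and it goes through for the reason you give: the triangles of a pro-isomorphism in $\overline{\cP_F}$ commute on the nose, a morphism of $\cP_F$ is determined by its values on the positive generators, and the symbolic representation is precisely that tuple of values; hence $u_i^*\circ d_{i+1}^*$ and $(f_{m_i,m_{i+1}})_*$ have identical symbolic data, and properness of a composite follows from properness of any factor.

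The one place your packaging is genuinely weaker than the paper's is the non-aperiodic case, which the theorem does cover (it assumes only that expansions exist about the given points). Theorem \ref{symbolicexpansion} realises only \emph{proper} sequences, and Lemma \ref{periodic} does not do the job you assign to it: it detects a periodic orbit inside a flow space already known to exist, but it does not assert that a degenerate proper symbolic sequence is realised by a flow space. So in the periodic case your construction of $\mZ$ is not justified by the results you cite. The fix is either to prove a degenerate-proper analogue of Theorem \ref{symbolicexpansion}, or simply to follow the paper and take the middle homeomorphism from the commutative zigzag in $\overline{\cW}$ itself, which avoids the realisation theorem entirely.
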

\begin{proof}
If there is a flow equivalence $(\mX,x_\bullet) \to (\mY,y_\bullet)$ then it follows from Lemma \ref{Pi} that $\p(\mX,x_\bullet) = \p(\mY,y_\bullet).$ 

Conversely, suppose  $\p(\mX,x_\bullet) = \p(\mY,y_\bullet)$. Let
$(\mX,x_\bullet)\approx\eX$ and $(\mY, y_\bullet )\approx\eY$ be expansions.  By our hypothesis there is an isomorphism between $\Pi\left( (X_i,f_i)\right)$ and $\Pi\left( (Y_i,g_i)\right)$. The zigzag diagram representing this isomorphism translates directly to a commutative zigzag diagram in $\overline{\cW}$ between inverse sequences $(X'_i,f'_i)$  and $(Y'_i,g'_i)$ in $\overline{\cW}$ which have the same symbolic representations as $(X_i,f_i)$ and $(Y_i,g_i)$ respectively. By Lemma  \ref{seq rep equiv},  there is a base point preserving flow equivalence from the limits of $(X'_i,f'_i)$  and $(Y'_i,g'_i)$ to $(\mX,x_\bullet)$ and $(\mY,y_\bullet)$ respectively, and the result follows. 

\end{proof}

It is important to realise that if $x_\bullet,x'_\bullet \in \mX$ are not in the same orbit under the action of the homeomorphism group of $\mX,$ then Theorem \ref{mainpointedtheorem} tells us that we must have  $\p(\mX,x_\bullet) \neq \p(\mX,x'_\bullet)$, showing these objects really do depend on the base points considered. The solenoids (suspensions of adding machines) and circles are the only connected flow spaces on which the homeomorphism group acts transitively, \cite{AHO}. For other spaces there can be uncountably many different values of $\p(\mX,x_\bullet)$ as $x_\bullet$ ranges over $\mX$.

\section{The positive trope: a complete invariant}\label{mainunpointedtheorem}

As just noted, like  the fundamental group itself, $\p$ is very much dependent on the choice of base point. As the issue is with base points varying across different path components,  we clearly cannot consider paths in $\mX$ between two points of interest. However, the approximating spaces $X_i$ in an expansion \emph{are} path connected, and we can consider the images of two base points in $\mX$ in any such expansion. While there will be paths between such images in $X_i$, it is unreasonable to suppose such paths can be chosen coherently, lifting to one another, unless the two base points in $\mX$ lie in the same path component (in which case it is easily shown that values of $\p$ for each base point will be canonically isomorphic). 

This line of thought leads to the notion of the \emph{trope} as pioneered by Fox \cite{F1}. We will see that instead of conjugating by a single group element to measure the effect of a change of base point, as is the case for $\pi_1(X,x)$ for a path connected space and two possible base points $x$, we will need to conjugate by a sequence of elements. We will adjust Fox's idea of fundamental trope to take into account the orientations induced by the flow, leading us to our notion of \emph{positive trope}. 

For this part we shall  only consider minimal flow spaces so as  to avoid needing to treat different points differently. Additionally, we shall only consider aperiodic flow spaces as any minimal flow space that is not aperiodic is simply a single periodic orbit.

\subsection{The Positive Trope Relation.}

If $f\colon G \to H$ is a group homomorphism and $a\in H,$ then $\cc_a h$ denotes the homomorphism acting $\cc_ah(x)=a^{-1}h(x)a$ for $x\in G$.

\begin{defn}
The positive homomorphisms $f,g\colon G\to H$ in $\cP_F$ are {\em positive trope related}, denoted $f\s g$,  if there is an element $a\in H$ such that $f=\cc_ag$. 
\end{defn}

The following shows that $\s$ is an equivalence relation that is preserved by composition. 

\begin{lemma}\label{transcompos}
The relation $\s$ on positive homomorphisms in $\cP_F$ is reflexive, symmetric, transitive and compositive.
\end{lemma}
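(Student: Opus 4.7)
The plan is to verify the four properties directly from the definition $f\s g\iff f=\cc_a g$ for some $a\in H$, using elementary group-theoretic manipulations in the free group $H$. Since all conjugation is carried out in an ambient free group and since $f,g$ are assumed positive, once I produce the conjugating elements the positivity of all relevant maps is automatic and need not be rechecked.

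For reflexivity, I take $a=1_H$, so that $\cc_{1_H}f=f$. For symmetry, if $f=\cc_a g$, i.e.\ $f(x)=a^{-1}g(x)a$ for all $x\in G$, then $g(x)=af(x)a^{-1}=\cc_{a^{-1}}f(x)$, so the witness $a^{-1}\in H$ gives $g\s f$. For transitivity, suppose $f=\cc_a g$ and $g=\cc_b h$, with $f,g\colon G\to H$ and $a,b\in H$. Then
\[
f(x)=a^{-1}g(x)a=a^{-1}b^{-1}h(x)ba=(ba)^{-1}h(x)(ba),
\]
so $f=\cc_{ba}h$ and hence $f\s h$.

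The only slightly more delicate point is compositivity: I interpret this as saying that whenever $f_1\s f_2\colon G\to H$ and $g_1\s g_2\colon H\to K$ (with all four maps positive homomorphisms in $\cP_F$), one has $g_1\circ f_1\s g_2\circ f_2\colon G\to K$. Writing $f_1=\cc_a f_2$ and $g_1=\cc_b g_2$ with $a\in H$, $b\in K$, I apply $g_1$ to $f_1(x)$ and push the conjugation through $g_2$:
\[
g_1(f_1(x))=b^{-1}g_2\!\bigl(a^{-1}f_2(x)a\bigr)b=b^{-1}g_2(a)^{-1}\,g_2(f_2(x))\,g_2(a)b=c^{-1}(g_2\circ f_2)(x)\,c,
\]
where $c=g_2(a)b\in K$. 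Thus $g_1\circ f_1=\cc_c(g_2\circ f_2)$, as required.

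The main (very mild) obstacle is simply tracking where the conjugating element of the composite lives and ensuring that pushing $a$ through $g_2$ produces a well-defined element $g_2(a)\in K$; this is immediate since $g_2$ is a group homomorphism $H\to K$. No positivity hypothesis on $a$ or $b$ is needed, because $\s$ is defined without any positivity condition on the conjugating element, and the positivity of the composite $g_1\circ f_1$ (and $g_2\circ f_2$) follows from the closure of positive homomorphisms under composition in $\cP_F$.
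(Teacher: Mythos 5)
Your proof is correct and follows essentially the same elementary computations as the paper's: $a=1$ for reflexivity, $a^{-1}$ for symmetry, and $ba$ for transitivity. The only cosmetic difference is in compositivity, where you verify the two-sided form $g_1\circ f_1\s g_2\circ f_2$ with conjugator $g_2(a)b$, while the paper treats pre-composition and post-composition separately (with conjugators $a$ and $s(a)$ respectively); the two formulations are equivalent given transitivity, so this is not a genuinely different route.
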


\begin{proof}

By choosing $a=1$ we see $f\s f$. Likewise, if $f\s g$ by $f=\cc_ag$ then $g=\cc_{a^{-1}}f$ which proves symmetry. For transitivity, if $f=\cc_ag$ and $g=\cc_bh$ then
$$f(x)\ =\ a^{-1}g(x)a\ =\ a^{-1}b^{-1}h(x)ba\ =\ (ba)^{-1}h(x)(ba)\,.$$
For compositive, suppose $f=\cc_ag\colon G\to H$ and let $r\colon K\to G$ and $s\colon H\to L$ be homomorphisms. Then for $x\in G$ and $y\in K$,
$$fr(y)\ =\ a^{-1}(gr)(y) a\qquad\mbox{ and }\qquad sf(x)\ =\ s(a^{-1}g(x)a)\ =\ (s(a))^{-1}(sg)(x) (s(a))\,.$$
\end{proof}

While we cannot require the conjugating element $a$ to be positive by the symmetry condition, we do have the following.

\begin{lemma}\label{plusorminus}
Suppose $f,g\colon G\to H$ are positive homomorphisms in $\cP_F$ satisfying $f=\cc_a g$. Then $a\in H$ is either  positive or  negative.
\end{lemma}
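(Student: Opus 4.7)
The plan is to fix any positive generator $h$ of $G$ and write $W=g(h)$ and $W'=f(h)$. Since positive homomorphisms in $\cP_F$ send the positive semigroup into the positive semigroup, and the underlying positive maps in $\cW$ are surjective, both $W$ and $W'$ are nonempty positive words in $H$; in particular both are cyclically reduced (no positive letter can cancel against a positive letter). The hypothesis $f=\cc_a g$ evaluated at $h$ yields the conjugation equation $W'=a^{-1}Wa$ inside the free group $H$.

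I would then invoke the classical structure of conjugacy classes in a free group. Two cyclically reduced elements are conjugate precisely when they are cyclic rearrangements of each other, and the set of conjugators forms a single right coset of the centralizer $C_H(W)=\langle w\rangle$, where $w$ is the primitive root of $W$ in $H$. Thus there exists a factorisation $W=uv$ with $W'=vu$, so that $u^{-1}Wu=W'$, and the complete solution set of $a^{-1}Wa=W'$ is $\{\,w^{-n}u : n\in\mZ\,\}$. Note that $w$ itself is a positive word: otherwise some letter of $w$, and hence of $W=w^k$, would be negative, contradicting the positivity of $W$.

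The final step is a direct case analysis showing that every element of the solution set $\{w^{-n}u\}$ is purely positive or purely negative. Write $u=w^{q}u'$ with $q\geq 0$ and $u'$ a (possibly empty) proper prefix of $w$, and correspondingly factor $w=u'v'$ with $v'$ a positive suffix of $w$. Then $a=w^{q-n}u'$. If $q\geq n$ then $a=w^{q-n}u'$ is a prefix of a positive power of $w$ and is thus a positive word. If $q<n$, setting $m=n-q>0$, direct expansion gives
\[
a \;=\; w^{-m}u' \;=\; \bigl((v')^{-1}(u')^{-1}\bigr)^{m}u' \;=\; (v')^{-1}(u')^{-1}\cdots (v')^{-1}(u')^{-1}\cdot u',
\]
and the trailing $(u')^{-1}u'$ cancels, leaving an alternating product of the negative blocks $(v')^{-1}$ and $(u')^{-1}$, i.e.\ a purely negative word. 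The only technical step requiring care is precisely this last calculation: verifying that no unanticipated further cancellation occurs and in particular that no positive letter survives. This reduces to the fact that $w=u'v'$ is itself reduced (so that $(v')^{-1}$ and $(u')^{-1}$ concatenate without cancellation), together with the cancellation of the final $(u')^{-1}u'$ at the tail, which is the only interaction between positive and negative letters.
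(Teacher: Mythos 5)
Your proof is correct, but it takes a genuinely different route from the paper's. The paper argues directly on the reduced form of the conjugator: it writes $a$ as an alternating product $p_1n_1\cdots p_rn_r$ of positive and negative blocks and shows, by tracking how cancellation must propagate in $a^{-1}w_ia$ for the result to be a positive word, that $r=1$, so $a$ is a single positive or single negative block. You instead import the classical structure theory of conjugacy in free groups --- cyclically reduced conjugates are cyclic permutations of one another, and the conjugators carrying $W$ to $W'$ form a coset $\langle w\rangle u$ of the cyclic centralizer generated by the primitive root --- and then verify by hand that every element $w^{q-n}u'$ of that coset is purely positive or purely negative. The paper's argument is more elementary and self-contained; yours buys a complete parametrization of \emph{all} admissible conjugators, which is essentially the information the paper only extracts afterwards in Corollary \ref{cor2dim} and Lemma \ref{lem1dim} (that $a$ is a prefix or suffix of the images of the generators, or of the form $w^nv$), so your approach proves the lemma and its refinements in one pass. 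Two minor points. First, you should explicitly choose a generator $h$ with $g(h)\neq 1$: positive homomorphisms in $\cP_F$ are not required to arise from surjective maps in $\cW$, so your appeal to surjectivity is not quite available at this level of generality; the fully degenerate case where $g$ kills every generator makes the statement false as literally stated, but this is an implicit standing assumption the paper's own proof shares, and a nontrivial $g(h)$ is all your argument needs. Second, your final cancellation check is indeed the only delicate step, and it is sound: $w=u'v'$ is a positive (hence reduced) word, so $(v')^{-1}(u')^{-1}$ is cyclically reduced, its powers involve no cancellation, and the only interaction between negative and positive letters is the tail cancellation of $(u')^{-1}$ against $u'$, leaving a word entirely in negative letters.
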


\begin{proof}
Suppose $\{e_i\}=\cG$ are the chosen positive generators of $G$ and let $w_i=g(e_i)$. As $g$ is positive, these are positive elements of $H$. As $f$ is also positive, $f(e_i)=\cc_ag(e_i)=a^{-1}w_ia$ must be positive for all $i$. Suppose, after all possible cancelling,
$$
a=p_1n_1\cdots p_rn_r\qquad\mbox{ where one or both of $p_1$ and $n_r$ may be 1}
$$
and where the $p_j$ are positive, and the $n_j$ negative. Then we must have 
$$\cc_a(w_i)\ =\ n_r^{-1}p_r^{-1}\cdots n_1^{-1}p_1^{-1}w_ip_1n_1\cdots p_rn_r$$
a positive word.

If $p_1\not=1$ then, looking at the last $2r-1$ letters,  $(n_1\cdots p_rn_r)=1$ since otherwise the element $\cc_a(w_i)$ could not be positive. As we were writing $a$ in its lowest terms, $r$ must therefore be 1 and $a=p_1$. 

On the other hand, for similar reasons, if $n_r\not=1$ then $(p_1n_1\cdots p_r)=1$. So again $r=1$ and in this case $a=n_1$. 
%If both $p_1$ and $n_r=1$, then
%\begin{equation}\label{CaseB}
%a=n_1p_2\cdots n_{r-1}p_r\qquad\mbox{ where one or both of $n_1$ and $p_{r-1}$ may be 1}
%\end{equation}
%Then we  have 
%$$\cc_a(w_i)\ =\ p_r^{-1}n_{r-1}^{-1}\cdots p_2^{-1}n_1^{-1}w_in_1p_2\cdots n_{r-1}p_r$$
%which must be positive. As before, if $n_1\not=1$ then $(p_2\cdots n_{r-1}p_r)=1$, and if $p_r\not=1$ then $(n_1p_2\cdots n_{r-1})=1$. If both $n_1$ and $p_r=1$ then we are back to case (\ref{CaseA}) and the result follows with a finite iteration.
\end{proof}

In the following, for positive words $x,y$ we say $x$ is a {\em prefix} of $y$ if $y=xt$ for some positive word $t$, and $x$ is a {\em suffix} of $y$ if $y=sx$ for some  positive word $s$. We include the possibility that $s$ or $t$ may be the trivial word. In the case that $t\not=1$, say $x$ is a {\em strict prefix\/} of $y$, and if $s\not=1$, a {\em strict suffix\/} of $y$.

The argument above shows that the only $\cc_a$ that can preserve positivity acts on each word $w_i$ by a cyclic permutation of its letters.  

Depending on the exact nature of the set of words $w_i$ (the image under $g$ of the positive generators of $G$), more can be said. For example

\begin{cor}\label{cor2dim}
Let $f,g\colon G\to H$ be  positive homomorphisms in $\cP_F$ related by $f=\cc_a g$. Denoting $w_i=g(e_i)$ as above, if $w_i\not=w_j$ for some $i,j$ where neither $w_i$ nor $w_j$ are suffixes or prefixes of the other, then 
\begin{itemize}
\item If $a$ is positive, then $a$ is a prefix of \emph{every} $w_i$. Then $\cc_a(w_i)$ is obtained from $w_i$ by removing the prefix $a$ and adding it as a suffix.
\item If $a$ is negative, then $a^{-1}$ is a suffix of \emph{every} $w_i$.Then $\cc_a(w_i)$ is obtained from $w_i$ by removing the suffix $a^{-1}$ and adding it as a prefix.
\hfill $\square$
\end{itemize}
\end{cor}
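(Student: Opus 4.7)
By Lemma~\ref{plusorminus} I may assume that $a$ is either positive or negative; the two cases are exchanged under a left--right reversal of words, so I carry out the positive case in detail and indicate the negative case by duality. The first task is a dichotomy for each generator $e_k$ of $G$. Since $f=\cc_a g$ is positive, the word $u_k:=\cc_a(w_k)=a^{-1}w_ka$ is positive, and rearranging yields the identity $au_k=w_ka$ between two positive words in $H$. Comparing reduced lengths forces $|u_k|=|w_k|$, and then the uniqueness of the positive-generator expression of each side of $au_k=w_ka$ gives the key dichotomy: either $a$ is a prefix of $w_k$, or $w_k$ is a strict prefix of $a$.

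Next, I exploit the hypothesis on $w_i,w_j$ to eliminate the second alternative. Suppose for contradiction that $w_i$ is a strict prefix of $a$. Applying the dichotomy to $w_j$: if $a$ is a prefix of $w_j$, then $w_i\subsetneq a\subseteq w_j$, making $w_i$ a prefix of $w_j$; if instead $w_j$ is also a strict prefix of $a$, then $w_i$ and $w_j$ are two positive prefixes of the single positive word $a$, forcing one to be a prefix of the other. Both conclusions violate the hypothesis, so $a$ is a prefix of $w_i$, and by the symmetric argument also of $w_j$. For any remaining generator $w_k$, the analogous elimination---using the inclusion $a\subseteq w_i$ just obtained to derive a contradiction if $w_k$ were a strict prefix of $a$---yields $a$ as a prefix of every $w_k$.

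Writing $w_k=av_k$ with $v_k$ a (possibly empty) positive word, the direct computation
\[
\cc_a(w_k)\;=\;a^{-1}(av_k)\,a\;=\;v_k\, a
\]
exhibits $\cc_a(w_k)$ as $w_k$ with its prefix $a$ stripped off and reattached as a suffix, as claimed. The negative case proceeds by duality: writing $a=b^{-1}$ with $b$ positive, the identity $u_k\cdot b=b\cdot w_k$ produces the dichotomy in terms of suffixes, and the same hypothesis-based case analysis shows $b=a^{-1}$ is a suffix of every $w_k$; writing $w_k=s_k b$ then gives $\cc_a(w_k)=b\, s_k$, matching the stated description. The main obstacle is sharpening the Step~1 dichotomy by invoking the uniqueness of reduced positive-word expressions in the free group; once that is in place, the hypothesis-driven case analysis that eliminates the second alternative is short.
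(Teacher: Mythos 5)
Your opening dichotomy and the elimination for the distinguished pair $(w_i,w_j)$ are correct, and they run along exactly the lines the paper intends: the corollary is left unproved there, resting on the monoid identity $au_k=w_ka$ implicit in the proof of Lemma~\ref{plusorminus} and on the cyclic-permutation remark preceding the statement. The length count, the appeal to uniqueness of positive-word expressions, the computation $\cc_a(av_k)=v_ka$, and the left--right duality for negative $a$ are all fine.

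The genuine gap is in your passage from the pair $(w_i,w_j)$ to \emph{every} generator. You claim that if some other $w_k$ were a strict prefix of $a$, then combining this with ``$a$ is a prefix of $w_i$'' derives a contradiction; but all it yields is that $w_k$ is a strict prefix of $w_i$, and the hypothesis as stated says nothing about the pair $(w_k,w_i)$ --- it only excludes prefix/suffix relations between $w_i$ and $w_j$. This step cannot be repaired under the literal hypothesis: take $H=F(x,y)$ with positive generators $x,y$, set $w_1=x^2yx$, $w_2=x^2y^2$, $w_3=x$ and $a=x^2$. Then $w_1\neq w_2$ and neither is a prefix or suffix of the other, yet $\cc_a(w_1)=yx^3$, $\cc_a(w_2)=y^2x^2$ and $\cc_a(w_3)=x$ are all positive, so $f=\cc_ag$ is a positive homomorphism; but $a=x^2$ is not a prefix of $w_3$ --- rather $w_3$ is a strict prefix of $a$, which is precisely the alternative you needed to exclude (here $a=w_3^2$, the pattern of Lemma~\ref{lem1dim}). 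So either the corollary's hypothesis must be read as applying to \emph{all} pairs of distinct words among the $w_k$ (no $w_k$ is a prefix or suffix of any other), in which case your elimination does close, since $w_k$ strictly prefixing $a$ and hence strictly prefixing $w_i$ then contradicts the universal hypothesis, or the conclusion must be restricted to the two distinguished words. As written, your ``analogous elimination'' for general $k$ asserts a contradiction that does not follow from what you have assumed.
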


At the other extreme, when there is some common word $w\in H$ such that $f(e_i)=w$ for all $i$, there is the possibility that $a$ is a longer word than $w$, which we examine in the next result. When this does happen, we can still arrange the word $a$ to be a suffix or prefix of $w$.

\begin{lemma}\label{lem1dim}
Suppose $f,g\colon G\to H$ are positive homomorphisms in $\cP_F$ related by $f=\cc_a g$ for some positive word $a$. Suppose also that there is a word $w\in H$ such that  $g(e_i)=w$ for all $i$. Then $a=w^nv$ for some positive word $v$ which is a strict prefix of $w$ and $n$ is a non-negative integer. In this case we also have $f=\cc_v g$.

Similarly, if $a$ is negative, we must have $a=uw^{-n}$ for some negative word $u$ which is a strict suffix of $w$ and again $n$ is a non-negative integer. In this case  we also have $f=\cc_u g$.
\end{lemma}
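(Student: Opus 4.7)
The proof reduces to a purely combinatorial question about when a conjugate $a^{-1}wa$ of the positive word $w$ in $H$ is itself a positive word. Since $g(e_i)=w$ for every positive generator $e_i$ of $G$, positivity of $f=\cc_a g$ is equivalent to positivity of $a^{-1}wa$. The plan is to analyse this cancellation using the longest common prefix (respectively suffix) of $a$ and $w$, and to induct on $|a|$.

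For the first part, with $a$ positive, let $p$ be the longest common prefix of $a$ and $w$ and write $a=pb$, $w=pq$ with $b,q$ positive words sharing no initial letter (or with one empty). If $p=w$, writing $a=wb'$ gives
\[
a^{-1}wa\ =\ (b')^{-1}w^{-1}\,w\,w\,b'\ =\ (b')^{-1}w\,b',
\]
so the inductive hypothesis applied to $b'$ produces $b'=w^{n'}v$ with $v$ a strict prefix of $w$, whence $a=w^{n'+1}v$. Otherwise $q\ne 1$; then after cancelling $p$ we get $a^{-1}w=b^{-1}q$, with no further cancellation between $b^{-1}$ and $q$ by the maximality of $p$. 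Consequently $a^{-1}wa=b^{-1}qpb$ is already reduced, and if $b\ne 1$ it begins with a negative letter, contradicting positivity. Hence $b=1$, i.e.\ $a=p$ is a strict prefix of $w$, corresponding to $n=0$. In either case $a=w^{n}v$ with $v$ a strict prefix of $w$. Writing $w=vu$ for positive $u$ we obtain
\[
a^{-1}wa\ =\ v^{-1}w^{-n}\,w\,w^{n}v\ =\ v^{-1}wv\ =\ uv,
\]
which matches $\cc_v g(e_i)=v^{-1}wv$. Since $f$ and $\cc_v g$ are homomorphisms agreeing on the positive generators, $f=\cc_v g$.

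The negative case is handled by the mirror argument applied to the positive element $a^{-1}$: I would analyse the longest common suffix of $a^{-1}$ and $w$, inducting on length, to obtain a factorisation of $a^{-1}$ as $v'w^{n}$ with $v'$ a strict suffix of $w$. Transcribing back in terms of the negative word $u$ of the statement then yields the claimed decomposition of $a$, and the corresponding conjugation identity reduces on generators to $\cc_a(w)=\cc_u(w)$, so $f=\cc_u g$.

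The only real subtlety is confirming that $b^{-1}qpb$ (and its suffix analogue) is already in reduced form, which is precisely what the maximality of the common prefix (respectively suffix) provides: the first letters of $b$ and $q$ must differ, ruling out any cancellation between $b^{-1}$ and $q$. Everything else is routine free-group bookkeeping, and the induction terminates because the reduction step when $w$ is a prefix of $a$ strictly decreases $|a|$.
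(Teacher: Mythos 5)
Your proof is correct and follows essentially the same route as the paper's: both reduce the claim to the positivity of the single word $a^{-1}wa$ and show this forces $a$ to be a power of $w$ followed by a strict prefix, by peeling copies of $w$ off the front of $a$ (the paper organises this as a decomposition of $a$ into blocks of length $|w|$, you as an induction on $|a|$ via the longest common prefix --- equivalent bookkeeping). One small remark: your (correct) analysis of the negative case yields $a=w^{-n}u$ rather than the $a=uw^{-n}$ written in the statement, but the paper's own phrasing of that case is equally loose and the operative conclusion $f=\cc_u g$ is unaffected.
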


\begin{proof}
We consider the case where $a$ is positive. The negative case is analogous. Suppose $r$ is the number of letters in the positive word $w$, and factor (by positive subwords, and so in lowest possible terms)
$$a\ =\ a_na_{n-1}\cdots a_1v$$
where each $a_i$ has exactly $r$ letters, and $v$ is the remainder. The integer $n$ is non-negative. Then for $\cc_a(w)$ to be positive, we require
$$\cc_a(w)\,=\ v^{-1}a_1^{-1}\cdots a_{n-1}^{-1}a_n^{-1}wa_na_{n-1}\cdots a_1v$$
to be positive. This can only happen if $a_n=w$. This product is now
$$v^{-1}a_1^{-1}\cdots a_{n-1}^{-1}a_na_{n-1}\cdots a_1v\,.$$
For this to be positive, we need $a_{n-1}=a_n$. Continuing, and  examining how the negative terms in the middle can cancel, we see we must have 
$$a_1=a_2=\cdots a_n=w$$
and so $a=w^nv$ as claimed. 

Finally, note that this computation also shows that $\cc_a(w)=\cc_v(w)$.
\end{proof}

Similar analyses can be carried out in more complex situations when various $w_i$'s are suffixes and prefixes of others.

To streamline our presentation, we introduce the following.

\begin{defn}
A \emph{primitive, proper sequence} in $\overline{\cP_F}$ is a sequence $\left( (G_i,\cG_i), f_i \right)$ for which the symbolic sequence induced by the action of the $f_i$ on the alphabets $\cG_i$ is both primitive and proper. For short, we refer to such a sequence as a \emph{\pps} and we will denote it as $G_\bullet$.
\end{defn}

By the results of Theorems \ref{symbolicexpansion} and \ref{minimal}, any expansion of an aperiodic, minimal flow space will have a corresponding inverse sequence $(X_i,f_i)$ for which $\Pi\left( (X_i,f_i) \right)$ is a \pps. Conversely, by the same results, any \pps\ will be the  $\Pi$ value of an inverse sequence derived from an expansion of an aperiodic, minimal flow space.

The positive trope relation $\s$ on homomorphisms induces an equivalence relation, also denoted $\s$, on the objects of $\cP_F$, by declaring $(G,\cG)\s(H,\cH)$ if and only if there are positive homomorphisms $f\colon G \to H$ and $g\colon H \to G$ satisfying $g\circ f \s \text{id}_G$ and $f\circ g \s \text{id}_H$. This extends to $\overline{\cP_F}$ as follows.

\begin{defn}
Two \pps's $G_\bullet$ and $H_\bullet$ are {\em positive trope equivalent}, denoted $G_\bullet\s H_\bullet$,  when there is a diagram of positive homomorphisms
\[
\begin{tikzcd}[font=\large] 
G_{m_1}  \arrow[swap]{d}{d_1} && G_{m_2}\arrow[swap]{d}{d_2}\arrow{ll}{\sigma_{m_1,\,m_2}}&&G_{m_3}\arrow{ll}{\sigma_{m_2,\,m_3}}\arrow[swap]{d}{d_3} &&  \cdots \arrow{ll}  \\ H_{n_1} && H_{n_2} \arrow{ll}{\tau_{n_1,\,n_2}}\arrow{llu}{u_1} &&H_{n_3} \arrow{ll}\arrow{ll}{\tau_{n_2,\,n_3}}\arrow{llu}{u_2}  &&\cdots \arrow{ll}   
\end{tikzcd}
\]
in which the triangles commute up to conjugation. That is, for each $m_i$ there is an element $g_i\in G_{m_i}$ such that $u_id_{i+1}=\cc_{g_i}\sigma_{m_i,m_{i+1}}$, and for each $n_j$ there is an element $h_j\in H_{n_j}$ such that $d_ju_j=\cc_{h_j}\tau_{n_j,n_{j+1}}$.  Denote by $PT(G_\bullet)$  the $\s$ equivalence class of $G_\bullet$.
\end{defn}
We will refer to such a diagram as a \emph{conjugate zigzag} diagram, or \emph{czz} for short. Observe that if $G_\bullet$ and $H_\bullet$ are isomorphic in $\overline{\cP_F}$, then $G_\bullet \s H_\bullet$: we simply choose  $g_i$ and $h_j$ to be the identity element for each $i$ and $j$. In particular, $G_\bullet$ is positive trope equivalent to any telescoped or truncated version of $G_\bullet.$ Thus, by telescoping and truncation, we can assume up to isomorphism in $\overline{\cP_F}$ that any czz takes the following simplified form:

\begin{equation}\label{redzz}
\begin{tikzcd}[font=\large] 
G_{1}  \arrow[swap]{d}{d_1} && G_{2}\arrow[swap]{d}{d_2}\arrow{ll}{\sigma_{1}}&&G_{3}\arrow{ll}{\sigma_{2,3}}\arrow[swap]{d}{d_3} &&  \cdots \arrow{ll}  \\ H_{1} && H_{2} \arrow{ll}{\tau_{1}}\arrow{llu}{u_1} &&H_{3} \arrow{ll}\arrow{ll}{\tau_{2,\,3}}\arrow{llu}{u_2}  &&\cdots \arrow{ll}   
\end{tikzcd}
\end{equation}
where $u_nd_{n+1}=\cc_{g_n}\sigma_{n,n+1}$ and $d_nu_n=\cc_{h_n}\tau_{n,n+1}$ for some elements $g_n\in G_n$ and $h_n\in H_n$. We will refer to such a diagram as a \emph{simplified czz}.

Recall that by Lemma \ref{plusorminus} the words $h_{n}$ and $k_{n}$ are either positive or negative.

This establishes that the following concept is well-defined and independent of the expansion used as two expansions about the same point lead to isomorphic sequences in $\overline{\cP_F}$.

\begin{defn}
For a minimal, aperiodic flow space $(\mX,x_\bullet)$, the {\em positive trope class}, is $PT(G_\bullet)$, where $G_\bullet$ is $\Pi\left((X_i,f_i)\right)$ for any expansion $(\mX,x_\bullet)\approx \eX$, and the positive trope class is denoted $PT(\mX,x_\bullet)$.
\end{defn}

By construction, $PT(\mX,x_\bullet)$ as a class contains $\p(\mX,x_\bullet)$, and so we see that $PT(\mX,x_\bullet)$ is an (incomplete) invariant of pointed flow equivalence. We now show that the class $PT(\mX,x_\bullet)$  is actually independent of the base point $x_\bullet$.

\begin{theorem}\label{unpointed invariance}
If $\mX$ and $\mY$ are flow equivalent, minimal, aperiodic flow spaces, then for any choices of base points $x_\bullet\in \mX$ and $y_\bullet\in \mY$, we have $PT(\mX,x_\bullet)=PT(\mY,y_\bullet)$.
\end{theorem}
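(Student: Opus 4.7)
The plan is to promote the unpointed zigzag of Theorem \ref{unpointed} to a conjugate zigzag in $\overline{\cP_F}$, where the conjugations precisely account for the failure of the vertical maps $d_n, u_n$ to preserve wedge points. Given a flow equivalence $h\colon\mX\to\mY$ together with expansions $(\mX,x_\bullet)\approx\eX$ and $(\mY,y_\bullet)\approx\eY$, I will first apply Theorem \ref{unpointed} and then telescope and re-index to obtain, up to isotopy of $h$, a commutative diagram in $\cW$ (without basepoints) of positive maps $d_n\colon X_n\to Y_n$ and $u_n\colon Y_{n+1}\to X_n$ satisfying $u_n\circ d_{n+1}=f_n$ and $d_n\circ u_n=g_n$. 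The task is then to induce homomorphisms on the $\pi_1$'s so that they assemble into a czz.

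Next, for each $n$ I will pick basepoint-correcting paths: a path $\alpha_n$ in $Y_n$ from $y_n$ to $d_n(x_n)$ running \emph{along the flow direction} within the unique circle of $Y_n$ containing $d_n(x_n)$, and analogously a flow-positive path $\beta_n$ in $X_n$ from $x_n$ to $u_n(y_{n+1})$. These will define $\pi_1(d_n)\colon \pi_1(X_n,x_n)\to\pi_1(Y_n,y_n)$ and $\pi_1(u_n)\colon \pi_1(Y_{n+1},y_{n+1})\to\pi_1(X_n,x_n)$ via the standard change-of-basepoint formula $[\gamma]\mapsto[\alpha_n\cdot(d_n\circ\gamma)\cdot\alpha_n^{-1}]$. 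The main obstacle will be verifying that both $\pi_1(d_n)$ and $\pi_1(u_n)$ are \emph{positive} homomorphisms in the sense of Definition \ref{freep}. The flow-direction choice of $\alpha_n$ is crucial here: for a positive generator $[\gamma]$ given by a once-around loop on a circle of $X_n$, the image $d_n\circ\gamma$ is traced positively, so decomposing at its visits to $y_n$ writes it as (positive arc $\alpha'$ from $d_n(x_n)$ to $y_n$) $\cdot$ (positive word $W$ in the generators of $\pi_1(Y_n,y_n)$) $\cdot$ (positive arc from $y_n$ to $d_n(x_n)$, which is precisely $\alpha_n$). Then $\alpha_n\cdot(d_n\circ\gamma)\cdot\alpha_n^{-1}=(\alpha_n\cdot\alpha')\cdot W$, and $\alpha_n\cdot\alpha'$ traces the enclosing circle of $Y_n$ once positively, so the answer is $g_j\cdot W$ for the corresponding positive generator $g_j$. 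The same argument will handle $\pi_1(u_n)$.

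Finally, I will verify the conjugation identities required by the definition of a czz. Because $u_n\circ d_{n+1}=f_n$ as unpointed maps, we have $u_n(d_{n+1}(x_{n+1}))=f_n(x_{n+1})=x_n$, so the concatenation $\beta_n\cdot(u_n\circ\alpha_{n+1})$ is a well-defined loop at $x_n$; writing its class as $c_n\in\pi_1(X_n,x_n)$, direct substitution into the change-of-basepoint formulas yields $\pi_1(u_n)\circ\pi_1(d_{n+1})=\cc_{c_n^{-1}}\pi_1(f_n)$. An entirely parallel calculation, using $d_n(u_n(y_{n+1}))=g_n(y_{n+1})=y_n$, will give $\pi_1(d_n)\circ\pi_1(u_n)=\cc_{e_n^{-1}}\pi_1(g_n)$ for a corresponding loop class $e_n\in\pi_1(Y_n,y_n)$. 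This is exactly the data of a conjugate zigzag between $\Pi((X_i,f_i))$ and $\Pi((Y_i,g_i))$, so these \pps's are positive trope equivalent and $PT(\mX,x_\bullet)=PT(\mY,y_\bullet)$ as claimed. Lemma \ref{plusorminus} is not strictly needed for this direction, but it guarantees \emph{a posteriori} that the conjugating classes $c_n,e_n$ are positive or negative, consistent with the intrinsic structure of $\overline{\cP_F}$.
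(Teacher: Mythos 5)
Your proposal is correct and follows essentially the same route as the paper's own proof: invoke Theorem \ref{unpointed}, correct the basepoints with paths chosen along the flow direction (your $\alpha_n$ is exactly the reverse of the paper's $p_n$), check that the induced homomorphisms remain positive, and verify that the triangles commute up to conjugation by the loop classes arising from the concatenated correcting paths. The only quibble is terminological: the vertical maps $d_n,u_n$ from Theorem \ref{unpointed} are not positive maps in the sense of $\cW$ since they need not preserve wedge points, which is precisely the defect your path choices repair.
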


\begin{proof}
Suppose there is a flow equivalence $\mX \to \mY$. By Theorem \ref{unpointed} there are expansions $(\mX,x_\bullet)\approx \eX$ and  $(\mY,y_\bullet)\approx \eY$ for which we have a commutative diagram of maps representing a flow equivalence
\[
\begin{tikzcd}[font=\large] 
X_{1}  \arrow[swap]{d}{d_1} && X_{2}\arrow[swap]{d}{d_2}\arrow{ll}{f_{1}}&&X_{3}\arrow{ll}{f_{2,3}}\arrow[swap]{d}{d_3} &&  \cdots \arrow{ll}  \\ Y_{1} && Y_{2} \arrow{ll}{g_{1}}\arrow{llu}{u_1} &&Y_{3} \arrow{ll}\arrow{ll}{g_{2,\,3}}\arrow{llu}{u_2}  &&\cdots \arrow{ll}   
\end{tikzcd}
\]
where the maps $d_n$ and $u_n$ do not necessarily preserve base points but do preserve the direction of the partial flows. To define the homomorphisms $(d_n)_*\colon\pi_1(X_n,x_n)\to \pi_1(Y_n,y_n)$ and $(u_n)_*\colon\pi_1(Y_{n+1},y_{n+1})\to \pi_1(X_n,x_n)$ we must choose paths $p_n$ and $q_n$ where $p_n$ is a path in $Y_n$ from $d_n(x_n)$ to $y_n$ and $q_n$ is a path in $X_n$ from $u_n(y_{n+1})$ to $x_n$. As is classically known, the homomorphism $(d_n)_*$ is then given on a homotopy class of loop $\lambda$ in $X_n$ based at $x_n$ by $(d_n)_*[\lambda]=[p_n^{-1}\cdot d_n(\lambda)\cdot p_n]$,\footnote{The notation here is meant to indicate that the image of the homotopy class of the loop $\lambda$ under the homomorphism $(d_n)_*$ is the homotopy class of the loop that starts at $y_n$, goes along the path $p_n$ backwards to $d_n(x_n)$, then describes the loop $d_n(\lambda)$ and finally returns to $y_n$ by $p_n$.} and likewise for $u_n$ using the path $q_n$. These homomorphisms depend on the homotopy classes of the paths $p_n$ and $q_n$, and choices of different paths will  give homomorphisms that differ by a conjugation. We also wish that the homomorphisms $(u_n)_*$ and $(d_n)_*$ are positive, so we make the choice of path $p_n$ that travels from $d_n(x_n)$ to $y_n$ in the negative\footnote{In fact we could equally have chosen the path $p_n$ to go between those points via the \emph{positive} direction, and likewise for $q_n$. For some specific constructions there can even  be merit in choosing say  the $p_n$'s to be positive and the $q_n$'s negative, or vice-versa.} direction around the circle on which $d_n(x_n)$ lies, and likewise for $q_n$. If $d_n(x_n)=y_n$ we  just choose $p_n$ as the trivial path, etc. This ensures that if $\lambda$ is a positive loop in $X_n$, then $(d_n)_*[\lambda]$ is also positive, and similarly for $(u_n)_*$.

Explicitly, for $[\lambda]\in\pi_1(Y_{n+1},y_{n+1})$
$$\begin{array}{rl}
(d_n)_*(u_n)_*[\lambda]\ =&[p_n^{-1}\cdot (d_n(q_n))^{-1}\cdot (d_n u_n(\lambda))\cdot (d_n(q_n))\cdot p_n]\\
=&[r_n^{-1}\cdot (g_{n,n+1}(\lambda))\cdot r_n]
\end{array}$$
where $r_n=(d_n(q_n))\cdot p_n$ originates at $d_n(u_n(y_{n+1}))=y_n$ and terminates where $p_n$ terminates, $y_n$. Thus, $r_n$ is a loop based at $y_n$ and 
$$(d_n)_*(u_n)_*\ =\ \cc_{[r_n]}(g_{n,n+1})_*$$
as required. For the upper triangles the calculation is similar. %As the maps $d_i$ and $u_i$ preserve the direction of the flows, we see that the resulting maps are positive. 
Thus the triangles commute up to conjugation and altogether form a czz diagram. Hence $PT(\mX,x_\bullet)=PT(\mY,y_\bullet)$.

\end{proof}

If we take a single flow space $\mX$, but with two choices of base point, $x_\bullet$ and $x_\bullet'$ say, then the identity map on $\mX$ is a flow equivalence, and so $PT(\mX,x_\bullet)=PT(\mX,x_\bullet')$. 

\begin{cor}
For a minimal, aperiodic flow space $\mX$, $PT(\mX,x_\bullet)$ is independent of the choice of base point $x_\bullet$. 
\end{cor}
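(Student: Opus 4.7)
The plan is to derive this corollary as an immediate specialization of Theorem \ref{unpointed invariance}, the main work having already been done there. The key observation is that the theorem is stated for \emph{any} choice of base points in the source and target of a flow equivalence; it does not require that the flow equivalence carries a preferred point in $\mX$ to a preferred point in $\mY$.

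First, I would fix a minimal, aperiodic flow space $\mX$ and two points $x_\bullet, x_\bullet' \in \mX$. I then take $\mY = \mX$ and use the identity map $\mathrm{id}\colon \mX \to \mX$, which is trivially a flow equivalence. Applying Theorem \ref{unpointed invariance} to this flow equivalence with the base points $x_\bullet$ on the source side and $x_\bullet'$ on the target side yields $PT(\mX,x_\bullet) = PT(\mX,x_\bullet')$, which is the claim. Since minimality ensures (via the corollary to Theorem \ref{expansionexists}) that expansions about both $x_\bullet$ and $x_\bullet'$ exist, both positive trope classes are well defined, and the argument applies without further conditions.

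There is essentially no obstacle: the substantive content has been absorbed into Theorem \ref{unpointed}, whose conclusion allowed the proof of Theorem \ref{unpointed invariance} to be carried out without requiring that the combinatorial representatives $d_n, u_n$ preserve the wedge points. The ``hard part'' was already navigated in that earlier proof, specifically in the careful choice of negatively-oriented paths $p_n$ and $q_n$ used to define the basepoint-change homomorphisms $(d_n)_*$ and $(u_n)_*$ as positive homomorphisms, so that the triangles of the resulting diagram in $\overline{\cP_F}$ commute only up to conjugation — precisely the data of a czz. Consequently the corollary reduces to invoking the already-proven theorem with $\mY=\mX$ and $h=\mathrm{id}$, as indicated in the sentence immediately preceding its statement.
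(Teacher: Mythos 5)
Your proposal is correct and coincides with the paper's own argument: the paper likewise deduces the corollary by applying Theorem \ref{unpointed invariance} to the identity map $\mathrm{id}\colon \mX\to\mX$, viewed as a flow equivalence between $(\mX,x_\bullet)$ and $(\mX,x_\bullet')$, with the real work (the basepoint-free czz construction via non-basepoint-preserving $d_n,u_n$ and the conjugating paths) already done in that theorem. Nothing further is needed.
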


\noindent Accordingly, we denote the class by $PT(\mX)$, a flow equivalence invariant of $\mX$.

\medskip
To complete the proof that $PT$ is a complete invariant of flow equivalence, we require the following lemma.

\begin{lemma}\label{decompose}
Given a simplified czz diagram as in Diagram \ref{redzz}, the subdiagrams
$$
\begin{array}{rcl}
G_1&\buildrel \sigma_{1,n}\over\longleftarrow &G_n\\
\downarrow_{d_1}&&\downarrow_{d_n}\\
H_1&\buildrel \tau_{1,n}\over\longleftarrow &H_n
\end{array}
\qquad\mbox{and}\qquad
\begin{array}{rcl}
G_1&\buildrel \sigma_{1,n}\over\longleftarrow &G_n\\
\uparrow_{u_1}&&\uparrow_{u_n}\\
H_2&\buildrel \tau_{2,n+1}\over\longleftarrow &H_{n+1}
\end{array}
$$
commute up to conjugation; that is, there are elements $\rho_n\in H_1$ and $\zeta_{n}\in G_1$ such that
$$\tau_{1,n}d_n\ =\ \cc_{\rho_n}d_1\sigma_{1,n}\qquad\mbox{and}\qquad
\sigma_{1,n}u_n\ =\ \cc_{\zeta_n}u_1\tau_{2,n+1}\,.$$
\end{lemma}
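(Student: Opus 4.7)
The plan is to prove both commutativity-up-to-conjugation statements by induction on $n$, after first establishing a basic ``square identity'' at each level that combines the two triangle relations.

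First I would derive the basic square identities. Composing the two triangle relations at level $n$ gives
$$d_n u_n d_{n+1}\ =\ \cc_{h_n}\tau_n d_{n+1}\qquad\mbox{and}\qquad d_n u_n d_{n+1}\ =\ d_n\cc_{g_n}\sigma_n\ =\ \cc_{d_n(g_n)}d_n\sigma_n,$$
from which $\tau_n d_{n+1}=\cc_{\alpha_n}d_n\sigma_n$ with $\alpha_n=h_n^{-1}d_n(g_n)\in H_n$. A symmetric calculation using $u_n d_{n+1}=\cc_{g_n}\sigma_n$ together with $d_{n+1}u_{n+1}=\cc_{h_{n+1}}\tau_{n+1}$ gives $\sigma_n u_{n+1}=\cc_{\beta_n}u_n\tau_{n+1}$ with $\beta_n=u_n(h_{n+1})g_n^{-1}\in G_n$. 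These are precisely the ``one-step'' versions of the claimed statements.

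Next I would prove (1) by induction on $n$. The base $n=1$ is trivial with $\rho_1=1$. For the inductive step, I compose $\tau_{1,n+1}d_{n+1}=\tau_{1,n}(\tau_n d_{n+1})$ with the square identity above and pull the conjugating element through $\tau_{1,n}$ using the general rule $\tau_{1,n}\circ\cc_\alpha=\cc_{\tau_{1,n}(\alpha)}\circ\tau_{1,n}$. Combining with the inductive hypothesis $\tau_{1,n}d_n=\cc_{\rho_n}d_1\sigma_{1,n}$ and stacking conjugations via $\cc_a\cc_b=\cc_{ba}$, one obtains
$$\tau_{1,n+1}d_{n+1}\ =\ \cc_{\rho_n\cdot\tau_{1,n}(\alpha_n)}d_1\sigma_{1,n+1},$$
so $\rho_{n+1}=\rho_n\cdot\tau_{1,n}(\alpha_n)\in H_1$ works. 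Statement (2) follows by the mirror induction: after checking $n=1$ with $\zeta_1=1$ (interpreting $\sigma_{1,1}$ and $\tau_{2,2}$ as identities), one applies $\sigma_{1,n}$ to the square identity for $u_{n+1}$ and uses the inductive hypothesis to conclude $\zeta_{n+1}=\zeta_n\cdot\sigma_{1,n}(\beta_n)\in G_1$.

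There is no real obstacle here, only bookkeeping; the main thing to be careful about is the order of composition when stacking conjugations (the product $\cc_a\cc_b=\cc_{ba}$, not $\cc_{ab}$) and the placement of $\tau_{1,n}$ or $\sigma_{1,n}$ around a conjugator when moving it through a homomorphism. Lemma \ref{transcompos} (compositivity of $\s$) underwrites these manipulations and guarantees that the resulting conjugating elements live in the correct groups $H_1$ and $G_1$.
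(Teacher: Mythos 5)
Your proof is correct and follows essentially the same route as the paper's: both first derive the one-step square identities $\tau_n d_{n+1}\s d_n\sigma_n$ and $\sigma_n u_{n+1}\s u_n\tau_{n+1}$ from the two triangle relations via the compositivity of $\s$ (Lemma \ref{transcompos}), and then chain these recursively down to level $1$. The only blemish is the explicit conjugator in your first square identity, which under the convention $\cc_a\cc_b=\cc_{ba}$ should be $\alpha_n=d_n(g_n)h_n^{-1}$ rather than $h_n^{-1}d_n(g_n)$; since only the existence of some $\alpha_n\in H_n$ is used, this does not affect the argument.
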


\begin{proof}
The proof makes multiple use of the compositive nature of $\s$. For any given $n$ we have $u_{n}d_{n+1}\s \sigma_n$, and so applying $d_n$ we obtain $d_nu_{n}d_{n+1}\s d_n\sigma_n$, whence $\tau_nd_{n+1}\s d_n\sigma_n$. We now establish the diagram on the left for $n=3$ as we have already established it for $n=2.$
As $d_2\sigma_2 \s \tau_2d_3$, applying $u_1$ we have $u_1d_2\sigma_2 \s u_1\tau_2d_3$, whence $\sigma_1\sigma_2 \s u_1\tau_2d_3.$ Now applying $d_1$ we obtain 
\[
d_1\sigma_{1,3}\, \s d_1u_1\tau_2d_3 \s \tau_{1,3}\,d_3
\]
as required. Applying this argument recursively, we obtain the desired relations for the left diagram, and analogous arguments apply to the right hand diagram.
\end{proof}
Similar techniques can be applied to show that any two paths through the diagram originating and terminating at the same places will yield positive trope equivalent homomorphisms.

\begin{theorem}\label{ThmA}
Suppose $\mX$ and $\mY$ are minimal, aperiodic flow spaces satisfying $PT(\mX)=PT(\mY)$. Then $\mX$ and $\mY$  are flow equivalent.
\end{theorem}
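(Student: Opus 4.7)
The plan is to reverse the construction of Theorem \ref{unpointed invariance}: starting from a czz diagram, I will build a flow equivalence. First I fix expansions $(\mX,x_\bullet)\approx\eX$ and $(\mY,y_\bullet)\approx\eY$, writing $G_n=\pi_1(X_n,x_n)$ and $H_n=\pi_1(Y_n,y_n)$ with their distinguished positive generators. The hypothesis $PT(\mX)=PT(\mY)$ provides, after telescoping if necessary, a simplified czz as in Diagram \ref{redzz} with positive homomorphisms $d_n\colon G_n\to H_n$ and $u_n\colon H_{n+1}\to G_n$ whose triangles commute up to conjugation by elements $g_n\in G_n$ and $h_n\in H_n$.

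The key geometric step will be to realize each $d_n$ and $u_n$ by a continuous, flow-direction-preserving positive map of wedges of circles, denoted $D_n\colon X_n\to Y_n$ and $U_n\colon Y_{n+1}\to X_n$. The realization need not preserve wedge points: the symbolic data of a positive homomorphism determines the map on each circle, and the conjugating element that distinguishes two positive-homomorphism representatives of the same underlying unpointed map controls the location of the image of the wedge point. Using the analysis in Section \ref{SectRigidity}, I will show that two positive maps of wedges whose induced positive homomorphisms differ by conjugation (necessarily by a positive or negative element, by Lemma \ref{plusorminus}) are isotopic through flow-direction-preserving positive maps. Consequently, the czz identity $u_nd_{n+1}=\cc_{g_n}\sigma_{n,n+1}$ translates to an isotopy $U_n\circ D_{n+1}\simeq f_n$, and similarly $D_n\circ U_n\simeq g_n$.

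With these realizations in hand, I then form the interwoven telescope
\[
Y_1\xleftarrow{D_1}X_1\xleftarrow{U_1}Y_2\xleftarrow{D_2}X_2\xleftarrow{U_2}Y_3\xleftarrow{D_3}\cdots
\]
whose $X$-subsequence has composed bonding maps $U_n\circ D_{n+1}\simeq f_n$ and whose $Y$-subsequence has composed bonding maps $D_n\circ U_n\simeq g_n$. I will then establish an unpointed analogue of Lemma \ref{seq rep equiv}: two proper sequences of oriented wedges of circles whose pairwise corresponding positive bonding maps are isotopic through flow-direction-preserving positive maps have flow-equivalent inverse limits. Applied to the telescope, this yields flow equivalences from the inverse limit of the telescope to both $\mX$ and $\mY$, giving the desired flow equivalence $\mX\to\mY$ by composition.

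The main obstacle is establishing this unpointed analogue of Lemma \ref{seq rep equiv}. The strategy mirrors that of Theorem \ref{isotopic} but in reverse: the fibers of the expansion projections $p_n$ have diameters tending to zero, and this must absorb the discrepancies arising from the conjugating loops $g_n$ and $h_n$, which at level $n$ represent bounded displacements along individual circles of $X_n$ and $Y_n$ respectively, and hence increasingly small perturbations in the inverse limit. Careful attention to whether each conjugating element is positive or negative, appealing to Lemma \ref{plusorminus} and Corollary \ref{cor2dim}, will ensure that the limit map preserves the direction of the flow throughout the construction.
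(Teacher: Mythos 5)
Your overall strategy --- realise the czz geometrically and pass to an interwoven telescope --- is in the right spirit, but the argument has a genuine gap exactly where the real difficulty of the theorem lies. The step you call ``an unpointed analogue of Lemma \ref{seq rep equiv}'' is the assertion that two inverse sequences of wedges whose bonding maps induce positive-trope-related (i.e.\ conjugate) homomorphisms level by level have flow-equivalent limits. That assertion is essentially Theorem \ref{differentbasepoints}, which the paper obtains as a \emph{consequence} of Theorem \ref{ThmA}; invoking it here without an independent proof is circular. Your proposed justification --- that the discrepancies coming from the conjugating loops $g_n$ and $h_n$ are ``increasingly small perturbations in the inverse limit'' absorbed by the shrinking fibres of the $p_n$ --- is not correct. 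Conjugation by $g_n\in G_n$ displaces the image of the wedge point by an \emph{entire loop} in $X_n$, a macroscopic motion at level $n$; and when the discrepancy between the two candidate maps is pushed down to a fixed level, Lemma \ref{decompose} shows it is conjugation by elements $\rho_n\in H_1$ whose word lengths typically grow without bound as $n\to\infty$. Nothing here tends to zero, so a Mioduszewski-style ``almost commuting with shrinking mesh'' argument does not apply.

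What actually rescues the construction --- and what the paper's proof does --- is not smallness but the observation that, by Lemma \ref{plusorminus}, each conjugating element is positive or negative, so on the tiling-space factors $\cT(p_i)$ of Theorem \ref{tilingfactor} the conjugation acts as a \emph{shift} of the underlying subshift, i.e.\ a time-$t$ translation of the tiling flow. The paper works with the symbol-by-symbol maps $\widetilde{d}_i\colon\cT(p_i)\to\cT(Y_i)$, uses Lemma \ref{decompose} together with the fact that shifting a sequence does not change its language (and minimality) to show the image really is $\cT(q_i)$, and then compensates the shifts \emph{recursively by flow translations} so that the ladder commutes strictly. The resulting composites $u\circ d$ and $d\circ u$ are not identities but time-$t$ maps of the respective flows, hence flow equivalences --- the perturbations are never absorbed; they are recognised as self-flow-equivalences. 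To repair your argument you would need to replace the ``small perturbation'' step with this shift/translation bookkeeping (or an equivalent language-theoretic argument), at which point you would essentially be reproducing the paper's proof.
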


\begin{proof}
Suppose $\mX$ and $\mY$ are minimal, aperiodic flow spaces satisfying $PT(\mX)=PT(\mY)$ with corresponding expansions $\mX \approx \eX$ and $\mY \approx \eY$, which we assume to be flow expansions, with $k_i$ and $\ell_i$ denoting the numbers of circles in $X_i$ and $Y_i$ respectively. By truncating and telescoping as necessary, we assume without loss of generality that $G_\bullet= \Pi ((X_i,f_i))$ and $H_\bullet= \Pi ((Y_i,g_i))$ satisfy a simplified czz as in diagram (\ref{redzz}). 

Consider the tiling space factors as in Theorem \ref{tilingfactor}, which will all be minimal by the minimality of $\mX$ and $\mY$. From the positive homomorphisms $d_i$ in the czz yielding the positive trope equivalence of $G_\bullet$ and $H_\bullet$, we now construct continuous mappings $\widetilde{d}_i$ from $\cT(p_i)$ to the global tiling space  $\cT(Y_i)$ as in Definition \ref{global tiling space}. We identify a point in a tiling space with its unique representative in the set
\[
\left\{\,(t,z) \colon 0\leq t < c(z)\,,\,z\in Z\right\},
\]
where $Z$ is the corresponding subshift. We first define  $\widetilde{u}_i$ on the base 
\[
\left( \{0\}\times\{a_1^i,\dots,a^i_{k_i}\}^\mZ\right) \cap \cT(p_i),
\] which will be mapped into the base $\{0\}\times\{b_1^i,\dots,b^i_{\ell_i}\}^\mZ \subset \cT(Y_i)$, and we identify each base with the corresponding subshift in the natural way.   Then, on the base, $\widetilde{d}_i$ is given by
\[
\dots x_{-2}x_{-1}\,\cdot  x_0x_1x_2\dots\, \mapsto \, \dots d_i(x_{-2})d_i(x_{-1})\,\cdot d_i( x_0) d_i(x_1)d_i(x_2)\dots \,,
\]
where $\cdot$ delimits the terms with negative indices from those with non-negative indices, and the terms in the image sequence are determined by concatenating the $d_i$ images. Then $\widetilde{d}_i$ is extended to the arc above a given sequence $(x_n)_{n\in\mZ}$ by mapping it linearly onto the arcs above $\dots d_i(x_{-2})d_i(x_{-1})\,\cdot d_i( x_0)d_i(x_1)d_i(x_2)\dots $  and its first $\ell$ shifts, where $\ell$ is the number of symbols in $d_i(x_0)$. There is no reason to expect  $\widetilde{d}_i$ to be a factor map. While the image of $\widetilde{d}_i$  is minimal due to its continuity and the minimality of $\cT(p_i)$, we do not know a priori that this image coincides with $\cT(q_i),$ but we now show they do coincide.

By Lemma \ref{decompose}, $ \pi_1(g_{1,n}) \circ d_n=\cc_{\rho_n} \circ d_1 \circ  \pi_1(f_{1,n})$ for some $\rho_n$. Comparing the induced map $\widetilde{\cc_{\rho_n} \circ d}_1$ on $\cT(p_1)$ with $\widetilde{u}_1$, for a point $(x_n)_{n\mZ}$ in the base, the sequence $\widetilde{\cc_{\rho_n} \circ d}_1((x_n)_{n\mZ})$ is simply a shift of $\widetilde{d}_1((x_n)_{n\mZ})$ as successive conjugations cancel but result in shifts, where the number of shifts is determined by the length of $\rho_n$ and the direction of the shift is determined by whether $\rho_n$ is positive or negative, see Lemma \ref{plusorminus}. Thus, any finite sequence of consecutive terms of symbols (or word) that occurs in   $\widetilde{u}_1((x_i)_{i\geq 0})$ is a word appearing in the image of $ \pi_1(g_{1,n})$ for sufficiently large $n$. But these are precisely the words appearing in the minimal subshift in the base of $\cT(q_1).$  Hence, these minimal subshifts coincide and the image of $\widetilde{u}_1$ is indeed $\cT(q_1).$  Similarly, for each $i$ we have that $\widetilde{d}_i(\cT(p_i))= \cT(q_i)$. Identifying $\mX$ and $\mY$ with the inverse limits of $(\cT(p_i),\widetilde{f}_i)$ and $(\cT(q_i),\widetilde{g}_i)$, we now have a sequence of maps $\widetilde{d}_i\colon \cT(p_i) \to \cT(q_i)$ which at this point do not necessarily form a commutative ladder to determine a map $\mX \to \mY$ due to the need to conjugate in the czz. However, we know that the effect of conjugation results in a shift, which can be compensated for recursively (bearing in mind that the $\widetilde{g}_i$ are factor maps) by a translation (time $t$ map of the flow) according to the length of the tiles associated to the conjugating element after the application of the  $\widetilde{d}_i$, with the translations chosen recursively so that we ultimately have a commutative ladder of maps determining a map $d\colon \mX \to \mY.$ An analogous construction yields a map $u\colon \mY \to \mX.$ The compositions of $u$ and $d$ yield a time $t$ map on the respective flow spaces, and so we can conclude that they are both flow equivalences. 

\end{proof}

\begin{cor}\label{PTmain}
Let $\mX$ and $\mY$ be minimal, aperiodic flow spaces. Then $\mX$ and $\mY$ are flow equivalent if and only if $PT(\mX) =PT(\mY).$
\end{cor}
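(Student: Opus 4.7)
The plan is to derive this corollary directly by combining the two principal results established immediately beforehand, namely Theorem \ref{unpointed invariance} and Theorem \ref{ThmA}, which together supply the two implications needed for the biconditional.

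First, for the forward direction, I would invoke Theorem \ref{unpointed invariance}: if $\mX$ and $\mY$ are flow equivalent minimal, aperiodic flow spaces, then for any choices of base points $x_\bullet \in \mX$ and $y_\bullet \in \mY$ one has $PT(\mX, x_\bullet) = PT(\mY, y_\bullet)$. Combined with the corollary observed just after that theorem, the positive trope class is independent of the choice of base point, so the equality $PT(\mX) = PT(\mY)$ is well-defined and follows at once. No further work is required for this implication.

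For the reverse direction, I would invoke Theorem \ref{ThmA}: assuming $PT(\mX) = PT(\mY)$ for minimal, aperiodic flow spaces, we pick any base points $x_\bullet \in \mX$ and $y_\bullet \in \mY$ and any associated expansions (which exist since minimal flow spaces are dense at every point by the corollary following Theorem \ref{expansionexists}). The equality $PT(\mX) = PT(\mY)$ gives us that the corresponding \pps's $\overline{\pi_1^+}(\mX,x_\bullet)$ and $\overline{\pi_1^+}(\mY,y_\bullet)$ are positive trope equivalent, which is precisely the hypothesis needed to apply Theorem \ref{ThmA} and conclude that $\mX$ and $\mY$ are flow equivalent.

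Since both directions are already established as theorems in this very section, the only work at the level of the corollary is to observe that the base-point-independence of $PT(\mX)$ (provided by the corollary to Theorem \ref{unpointed invariance}) allows the biconditional to be stated purely in terms of the unpointed invariants. There is no real obstacle here; the corollary is a formal consequence, and its statement is essentially a repackaging of the two preceding theorems into a single clean biconditional.
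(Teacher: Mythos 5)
Your proposal is correct and matches the paper's own (implicit) argument exactly: the corollary is stated without separate proof precisely because it is the formal conjunction of Theorem \ref{unpointed invariance} (together with the base-point independence of $PT$) for the forward implication and Theorem \ref{ThmA} for the converse. Nothing further is needed.
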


As we now will show, given an expansion of a flow space $\mX$, the proof of Theorem \ref{ThmA} allows us to see how to identify expansions for $\mX$ about alternative base points. 

\begin{defn}
The positive maps $f,g$ in $\cW$ are \emph{positive trope equivalent}, denoted $g\s f$, if the maps in $\cP_F$ induced by $f$ and $g$ are positive trope equivalent.
\end{defn}
\begin{theorem}\label{differentbasepoints}
Given a flow space $\mX\approx \eX$, any flow space $\mY$ with expansion $\mY\approx \underleftarrow\lim(X_i,g_i,q_i)$ for which $g_i\s f_i$ for all $i$ is flow equivalent to $\mX.$    
\end{theorem}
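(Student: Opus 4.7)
The strategy is to reduce the statement to Corollary \ref{PTmain} by showing that the inverse sequences $\Pi((X_i,f_i))$ and $\Pi((X_i,g_i))$ in $\overline{\cP_F}$ are positive trope equivalent, so that $PT(\mX)=PT(\mY)$.

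First, unpack the hypothesis. By the definition of $\s$ in $\cW$, the relation $g_i\s f_i$ is equivalent to $\pi_1(g_i)\s\pi_1(f_i)$ in $\cP_F$, so for each $i$ there exists an element $a_i\in\pi_1(X_i,x_i)$ with $\pi_1(g_i)=\cc_{a_i}\pi_1(f_i)$.

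Next, build a simplified conjugate zigzag between $G_\bullet = \Pi((X_i,f_i))$ and $H_\bullet = \Pi((X_i,g_i))$. Because the two sequences use the same groups $\pi_1(X_n,x_n)$ level by level, the diagram essentially writes itself: take the downward maps $d_n\colon G_n\to H_n$ to be the identity $\mathrm{id}_{\pi_1(X_n,x_n)}$, and the diagonal upward maps $u_n\colon H_{n+1}\to G_n$ to be $\pi_1(g_n)$. Each is a positive homomorphism (the identity trivially, and $\pi_1(g_n)$ because $g_n$ is a positive map in $\cW$). The two commutativity-up-to-conjugation conditions required by the definition of a czz are then immediate:
\[
u_n\circ d_{n+1}\ =\ \pi_1(g_n)\ =\ \cc_{a_n}\pi_1(f_n)\ =\ \cc_{a_n}\sigma_n,\qquad d_n\circ u_n\ =\ \pi_1(g_n)\ =\ \tau_n,
\]
where in the second relation the conjugating element is the identity. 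This produces a valid czz, giving $G_\bullet\s H_\bullet$ and hence $PT(\mX)=PT(\mY)$.

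Finally, Corollary \ref{PTmain} delivers a flow equivalence $\mX\to\mY$. All the geometric work of lifting an algebraic positive trope equivalence to an actual flow equivalence is already contained in the proof of Theorem \ref{ThmA}, so there is no real obstacle to overcome here; the task is simply to recognise that positive trope equivalence at the level of individual bonding maps automatically assembles into positive trope equivalence of the full inverse sequences when the approximating wedges of circles are shared, and that in this situation the identity is available to serve as one leg of the czz.
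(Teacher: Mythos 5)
Your proof is correct and follows essentially the same route as the paper: the paper also builds a czz between $\Pi((X_i,f_i))$ and $\Pi((X_i,g_i))$ with identity vertical maps and then invokes Theorem \ref{ThmA}. The only (immaterial) difference is that the paper takes the diagonal maps $u_n$ to be $\pi_1(f_n)$ rather than $\pi_1(g_n)$, which shifts the conjugating element from the upper triangles to the lower ones.
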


\begin{proof}
Consider the following zigzag diagram of maps relating $\Pi\left( (X_i,f_i)\right)$ and $\Pi\left( (X_i,g_i)\right),$ where we suppress the notation for base points. 
\[
\begin{tikzcd}[font=\large] 
\pi_1(X_{1})  \arrow[swap]{d}{\text{id}} && \pi_1(X_{2})\arrow[swap]{d}{\text{id}}\arrow{ll}{\pi_1(f_{1})}&& \pi_1(X_{3})\arrow{ll}{\pi_1(f_{2})}\arrow[swap]{d}{\text{id}} &&  \cdots \arrow{ll}  \\ \pi_1(X_{1}) && \pi_1(X_{2}) \arrow{ll}{\pi_1(g_{1})}\arrow{llu}{\pi_1(f_1)} && \pi_1(X_{3})\arrow{ll}\arrow{ll}{\pi_1(g_{2})}\arrow{llu}{\pi_1(f_2)}  &&\cdots \arrow{ll}   
\end{tikzcd}
\]
This is a czz by construction, and so by Theorem \ref{ThmA} $\mX$ and $\mY$ are flow equivalent.
\end{proof}

In order to systematically construct expansions of a flow space about points from the whole range of path components, the following notion is likely crucial.

\begin{defn}\label{taileq}
For a category $\cC,$ two sequences $(C_i,f_i)$ and $(D_i,g_i)$ in $\overline{\cC}$ are \emph{tail equivalent}, denoted $(C_i,f_i) \sim_{tail} (D_i,g_i)$ if after possible truncations and re-indexing the two sequences are identical.
\end{defn}

Clearly, $ \sim_{tail}$ is an equivalence relation on $\cC$, and any two tail equivalent sequences in $\overline{\cC}$ are isomorphic in $\overline{\cC}$.

\begin{question}
Given an expansion of a flow space $\mX\approx \eX$, when is it possible to identify sets $\cF_i$ of maps positive trope equivalent to $f_i$ so that one can identify the orbits of the action of the homeomorphism group on $\mX$ with  
\[
\{\,(g_i)_{i\in \mZ^+}\colon g_i\in \cF_i \, \}/\sim_{tail}\,?   
\]
\end{question}

Observe that in the case of an expansion of a solenoid for which $X_i$ is the circle for each $i$, the above set consists of a single point, which matches the number of orbits of a solenoid's homeomorphism group.

In Theorem \ref{varied base point} we will see how to apply Theorem \ref{differentbasepoints} and afterwards indicate how the base points can be identified in some alternate expansions.
 
We now relate these results to homeomorphisms of Cantor sets. For a homeomorphism $h$ of a Cantor set, $PT(h)$ denotes the positive trope class of the suspension of $h.$ Thus, following Theorem \ref{germ}, we directly obtain  

\begin{theorem}
Two minimal homeomorphisms $h_1$ and $h_2$ of the Cantor set are germinally equivalent if and only if $PT(h_1)=PT(h_2).$
\end{theorem}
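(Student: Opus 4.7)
The plan is a very short chain of biconditionals: the substantial work has already been carried out in Theorem \ref{germ} and Corollary \ref{PTmain}, so what remains is only to verify hypotheses and to assemble the pieces.

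First I would confirm that the suspensions $\mX_1$ and $\mX_2$ of $h_1$ and $h_2$ satisfy the hypotheses of Corollary \ref{PTmain}. Minimality of each $h_i$ on its Cantor set $C_i$ lifts to minimality of the corresponding suspension flow, since a dense $h_i$-orbit in $C_i$ immediately produces a dense flow orbit in the suspension. Aperiodicity follows next: a periodic orbit of the suspension flow would meet $C_i$ in a finite $h_i$-invariant set, and by minimality of $h_i$ this set would have to exhaust $C_i$, contradicting that $C_i$ is a Cantor set and hence infinite. Therefore each $\mX_i$ is a minimal, aperiodic flow space and Corollary \ref{PTmain} applies.

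With these verifications in hand, I would simply chain the relevant equivalences. By Theorem \ref{germ}, $h_1$ and $h_2$ are germinally equivalent if and only if $\mX_1$ and $\mX_2$ are flow equivalent. By Corollary \ref{PTmain}, $\mX_1$ and $\mX_2$ are flow equivalent if and only if $PT(\mX_1) = PT(\mX_2)$. By the definition of $PT(h_i)$ as $PT(\mX_i)$ recorded immediately before the theorem statement, the latter identity is literally $PT(h_1) = PT(h_2)$, giving the desired equivalence.

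Given that every substantial ingredient, namely the rigidity of expansions (Theorem \ref{isotopic}), the independence of the positive trope class from the choice of base point (Theorem \ref{unpointed invariance}), and the identification of flow equivalence of suspensions with germinal equivalence of Cantor homeomorphisms (Theorem \ref{germ}), has been established earlier in the paper, there is no genuine obstacle to overcome here. The only step requiring any thought, as above, is the confirmation that the suspensions meet the minimal and aperiodic hypotheses of Corollary \ref{PTmain}; everything else is a transparent reformulation.
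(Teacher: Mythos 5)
Your proposal is correct and follows essentially the same route as the paper, which also obtains the result directly by chaining Theorem \ref{germ} with Corollary \ref{PTmain} and the definition of $PT(h_i)$ as the positive trope class of the suspension. Your added verification that the suspensions are minimal and aperiodic (so that Corollary \ref{PTmain} genuinely applies) is a sound and worthwhile piece of diligence that the paper leaves implicit.
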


We have that the isomorphism classes in $\overline{\cW}$ which correspond to flow equivalence classes of minimal, aperiodic flow spaces are sent by $\Pi$ to positive trope equivalence classes in $\overline{\cP_f}$. Coarser functors will still provide an invariant of flow equivalence but will not be a complete invariant if strictly coarser. Nevertheless, they  can be useful when more easily computed as we see for the next invariant, demonstrated in an application in Section \ref{Applications}.%, and we conclude this section by considering an example of such an invariant to be used in \ref{Applications}. %for which we will need to introduce some additional terminology.

\begin{defn}
Let $\cP_{\cA}$ denote the category of ordered pairs $(A_n, \cG)$ consisting of a finitely generated free abelian group $A_n$ and a chosen set of generators $\cG.$ The morphisms of  $\cP_{\cA}$ are the positive homomorphisms: those that preserve the semigroups generated by the chosen generators. 
\end{defn}

\begin{defn}
For a sequence $(X_i,f_i)\in \overline{\cW}$, let  $H\left( (X_i,f_i) \right)$ be the sequence in $\overline{\cP_\cA}$ given by applying homology $H_1$ to the $X_i$ and $f_i$ and choosing generators corresponding to the circles in $X_i$ with the positive orientation.
\end{defn}

As abelianisation trivialises conjugations, considering the corresponding facts for $\Pi$, the following is well-defined and independent of both expansion and  base point.

\begin{defn}
For a minimal, aperiodic flow space $\mX$ define  $\h(\mX)$ to be the isomorphism class in $\overline{\cP_\cA}$ of $H\left( (X_i,f_i) \right)$  for any $(X_i,f_i)$ corresponding to an expansion $(\mX,x_\bullet)\approx \eX$.
\end{defn}

Specifically, if $PT(\mX)=PT(\mY)$, then $\h(\mX)=\h(\mY)$. This then immediately leads to the following result.

\begin{theorem}
If the minimal, aperiodic flow spaces $\mX$ and $\mY$ are flow equivalent, then $\h(\mX)=\h(\mY)$.
\end{theorem}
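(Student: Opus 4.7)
The plan is to show this as a direct formal consequence of Corollary \ref{PTmain} together with the triviality of inner automorphisms in abelian groups. First, since $\mX$ and $\mY$ are flow equivalent, Corollary \ref{PTmain} gives $PT(\mX)=PT(\mY)$. Choosing any expansions $(\mX,x_\bullet)\approx\eX$ and $(\mY,y_\bullet)\approx\eY$, this means that the \pps's $G_\bullet = \Pi((X_i,f_i))$ and $H_\bullet = \Pi((Y_i,g_i))$ are positive trope equivalent, so after telescoping and truncation we have a simplified conjugate zigzag as in Diagram \ref{redzz} with triangle relations $u_n d_{n+1} = \cc_{g_n}\sigma_{n,n+1}$ and $d_n u_n = \cc_{h_n}\tau_{n,n+1}$ for some elements $g_n\in G_n$ and $h_n\in H_n$.

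Next I would apply the abelianisation functor levelwise to this entire czz. For any abelian group $A$, any element $a\in A$, and any homomorphism $\varphi$ into $A$, one has $\cc_a\varphi = \varphi$. Consequently, passing to abelianisations replaces each triangle-up-to-conjugation by an honest equality, and the induced diagram on $H_1$ becomes a genuine zigzag isomorphism in $\overline{\cP_\cA}$ between $H((X_i,f_i))$ and $H((Y_i,g_i))$. A small point to verify is that the induced maps $\bar d_n$ and $\bar u_n$ are still morphisms of $\cP_\cA$: positivity of a homomorphism in $\cP_F$ is defined purely at the level of the distinguished generators, and since the chosen generators of $H_1(X_i)$ are by definition the abelianised classes of the distinguished generators of $\pi_1(X_i,x_i)$, positive words in $\pi_1$ map to positive elements of $H_1$. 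Hence $\bar d_n$ and $\bar u_n$ are indeed positive homomorphisms.

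This zigzag isomorphism is by definition a witness that the inverse sequences $H((X_i,f_i))$ and $H((Y_i,g_i))$ represent the same isomorphism class in $\overline{\cP_\cA}$, which is exactly the statement $\h(\mX)=\h(\mY)$. There is no real obstacle here: the theorem is essentially a direct corollary of the completeness of $PT$ (Corollary \ref{PTmain}) together with the observation, already recorded in the paragraph preceding the definition of $\h$, that abelianisation trivialises conjugations. The only thing worth spelling out carefully is the interaction between the \emph{positivity} condition and abelianisation, which is immediate from how the generators of $H_1(X_i)$ have been chosen.
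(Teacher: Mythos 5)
Your proof is correct and follows exactly the route the paper intends: flow equivalence gives $PT(\mX)=PT(\mY)$ (via Theorem \ref{unpointed invariance}/Corollary \ref{PTmain}), and abelianising the resulting czz kills the conjugations, turning it into a genuine zigzag isomorphism in $\overline{\cP_\cA}$ — which is precisely the remark "abelianisation trivialises conjugations" preceding the definition of $\h$. Your additional check that positivity survives abelianisation is a worthwhile detail the paper leaves implicit, but it does not change the argument.
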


The invariant given by the homology core developed in \cite{CH} for spaces of all dimensions can be be derived from $\h$ for the one-dimensional flow spaces. 

A similar construction can be made with cohomology. 

%We now introduce an even coarser invariant which can be useful as it is easier to compute than the full $\h$. For this we require the following definition.

%\begin{defn}
%Given a flow space with expansion $\mX \approx \eX$, 
%\[
%R_n= \text{rank} \left(\cap _{i>n} (f_{n,i})_*(H_i(X_i))\right)
%\]
%\end{defn}
%By construction, $R_n$ is non--decreasing with $n$ for a given expansion. By the invariance of $\h$, we see that the following notion is well--defined and independent of the expansion.

%\begin{defn}
%The \emph{eventual rank} of a flow space $\mX$ admitting an expansion is the least upper bound of the $R_n$ for any expansion of $\mX$ and is denoted $R(\mX).$
%\end{defn}

%$R(\mX)$ can be (countably) infinite.  When $R(\mX)$ is finite,  for any given expansion, $R_n$ will take on the value $R(\mX)$ for all sufficiently large $n$.

%\begin{prop}
%If the minimal, aperiodic flow spaces $\mX$ and $\mY$ are flow equivalent, then $R(\mX)=R(\mY)$.
%\end{prop}

%%%%%%%%%%%%%%
%%%%%%%%%%%%%%
%%%%%%%%%%%%%%
%%%%%%%%%%%%%%
%%%%%%%%%%%%%%
%%%%%%%%%%%%%%

\section{Applications}\label{Applications}

We will now see how to apply $\p$, $PT$, and $\h$ to classify various families of flow spaces. 

We start with a general result. We call a positive homomorphism in $\cP_F$ \emph{invertible} if it has a, possibly non-positive, inverse as a group homomorphism.

\begin{theorem}\label{invertible}
Suppose $\sigma$ and $\tau$ are proper, primitive, positive homomorphisms of $F_n$, and let $f_\sigma$ and $f_\tau$ be maps of $W_n$ which $\sigma$ and $\tau$ represent. Let $\Omega_\sigma$ and $\Omega_\tau$ be the flow spaces given by the limits of the stationary sequences $(X_i,f_i)$, where $X_i=W_n$ and $f_i=f_*$ for all $i,$ where $*$ is $\sigma$ for $\Omega_\sigma$ and is $\tau$ for $\Omega_\tau$. If $\sigma$ is invertible and $\tau$ is non-invertible, then $\Omega_\sigma$ and $\Omega_\tau$ are not flow equivalent.
\end{theorem}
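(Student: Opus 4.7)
The plan is to proceed by contradiction. Suppose $\Omega_\sigma$ and $\Omega_\tau$ are flow equivalent. Since $\sigma$ and $\tau$ are both proper and primitive, Theorems~\ref{symbolicexpansion} and~\ref{minimal} guarantee that $\Omega_\sigma$ and $\Omega_\tau$ are minimal and aperiodic, so Corollary~\ref{PTmain} yields $PT(\Omega_\sigma)=PT(\Omega_\tau)$. The stationary \pps's $G_\bullet=(F_n,\sigma,\sigma,\ldots)$ and $H_\bullet=(F_n,\tau,\tau,\ldots)$ are therefore positive trope equivalent, and after truncation and telescoping we obtain a simplified czz diagram of the form~(\ref{redzz}). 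Because telescoping a stationary sequence merely composes repeats of its single bonding map, this gives positive integers $k_i,l_i\geq 1$ together with positive homomorphisms $d_i,u_i\colon F_n\to F_n$ and elements $g_i,h_i\in F_n$ such that
\[
u_i\circ d_{i+1}=\cc_{g_i}\sigma^{k_i}\qquad\text{and}\qquad d_i\circ u_i=\cc_{h_i}\tau^{l_i}.
\]

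The decisive tool will be the Hopfian property for finitely generated free groups: every surjective endomorphism of $F_n$ is an automorphism. By hypothesis $\sigma$, and hence $\sigma^{k_i}$, is a group automorphism of $F_n$; inner automorphisms preserve the property of being an automorphism, so $u_i\circ d_{i+1}$ is an automorphism of $F_n$. In particular it is surjective, which forces $u_i$ itself to be surjective, and the Hopfian property then promotes $u_i$ to an automorphism. Consequently $d_{i+1}=u_i^{-1}\circ(u_i\circ d_{i+1})$ is a composite of automorphisms and is therefore itself an automorphism. It follows that $d_i\circ u_i$ is an automorphism, and so the conjugate $\tau^{l_i}=\cc_{h_i^{-1}}(d_i\circ u_i)$ is one as well. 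Since $l_i\geq 1$, the inclusion $F_n=\tau^{l_i}(F_n)\subseteq\tau(F_n)$ shows $\tau$ is surjective, whereupon a second application of the Hopfian property forces $\tau$ to be an automorphism of $F_n$, contradicting the hypothesis that $\tau$ is non-invertible.

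The step most in need of care will be the reduction to the simplified czz form with bonding maps of the prescribed shape $\sigma^{k_i}$ and $\tau^{l_i}$; this is where the stationary nature of the sequences enters, because telescoping a stationary sequence amounts to taking a composite of the single repeated bonding map, so the compositions are automatically positive integer powers of the original map. Once this is in hand, the remaining argument reduces to two standard algebraic facts: that inner automorphisms preserve automorphy, and that finitely generated free groups are Hopfian (for instance via their residual finiteness and Malcev's theorem). No finer combinatorial or dynamical input is needed beyond these ingredients.
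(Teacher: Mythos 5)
Your proof is correct and follows essentially the same route as the paper's: both reduce flow equivalence of $\Omega_\sigma$ and $\Omega_\tau$ to a positive trope (czz) relation between the stationary systems and then invoke the Hopfian property of $F_n$ to derive a contradiction with the non-invertibility of $\tau$. The only cosmetic difference is that you push the Hopfian argument through both triangle relations to conclude that $\tau$ itself would be an automorphism (with a harmless index shift needed to know that $d_i$, and not just $d_{i+1}$, is an automorphism before forming $d_i\circ u_i$), whereas the paper works with the single composite relation $\cc_{h}\,\sigma^{\ell}=v\circ\tau^{m}\circ u$ and contradicts surjectivity by noting that the image of $\tau^{m}$ is a proper subgroup.
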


\begin{proof}
Recall that a group $G$ is {\em Hopfian\/} if any endomorphism $f\colon G\to G$ which is onto is an isomorphism. It is known that $F_n$ is Hopfian.

Suppose under our hypotheses that $\Omega_\sigma$ and $\Omega_\tau$ were flow equivalent. Then as $PT(\Omega_\sigma)=PT(\Omega_\tau)$, there are positive homomorphisms $u,v\colon F_n\to F_n$ such that, for some $\ell$ and $m$, $\sigma^\ell$ agrees with $v\circ\tau^m\circ u$ up to conjugation by some element $h\in F_n$; i.e., $h^{-1}(\sigma^\ell(x))h=v\circ\tau^m\circ u(x)$. 

As $\sigma$ is invertible, $\sigma^\ell$ and hence $v\circ\tau^m\circ u$ are invertible since conjugation is a group isomorphism. In particular, $v$ is onto and hence is a group isomorphism by the Hopfian property. As $\tau$ is not invertible, $\tau^m$ is not onto. So the image of $\tau^m\circ u$ is a strict subgroup of $F_n$. As $v$ is a group isomorphism, the  image of $v\circ\tau^m\circ u$ is a strict subgroup of $F_n$, contradicting the earlier point that $v\circ\tau^m\circ u$ is an isomorphism. 
\end{proof}
The spaces $\Omega_\sigma$ and $\Omega_\tau$ are examples of substitution tiling spaces as discussed in Section \ref{expansionexamp}.

\begin{ex}\label{twistedfib}

Consider the following positive homomorphisms $F_2 \to F_2$
$$\sigma:\ a\mapsto aabaabab\quad b\mapsto aabab\qquad\qquad\tau: a\mapsto aabaabab\quad b\mapsto abaab\,.
$$

Using the folding lemma of Stallings, we see that $\sigma$ is invertible but $\tau$ is not. Thus, $\Omega_\sigma$ and $\Omega_\tau$ are not flow equivalent. Observe that the matrices representing the abelianisations of $\sigma$ and $\tau$ are identical, from which it follows that the corresponding homeomorphisms of Cantor sets are orbit equivalent, \cite{DHS}. From this it also follows that $\h(\Omega_\sigma)=\h(\Omega_\tau)$ and that the spaces have isomorphic Cech cohomology. 
\end{ex}

The ideas of Theorem \ref{invertible} can be extended to cover examples such as s-adic sequences.

\begin{theorem}\label{geninvertible}
Suppose $\mX$ and $\mY$ are minimal, aperiodic flow spaces with expansions $\mX\approx \eX$ and $\mY \approx \eY$ for which each $X_i$ and $Y_i$ is $W_n$. If for each $i,$ $\pi_1(f_i)$ is invertible and if for each $i$ there is a $j>i$ for which $\pi_1(g_j)$ is not invertible, then  $\mX$ and $\mY$ are not flow equivalent.
\end{theorem}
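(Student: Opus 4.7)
The plan is to assume for contradiction that $\mX$ and $\mY$ are flow equivalent and derive an incompatibility with the Hopf property of the free group $F_n$. By Corollary \ref{PTmain} flow equivalence would give $PT(\mX)=PT(\mY)$, so the given expansions yield a conjugate zigzag diagram relating $\Pi((X_i,f_i))$ and $\Pi((Y_i,g_i))$. Using the reduction discussed after Diagram \ref{redzz}, I would replace this by a simplified czz, and then perform additional telescoping so that in the resulting simplified czz every bonding map $\sigma_i\colon G_{i+1}\to G_i$ is still invertible and every bonding map $\tau_i\colon H_{i+1}\to H_i$ is \emph{not} invertible. The first property is automatic: each $\sigma_i$ is a composition of original $\pi_1(f_j)$'s, all of which are invertible, and a composition of isomorphisms of $F_n$ is again an isomorphism. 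The second is arranged using the hypothesis: there are cofinitely many $j$ with $\pi_1(g_j)$ non-invertible, and a composition of positive homomorphisms of $F_n$ is non-surjective (hence non-invertible, by the Hopf property of $F_n$) as soon as any one factor is non-surjective, since surjectivity of $A\xrightarrow{g}B\xrightarrow{f}C$ forces both $f$ and $g$ to be surjective when $A,B,C$ are all copies of $F_n$.

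The heart of the argument is then a short induction showing that in this simplified czz every vertical map $d_n$ and diagonal map $u_n$ is a group isomorphism. For the base case, the relation $u_1\circ d_2=\cc_{g_1}\circ\sigma_1$ has an isomorphism on the right-hand side (since $\sigma_1$ is invertible and $\cc_{g_1}$ is an inner automorphism), so $u_1$ is surjective; as $u_1\colon H_2\to G_1$ is a surjection between copies of $F_n$, the Hopf property forces $u_1$ to be an isomorphism. Consequently $d_2=u_1^{-1}\circ\cc_{g_1}\circ\sigma_1$ is an isomorphism. For the inductive step, assume $d_n$ is an isomorphism; the relation $u_n\circ d_{n+1}=\cc_{g_n}\circ\sigma_n$ is again a right-hand-side isomorphism, so by the same Hopfian argument $u_n$ is an isomorphism, and then $d_{n+1}=u_n^{-1}\circ\cc_{g_n}\circ\sigma_n$ is an isomorphism.

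Finally I would conclude by invoking the other triangle relation $d_n\circ u_n=\cc_{h_n}\circ\tau_n$: with both $d_n$ and $u_n$ now isomorphisms, the left-hand side is an isomorphism, so $\tau_n$ must itself be an isomorphism for every $n$. This contradicts the arrangement made during telescoping that each $\tau_n$ be non-invertible, completing the proof that $\mX$ and $\mY$ cannot be flow equivalent.

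I expect the main obstacle to be the telescoping step: one must verify that in a simplified czz it is legitimate to pass to a further telescoped czz using subsequences $1=i_1<i_2<\cdots$, with new diagonal maps defined as $u_{i_k}^{\mathrm{new}}=\sigma_{i_k,\,i_{k+1}-1}\circ u_{i_{k+1}-1}$ (and analogously for a downward version), and that this new data still satisfies the conjugation relations. This follows by direct computation using the compositive nature of $\s$ established in Lemma \ref{transcompos} and the pass-through relations of Lemma \ref{decompose}, after which the freedom to make the new $\tau_i^{\mathrm{new}}=\tau_{i_k,\,i_{k+1}}$ incorporate a non-invertible original factor follows from the cofinal-non-invertibility hypothesis. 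Once this technical set-up is in place, the Hopfian induction is essentially the same observation as in Theorem \ref{invertible}, pushed through the whole diagram.
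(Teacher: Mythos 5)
Your proposal is correct and is essentially the paper's own argument: the paper likewise passes from a flow equivalence to a czz via Corollary \ref{PTmain}, extracts a relation in which the invertible $\pi_1(f_{\ell_1,\ell_2})$ agrees up to conjugation with a composite containing a non-invertible $\pi_1(g_{m_1,m_2})$, and derives the contradiction from the Hopfian property of $F_n$ exactly as in Theorem \ref{invertible}; your version merely packages this as an induction through the whole simplified czz (forcing every $u_n$ and $d_{n+1}$ to be an isomorphism and then contradicting the $H$-side triangles) rather than as a single extracted composite. The only slip is the phrase ``cofinitely many'' --- the hypothesis yields only cofinally many non-invertible $\pi_1(g_j)$ --- but your telescoping step in fact uses only cofinality, so nothing is affected.
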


\begin{proof}
Suppose  $PT(\mX)=PT(\mY)$ and consider the czz equating the positive trope classes of $\Pi\left( (X_i,f_i)\right)$ and  $\Pi\left( (Y_i,g_i)\right)$. Then for some $\ell_1,\ell_2$ and $m_1,m_2$, there are positive homomorphisms $u,v\colon F_n\to F_n$ such that, the invertible homomorphism $\pi_1(f_{\ell_1,\ell_2})$ agrees with $v\circ\pi_1(g_{m_1,m_2})\circ u$ up to conjugation by some element $h\in F_n$, where $\pi_1(g_{m_1,m_2})$ is not invertible. By the same argument as in the proof of Theorem \ref{invertible}, this leads to a contradiction.
\end{proof}

We now present examples of two different families of flow spaces, each with an uncountable number of flow equivalence classes and which have only one flow equivalence class in common. For this we will make use of the $\sigma$ and $\tau$ of Example \ref{twistedfib}  together with the invertible positive homomorphism $F_2 \to F_2$ given by 
$$\rho\colon a\mapsto ba\quad b\mapsto bba\,,$$ 
which represents the positive map $f_\rho \colon W_2 \to W_2 $. 

Let $\Sigma$ be the family of flow spaces $\mX$ admitting an expansion $\mX \approx \eX$ for which each $X_i=W_2$ and each $f_i$ is either $f_\sigma$ or $f_\rho$. Similarly, let $\rm{T}$ be the family of flow spaces $\mX$ admitting an expansion $\mX \approx \eX$ for which each $X_i=W_2$ and each $f_i$ is either $f_\tau$ or $f_\rho$. 

For the classification, the following notion will be useful. 
As our invariants are independent of the expansion used for a flow space, we are justified in only considering the expansions of the flow spaces in $\Sigma$ and $\rm{T}$ that use $f_\rho$, $f_\sigma$  and $f_\tau$. We will refer to such an expansion as an \emph{admissible} expansion, and we will identify these expansions with the corresponding sequences in the symbols $\sigma$, $\tau$ and $\rho.$  Also, we refer to a flow space  in $\Sigma$ or $\rm{T}$ as \emph{non-constant} if it has an admissible expansion which is not tail equivalent to a constant (stationary) sequence.

It is not difficult to see that the constant flow spaces $\Omega_\sigma$ and $\Omega_\rho$ are flow equivalent. They are in fact flow equivalent to the Fibonacci substitution tiling space. We have already established that $\Omega_\tau$ is not flow equivalent to $\Omega_\sigma$.

We have as a direct consequence of Theorem \ref{geninvertible} the following result.

\begin{theorem}
No non-constant flow space in $\Sigma$ is flow equivalent to a non-constant flow space in $\rm{T}$.
\end{theorem}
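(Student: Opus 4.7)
The plan is to realise this as an immediate application of Theorem \ref{geninvertible}. The key input is the observation that $\pi_1(f_\sigma)=\sigma$ and $\pi_1(f_\rho)=\rho$ are both invertible positive endomorphisms of $F_2$ (the former noted in Example \ref{twistedfib} via Stallings' folding, the latter stated in the paragraph defining $\rho$), whereas $\pi_1(f_\tau)=\tau$ is non-invertible.

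First, I would fix a non-constant $\mX\in\Sigma$ together with an admissible expansion $\mX\approx\underleftarrow\lim(X_i,f_i,p_i)$, so each $X_i=W_2$ and each $f_i\in\{f_\sigma,f_\rho\}$. In particular, every $\pi_1(f_i)$ is invertible. Likewise, using the definition of non-constant, I would fix a non-constant $\mY\in{\rm T}$ and choose for it an admissible expansion $\mY\approx\underleftarrow\lim(Y_i,g_i,q_i)$ with each $g_i\in\{f_\tau,f_\rho\}$ which is \emph{not} tail equivalent to a stationary sequence.

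The second step is to extract the non-invertibility condition on the $\mY$-side. Since the chosen expansion of $\mY$ is not tail equivalent to a constant sequence, it cannot in particular be tail equivalent to the stationary $f_\rho$ sequence; hence the set of indices $i$ with $g_i=f_\tau$ must be infinite. Equivalently, for every $i\in\mZ^+$ there exists $j>i$ such that $\pi_1(g_j)=\tau$ is non-invertible. Both hypotheses of Theorem \ref{geninvertible} now hold for the pair of expansions, and invoking it yields that $\mX$ and $\mY$ are not flow equivalent, completing the argument.

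I do not anticipate any real obstacle: the substantive work has already been done in Theorem \ref{geninvertible}, and the content of this corollary is essentially bookkeeping --- namely, recording which of $\sigma,\tau,\rho$ are invertible and observing that a non-constant ${\rm T}$-expansion must use $f_\tau$ infinitely often. One minor remark worth including is that the argument does not in fact require $\mX$ itself to be non-constant, since all admissible bonding maps on the $\Sigma$ side induce invertible homomorphisms; the non-constancy of $\mY$ is the only feature actually used.
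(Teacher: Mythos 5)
Your argument is correct and is exactly the route the paper takes: the paper states this result as a direct consequence of Theorem \ref{geninvertible}, and your write-up simply supplies the (accurate) bookkeeping that an admissible expansion of a non-constant space in $\rm{T}$ must use $f_\tau$ infinitely often, so that $\pi_1(g_j)=\tau$ is non-invertible for arbitrarily large $j$, while every bonding map on the $\Sigma$ side induces an invertible homomorphism. Your closing remark that non-constancy of the $\Sigma$-space is not actually needed is also correct.
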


Observe that this is despite the fact that for each non-constant flow space in $\Sigma$, replacing each $\sigma$ in an admissible expansion with a $\tau$ yields a space in $\rm{T}$ for which all abelian invariants coincide. In fact, we shall now see that $\h$ is sufficient to classify the spaces within either family, and so this allows us to classify the spaces within $\Sigma$ and $\rm{T}$ simultaneously.
For this, we will need the following result, which is obtained in \cite[Lemma 4.2]{BW} with the aid of the Farey array.

\begin{lemma}\label{uniquefact}
Any non-negative element $M\in GL_2(\mZ)$ has a {\bf unique} factorization 
$$M\ =\ M_0M_1\cdots M_n$$
where 
$$M_0\in\left\{\left(\begin{array}{cc}1&0\\0&1\\\end{array}\right),\ \left(\begin{array}{cc}0&1\\1&0\\\end{array}\right)\right\}
\qquad\qquad M_i\in\left\{\left(\begin{array}{cc}1&1\\0&1\\\end{array}\right),\ \left(\begin{array}{cc}1&0\\1&1\\\end{array}\right)\right\},\ i\not=0\,.$$
\end{lemma}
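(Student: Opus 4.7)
The plan is to induct on the entry-sum $s(M) := a+b+c+d$ where $M = \left(\begin{array}{cc} a & b \\ c & d \end{array}\right)$, writing $L = \left(\begin{array}{cc} 1 & 0 \\ 1 & 1 \end{array}\right)$ and $R = \left(\begin{array}{cc} 1 & 1 \\ 0 & 1 \end{array}\right)$. The key computational observation is that $ML^{-1} = \left(\begin{array}{cc} a-b & b \\ c-d & d \end{array}\right)$ and $MR^{-1} = \left(\begin{array}{cc} a & b-a \\ c & d-c \end{array}\right)$, so $M$ factors as $M' \cdot L$ (respectively $M' \cdot R$) with $M'$ non-negative precisely when $a \geq b$ and $c \geq d$ (respectively $b \geq a$ and $d \geq c$). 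In either case $\det M' = \det M$ and $s(M') < s(M)$ whenever the extracted factor is non-trivial; non-negativity with $\det = \pm 1$ forces each row and column of $M$ to be non-zero, so the entry-sum decrease ($b+d$ when extracting $L$, $a+c$ when extracting $R$) is strictly positive.

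For the base case $s(M) = 2$, a direct inspection shows the only non-negative elements of $GL_2(\mZ)$ with entry-sum $2$ are $I$ and $S$, providing the allowed values of $M_0$. For the inductive step one must verify that whenever $s(M) \geq 3$ at least one of the two extraction conditions holds. The negation is that one has either $a > b$ and $c < d$, or symmetrically $a < b$ and $c > d$. A short case analysis---splitting on whether the smaller entry in each pair is $0$, and exploiting $|ad - bc| = 1$ together with non-negativity---forces $M = I$ in the first case and $M = S$ in the second, both contradicting $s(M) \geq 3$. Iterating the extraction therefore yields the existence part of the statement by peeling off an $L$ or $R$ from the right until the remaining matrix has entry-sum $2$.

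For uniqueness, the two extraction conditions ``$a \geq b$ and $c \geq d$'' and ``$b \geq a$ and $d \geq c$'' are simultaneously satisfied only when $a = b$ and $c = d$, which forces $\det M = 0$; hence when $M \neq I, S$ the choice of rightmost factor in $\{L, R\}$ is determined by the column inequalities, and the induction hypothesis handles the remainder. One must also check that no $M$ admitting an extraction can lie in $\{I, S\}$, which is immediate since $M' L$ and $M' R$ with $M'$ non-negative in $GL_2(\mZ)$ have entry-sum at least $3$. The main subtlety I anticipate is the bookkeeping of the two ``forbidden'' mixed sign patterns and the verification that they isolate exactly $I$ and $S$; beyond this, the procedure is essentially the Euclidean algorithm applied simultaneously to the two columns of $M$, and reading the record of extractions in reverse yields the asserted unique factorization $M = M_0 M_1 \cdots M_n$.
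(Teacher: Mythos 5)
Your proof is correct. Note first that the paper does not actually prove this lemma: it is quoted from Barge--Williams \cite[Lemma 4.2]{BW}, where it is obtained ``with the aid of the Farey array,'' i.e.\ by locating the column vectors of $M$ in the Stern--Brocot/Farey mediant construction. Your argument is a self-contained, purely matrix-theoretic substitute: induction on the entry sum, peeling off $L=\left(\begin{smallmatrix}1&0\\1&1\end{smallmatrix}\right)$ or $R=\left(\begin{smallmatrix}1&1\\0&1\end{smallmatrix}\right)$ from the right. The key verifications all hold: non-negativity plus $\det=\pm1$ forces every row and column to be non-zero, so each extraction strictly decreases the entry sum; in the two ``mixed'' sign patterns the estimate $ad\geq(b+1)(c+1)$ (respectively $bc\geq(a+1)(d+1)$) combined with $|ad-bc|=1$ forces $b=c=0$, $a=d=1$ (respectively $a=d=0$, $b=c=1$), isolating exactly $I$ and $S$; and the two extraction conditions can only hold simultaneously when $a=b$, $c=d$, which kills the determinant, so the rightmost letter is always uniquely determined. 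This is of course the same underlying mathematics as the Farey array (the Euclidean algorithm run on the columns), but your version has the advantage of being elementary and directly checkable, whereas the paper's route requires importing the Farey machinery from \cite{BW}. The only cosmetic point worth making explicit is that the letters are extracted from the right, so the recorded word must be reversed to produce the factorization $M=M_0M_1\cdots M_n$ in the stated order --- which you do note at the end.
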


\begin{theorem}
Two non-constant flow spaces in $\Sigma$ or $\rm{T}$ are flow equivalent if and only if their admissible expansions are tail equivalent. 
\end{theorem}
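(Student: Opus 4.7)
The backward direction is immediate: tail equivalent admissible expansions yield, after truncation and reindexing, identical inverse sequences in $\overline{\cW}$, so Lemma \ref{seq rep equiv} provides the required flow equivalence.

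For the forward direction I would use the invariant $\h$ together with Lemma \ref{uniquefact}. Let $\mX, \mY$ be flow equivalent non-constant flow spaces in $\Sigma$, with admissible expansions corresponding to sequences $(M_i)_{i\geq 1}$ and $(M'_j)_{j\geq 1}$ in $\{M_\sigma, M_\rho\}$. Direct computation yields the Farey factorizations
$$M_\rho = \begin{pmatrix}1&1\\1&2\end{pmatrix} = LR, \qquad M_\sigma = \begin{pmatrix}5&3\\3&2\end{pmatrix} = RLRL,$$
so each admissible expansion produces a canonical infinite Farey word $W = w(M_1)w(M_2)\cdots\in\{L,R\}^\omega$ by setting $w(M_\rho)=LR$ and $w(M_\sigma)=RLRL$, and similarly $W'$. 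Flow equivalence forces $\h(\mX)=\h(\mY)$. Taking a simplified zigzag realization in $\overline{\cP_\cA}$ (so all triangles commute on the nose, since we are abelian) produces indices $m_k, n_k$ and positive matrices $D_k, U_k$ with $U_k D_{k+1} = M_{m_k}\cdots M_{m_{k+1}-1}$ and $D_k U_k = M'_{n_k}\cdots M'_{n_{k+1}-1}$. Composing gives $D_k\cdot M_{m_k}\cdots M_{m_{k+1}-1} = M'_{n_k}\cdots M'_{n_{k+1}-1}\cdot D_{k+1}$, and iterating from $k=1$ yields
$$D_1 \cdot M_{m_1}\cdots M_{m_{K+1}-1} \;=\; M'_{n_1}\cdots M'_{n_{K+1}-1}\cdot D_{K+1}$$
for every $K\geq 1$. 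Lemma \ref{uniquefact} (with a routine argument to absorb any globally consistent swap-prefix of the $D_k, U_k$) then translates this into a Farey word identity, and taking $K\to\infty$ while comparing letters position-by-position yields the infinite word equality $d_1\cdot \tilde{W} = \tilde{W'}$, where $\tilde{W}$ and $\tilde{W'}$ are the suffixes of $W$ and $W'$ starting at the block boundaries corresponding to indices $m_1$ and $n_1$. Consequently $\tilde{W}$ and $\tilde{W'}$ share a common infinite tail.

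The non-constancy hypothesis is then used to align block structure. Checking the four two-block concatenations shows $M_\rho M_\rho = LRLR$ and $M_\sigma M_\sigma = RLRLRLRL$ contain neither $LL$ nor $RR$, while $M_\sigma M_\rho = RLRLLR$ contains $LL$ and $M_\rho M_\sigma = LRRLRL$ contains $RR$. Hence the patterns $LL$ and $RR$ occur \emph{exclusively} at cross-type block junctions in any admissible Farey expansion, and each such pattern unambiguously pinpoints a block boundary. Non-constancy guarantees infinitely many cross-type transitions in $(M_i)$, so the common tail of $\tilde{W}$ and $\tilde{W'}$ contains infinitely many $LL$ or $RR$ patterns. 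Each such pattern forces a block boundary at the same letter in both $\tilde{W}$ and $\tilde{W'}$; from any single aligned boundary, the parsing rule $L\mapsto LR$, $R\mapsto RLRL$ recursively determines the remaining block structure, so the block sequences of $\tilde{W}$ and $\tilde{W'}$ coincide from that point on. Unwinding the parsing yields $I,J\geq 0$ with $M_{I+k} = M'_{J+k}$ for all $k\geq 1$, i.e.\ tail equivalence of the admissible expansions. The argument for $\rm{T}$ is identical since $M_\tau = M_\sigma$ at the abelianized level.

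The main obstacle is the clean passage from the pro-isomorphism in $\overline{\cP_\cA}$ to the infinite Farey word identity $d_1 \tilde{W} = \tilde{W'}$: this requires juggling the truncation/telescoping freedom of the zigzag, the possible swap-prefixes in the Farey factorization of the $D_k, U_k$ (these must have consistent parity across all $k$, and a global $S$ can then be absorbed without changing the combinatorial conclusion), and the limiting procedure as $K\to\infty$. Once this identity is in hand, the combinatorial step using $LL$ and $RR$ patterns is short. Non-constancy is indispensable: the constant expansions of $\Omega_\rho$ and $\Omega_\sigma$ give Farey sequences $LRLRLR\cdots$ and $RLRLRL\cdots$ that share a common tail after a one-letter shift, yet their blocks cannot be aligned because the common tail contains no $LL$ or $RR$ pattern---consistent with the fact that these two flow-equivalent constant spaces have non-tail-equivalent admissible expansions.
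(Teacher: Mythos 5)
You follow the same route as the paper's own proof: the backward direction via Lemma \ref{seq rep equiv}, and the forward direction by passing to $\h$, invoking the unique Farey factorization of Lemma \ref{uniquefact}, observing that the doubled letters $LL$ and $RR$ occur precisely at cross-type block junctions, and using non-constancy to produce infinitely many synchronizing junctions from which the block parsing is determined. Your write-up of that combinatorial mechanism is correct and considerably more explicit than the paper's one-sentence version of it.

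The genuine problem is the step you yourself single out as the ``main obstacle'' and then wave through: the claim that a global swap-prefix $S=\left(\begin{array}{cc}0&1\\1&0\end{array}\right)$ in the factorizations of the $D_k,U_k$ ``can be absorbed without changing the combinatorial conclusion.'' It cannot. Since $SLS=R$ and $SRS=L$, pushing the swap past a block complements its Farey word, and $\overline{RLRL}=(LR)^2$, i.e.\ the word of $\sigma$ complements to the word of $\rho^2$ and vice versa (at the matrix level $SM_\sigma S=M_\rho^{\,2}$). So in the $\det D_1=-1$ case your word identity aligns the blocks of $\tilde W$ with the \emph{complemented} blocks of $\tilde W'$, and the boundary-synchronization argument then delivers the re-parsing $\rho^{2k}\leftrightarrow\sigma^{k}$ rather than tail equivalence. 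This is not a removable technicality: writing $\theta$ for the letter swap $a\leftrightarrow b$, one checks that $\theta\rho\theta\colon a\mapsto aab,\ b\mapsto ab$ and that its square is exactly $\sigma$, so $\theta\rho^2\theta=\sigma$ as substitutions. Consequently, for instance, the non-constant spaces with admissible expansions $(\rho^2\sigma^2)^\infty$ and $(\sigma\rho^4)^\infty$ are flow equivalent via the (positive, hence flow-direction preserving) swap homeomorphism of $W_2$ applied levelwise, yet these expansions are not tail equivalent. So either the determinant $-1$ case must be excluded by an argument you do not supply --- and the paper's own proof, which only says ``after perhaps some truncation depending on $(d_1)_*$,'' is silent on exactly the same point --- or the conclusion has to be weakened to tail equivalence up to the $\rho^2\leftrightarrow\sigma$ relabeling. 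As written, the absorption claim is where your proof breaks.
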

\begin{proof}
If two flow spaces have expansions are tail equivalent as in Definition \ref{taileq}, they are clearly flow equivalent. 

Let now $\mX$ and $\mY$ be two flow equivalent non-constant flow spaces from one of the families. Consider the commutative diagram representing the isomorphism of $\h(\mX)$ and $\h(\mY)$ resulting from the flow equivalence relative to admissible expansions:
\[
\begin{tikzcd}[font=\large] 
H_1\left(X_{m_1} \right) \arrow[swap]{d}{\left( d_1\right)_* } && H_1\left(X_{m_2}\right)\arrow[swap]{d}{\left( d_2\right)_*}\arrow{ll}{\left( f_{m_1,\,m_2}\right)_*}&& H_1\left(X_{m_3}\right)\arrow{ll}{\left( f_{m_2,\,m_3}\right)_*}\arrow[swap]{d}{\left( d_3\right)_*} &&  \cdots \arrow{ll}  \\ H_1\left(Y_{n_1}\right) && H_1\left(Y_{n_2} \right)\arrow{ll}{\left( g_{n_1,\,n_2}\right)_*}\arrow{llu}{\left( u_1\right)_*} && H_1\left(Y_{n_3} \right)\arrow{ll}\arrow{ll}{\left( g_{n_2,\,n_3}\right)_*}\arrow{llu}{\left( u_2\right)_*}  &&\cdots \arrow{ll}   
\end{tikzcd}
\]
The matrix representing the maps induced by $\sigma$ and $\tau$ is factored as
\[
\left(\begin{array}{cc}1&1\\0&1\\\end{array}\right)\left(\begin{array}{cc}1&0\\1&1\\\end{array}\right)\left(\begin{array}{cc}1&1\\0&1\\\end{array}\right)\left(\begin{array}{cc}1&0\\1&1\\\end{array}\right)\, ,
\]
whereas the matrix representing $\rho$ is factored as $\left(\begin{array}{cc}1&0\\1&1\\\end{array}\right)\left(\begin{array}{cc}1&1\\0&1\\\end{array}\right)$. From the diagram, for each $i$ we have $\left( d_1\right)_*\circ \left( f_{m_1,\,m_i}\right)_* = \left( g_{n_1,\,n_i}\right)_*\circ \left( d_i\right)_*$, and factoring the resulting equal matrices as in  Lemma \ref{uniquefact}, we can identify the occurrence of each $\rho$ in the corresponding expansion by the adjacent identical matrices, after perhaps some truncation depending on  $\left( d_1\right)_*$.  As this equation of matrices holds for all $i$, we can identify the corresponding increasing portions of the expansions. Thus, the expansions for $\mX$ and $\mY$ are tail equivalent, truncating according to the values $m_1$ and $n_1$ and the matrix representing $\left( d_1\right)_*$.
\end{proof}

%%%%%%%%%%
Thus, each of the families has uncountably many flow equivalence classes and the only overlap between the two families is in the class of the constant flow spaces $\Omega_\sigma$ and $\Omega_\rho$, while the constant $\Omega_\tau$ is only flow equivalent to its tail equivalents.

The family $\Sigma$ is a subfamily of the suspensions of Sturmian sequences, the Denjoy continua. In fact our technique can be readily used to classify all the Denjoy continua, which had been previously established in \cite{BW},\cite{F}.  

The family $\Sigma$ illustrates the importance of our requirement that the homomorphisms in the construction of $\p$ and $PT$ are positive, as without this requirement the admissible expansions of \emph{any} two spaces from the Sturmian family admit a commutative zigzag diagram of induced maps in $\pi_1$ since all the homomorphisms are invertible, but this is only by allowing the homomorphisms $d_n$ and $u_n$ not to be positive: the key to this observation is essentially that the inverse of a positive isomorphism is not generally positive. (For example, the inverse to the positive isomorphism $a\mapsto aab,\quad b\mapsto ab$ is $a\mapsto ab^{-1},\quad b\mapsto ba^{-1}b$.) From this it follows that these spaces are all shape equivalent and have isomorphic pro-$\pi_1$. Thus we see our invariants are significantly finer than those defined just from the shape category. In  particular, it shows the difference between the positive trope and the fundamental trope of Fox, which was developed to define shape theory.  

Interestingly, in $\Sigma$ and $\rm{T}$ flow equivalence coincides precisely with pointed flow equivalence relative to the admissible expansions as tail equivalence preserves base points and results in isomorphic $\p$. In contrast, consider the following positive endomorphisms of $F_2$: 

$$\alpha:\ a\mapsto abaababa\quad b\mapsto ababa\qquad\qquad\beta: a\mapsto baabaaba\quad b\mapsto baaba.
$$

With notation as above, we let $V$ be the family of flow spaces admitting an expansion using only $f_\sigma$, $f_\alpha$ and $f_\beta$.

\begin{theorem}\label{varied base point}
 All flow spaces in $V$ are flow equivalent  to $\Omega_\sigma$.
\end{theorem}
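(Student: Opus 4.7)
The plan is to apply Theorem \ref{differentbasepoints} directly. Take $\mX=\Omega_\sigma$ with its constant expansion, so that $X_i=W_2$ and $f_i=f_\sigma$ for every $i$. A general $\mY\in V$ comes equipped with an admissible expansion $\mY\approx\underleftarrow{\lim}(W_2,g_i)$ in which each $g_i\in\{f_\sigma,f_\alpha,f_\beta\}$. To conclude that $\mY$ is flow equivalent to $\Omega_\sigma$, it therefore suffices to check the purely algebraic fact that each of $f_\sigma,f_\alpha,f_\beta$ is positive trope equivalent to $f_\sigma$ as a morphism in $\cW$.

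The first case is trivial. For the other two, I would carry out the direct verification on the two free generators of $\pi_1(W_2)=F_2$. The key observation is that $\sigma(a)=aabaabab$ and $\sigma(b)=aabab$ both start with the letter $a$ and both end with the letter $b$, so they admit common cyclic rotations. Using the convention $\cc_c\sigma(x)=c^{-1}\sigma(x)c$, the choice $c=a$ rotates each image word to the left by one letter, giving
\begin{align*}
\cc_a\sigma(a) &= a^{-1}(aabaabab)a = abaababa = \alpha(a),\\
\cc_a\sigma(b) &= a^{-1}(aabab)a = ababa = \alpha(b),
\end{align*}
so that $\alpha=\cc_a\sigma$ and hence $f_\alpha\s f_\sigma$. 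The choice $c=b^{-1}$ rotates each image word to the right by one letter, giving
\begin{align*}
\cc_{b^{-1}}\sigma(a) &= b(aabaabab)b^{-1} = baabaaba = \beta(a),\\
\cc_{b^{-1}}\sigma(b) &= b(aabab)b^{-1} = baaba = \beta(b),
\end{align*}
so that $\beta=\cc_{b^{-1}}\sigma$ and hence $f_\beta\s f_\sigma$.

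With these two identities in hand, every bonding map $g_i$ in an admissible expansion of $\mY$ satisfies $g_i\s f_\sigma$, and Theorem \ref{differentbasepoints} immediately produces a flow equivalence $\mY\to\Omega_\sigma$. There is no real obstacle beyond this computational verification; its content is precisely that $\alpha$ and $\beta$ are cyclic rotations of $\sigma$, a feature detected by the positive trope but invisible to abelian invariants, and it is precisely this feature that motivates the choice of the three substitutions defining $V$. In contrast to the families $\Sigma$ and $\mathrm{T}$, the various $\p$-invariants at the natural base points determined by distinct admissible expansions of $\mY$ will generically differ, illustrating how the unpointed invariant $PT$ is strictly coarser than $\p$.
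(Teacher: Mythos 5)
Your proof is correct and is essentially the paper's own argument: the paper likewise observes that $\alpha=\cc_a\sigma$ and $\beta=\cc_{b^{-1}}\sigma$ and then invokes Theorem \ref{differentbasepoints}. Your explicit verification of the conjugation identities (which do check out under the paper's convention $\cc_a\sigma(x)=a^{-1}\sigma(x)a$) simply fills in the computation the paper leaves to the reader.
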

\begin{proof}
As $\alpha=\cc_a \sigma $ and $\beta= \cc_{b^{-1}} \sigma$, this follows directly from \ref{differentbasepoints}. 
\end{proof}
The expansions in $V$ can be viewed as alternative expansions of $\Omega_\sigma$ with different choices of base point. To see this, consider the flow equivalence $d$ constructed as in the proof of Theorem \ref{ThmA} from the czz in the proof of Theorem \ref{differentbasepoints}, where we view the top row of the diagram to have all $\sigma$'s as the horizontal maps and the bottom row to have horizontal maps according to the admissible expansion of a chosen element of $V$. The corresponding subshifts associated to the initial tiling space factors are identical to the substitution subshift  $S(\sigma)$ as in Definition \ref{substsubshiftdefn}. Following through the construction, and using the notation of Section \ref{expansionexamp}, we see that the base point for $\Omega_\alpha$ considered as a point in the  the base $S(\sigma)$ is  $\alpha^\infty(a)\,\cdot \alpha^\infty(a)$, whereas the base point for $\Omega_\beta$ is $\beta^\infty(a)\,\cdot\, \beta^\infty(b)$. For the more general element of $V$, the base point will be determined by the limit sequence obtained by successive applications of the corresponding substitutions. 

The phenomenon that leads to the classification of the non-constant flow spaces within $\Sigma$ and $\rm{T}$ can be directly generalised as follows, where we use analogous terminology and notation to the above.

\begin{theorem}
Suppose $\sigma_1,\ldots, \sigma_m$ are primitive, proper positive endomorphisms of $F_n$ which are represented in homology by the square matrices $A_i\in M_n(\mZ^+)$. Suppose the $A_i$ {\em freely\/} generate the semi-group $A=\langle A_1,\ldots,A_n\rangle \subset M_m(\mZ^+)$. With $S$ denoting the collection of all flow spaces admitting expansions using the $f_{\sigma_i}$, flow spaces in $S$ are flow equivalent if and only if their admissible expansions are tail equivalent. 
\end{theorem}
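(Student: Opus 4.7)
The plan is to extend the argument used for the families $\Sigma$ and $\rm{T}$, with the free generation hypothesis playing the role that the Farey-type unique factorisation Lemma \ref{uniquefact} played there. The direction ``tail equivalent $\Rightarrow$ flow equivalent'' is immediate: after truncation and re-indexing, two tail-equivalent admissible expansions produce the same inverse sequence in $\overline{\cW}$, hence flow equivalent limits by Lemma \ref{seq rep equiv}.

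For the converse, suppose $\mX,\mY\in S$ are flow equivalent with admissible expansions given by symbol sequences $\mathbf{j}=j_1 j_2 \cdots$ and $\boldsymbol\ell=\ell_1 \ell_2 \cdots$ in $\{1,\ldots,m\}^{\mZ^+}$. By Corollary \ref{PTmain}, $PT(\mX)=PT(\mY)$ produces a simplified czz as in (\ref{redzz}) between the associated inverse sequences in $\overline{\cP_F}$. Passing to homology $H_1$ kills all conjugations, reducing the czz to a strictly commutative zigzag of abelian groups; by the homological version of Lemma \ref{decompose} we get, for each $i\in\mZ^+$, the matrix identity
\[
(d_1)_* \cdot A_{j_1} A_{j_2} \cdots A_{j_{k(i)}} \;=\; A_{\ell_1} A_{\ell_2} \cdots A_{\ell_{k'(i)}} \cdot (d_i)_*
\]
in $M_n(\mZ^+)$.

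To read off tail equivalence of $\mathbf{j}$ and $\boldsymbol\ell$ from these identities, I plan to compare successive levels. Writing $\mathbf{j}^{(i)}=j_1\cdots j_{k(i)}$ and likewise $\boldsymbol\ell^{(i)}$, substituting the level-$i$ identity into the level-$(i+1)$ identity yields
\[
A_{\boldsymbol\ell^{(i)}} \cdot (d_i)_* \cdot A_{j_{k(i)+1}}\cdots A_{j_{k(i+1)}} \;=\; A_{\boldsymbol\ell^{(i)}} \cdot A_{\ell_{k'(i)+1}}\cdots A_{\ell_{k'(i+1)}} \cdot (d_{i+1})_*.
\]
Left-cancelling $A_{\boldsymbol\ell^{(i)}}$ produces the recursion
\[
(d_i)_* \cdot A_{j_{k(i)+1}}\cdots A_{j_{k(i+1)}} \;=\; A_{\ell_{k'(i)+1}}\cdots A_{\ell_{k'(i+1)}} \cdot (d_{i+1})_*,
\]
which, combined with the freeness of $\langle A_1,\ldots,A_m\rangle$ in $M_n(\mZ^+)$, should inductively identify the blocks of $\boldsymbol\ell$ with the corresponding blocks of $\mathbf{j}$ after an initial offset determined solely by $(d_1)_*$.

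The main obstacle is making the left-cancellation of $A_{\boldsymbol\ell^{(i)}}$ and the subsequent identification of blocks rigorous. Free generation is an assertion \emph{within} the semigroup $\langle A_1,\ldots,A_m\rangle$; it does not immediately guarantee that $A_{\boldsymbol\ell^{(i)}}$ is injective as a $\mZ$-linear map on $\mZ^n$, which is what cancellation in $M_n(\mZ^+)$ requires. The hardest step will therefore be to show that the primitivity and properness of the $A_i$, together with the aperiodicity coming from $\mX,\mY$ being aperiodic minimal, force every word in the $A_i$ to have nonzero determinant, so that free generation in the abstract semigroup propagates to genuine matrix cancellation. A parallel secondary difficulty is handling the boundary matrix $(d_1)_*$, which need not lie in the free semigroup: as in the Sturmian case, the initial alignment of $\boldsymbol\ell$ against $\mathbf{j}$ may be offset by a bounded number of symbols, corresponding to the ``truncation depending on $(d_1)_*$'' phrase used in the $\Sigma$/$\rm{T}$ argument.
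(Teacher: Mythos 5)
Your overall strategy -- pass the czz to homology, kill the conjugations, and use the freeness of $\langle A_1,\ldots,A_m\rangle$ in place of Lemma \ref{uniquefact} to align the two symbol sequences -- is exactly the route the paper intends (it offers no separate proof, only the remark that the $\Sigma$/$\mathrm{T}$ argument ``can be directly generalised''). However, the step you yourself single out as the crux is a genuine gap, and moreover the lemma you propose to prove in order to close it is false. Primitivity and properness of a positive endomorphism of $F_n$ do \emph{not} force its abelianisation to be nonsingular: the paper's own rewritten Thue--Morse substitution $A\mapsto ABCD$, $B\mapsto ABD$, $C\mapsto ACBCD$, $D\mapsto ACBD$ from Section \ref{expansionexamp} is primitive and proper, yet its matrix has three identical rows and hence determinant $0$; such a matrix can still satisfy the freeness hypothesis (its powers are pairwise distinct since it is primitive). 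So you cannot left-cancel $A_{\boldsymbol\ell^{(i)}}$ in $M_n(\mZ^+)$ in general, and the inductive block-matching collapses at its first step. Note also why the issue is invisible in the worked case: for $\Sigma$ and $\mathrm{T}$ every matrix in sight, including $(d_1)_*$ (whose determinant is forced to be $\pm1$ by $D_1U_1=B_1\in GL_2(\mZ)$), is a non-negative element of $GL_2(\mZ)$ and hence is itself uniquely a word in the two elementary matrices by Lemma \ref{uniquefact}; that is what lets the boundary maps be absorbed into the factorisation. In the general theorem $(d_1)_*$ need not lie in the semigroup $\langle A_1,\ldots,A_m\rangle$ at all, so even granting cancellation, freeness of the semigroup does not by itself let you peel $(d_1)_*$ off the identity $(d_1)_*A_{\mathbf{j}^{(i)}}=A_{\boldsymbol\ell^{(i)}}(d_i)_*$ and identify the remaining words letter by letter.

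To repair the argument you should avoid homology-level cancellation altogether and work where the conjugations are still visible but controllable: as in the proof of Theorem \ref{ThmA}, a conjugation by a positive or negative element (Lemma \ref{plusorminus}) acts on the associated symbolic sequences only as a shift, so the relations $u_nd_{n+1}=\cc_{g_n}\sigma_{n,n+1}$ and $d_nu_n=\cc_{h_n}\tau_{n,n+1}$ say that the two hierarchies of $n$-tiles on a common leaf agree up to bounded offsets. Tail equivalence of the admissible expansions should then be extracted from this combinatorial alignment together with freeness, rather than from matrix identities in $M_n(\mZ^+)$.
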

The freeness of the generation is to be interpreted as no two distinct words in the $A_i$ yield equal matrix products.

\section{Concluding remarks}
We now see how we can identify the flow equivalence classes with naturally defined algebraic structures. While the notion of positive trope equivalence might at first seem abstract and unwieldy, the proof of its completeness as an invariant reveals that it is in fact quite approachable, as illustrated in Theorem \ref{differentbasepoints}. In particular, the conjugations involved in a czz diagram do not alter the subshifts associated to the expansions of the corresponding flow spaces. In \cite{U} it is shown that a particular pair of substitution subshifts that differ only in the order of two symbols in their substitutions are not flow equivalent, as in Example \ref{twistedfib}. It is to be expected that by applying appropriate tools of combinatorial group theory our invariants can be used to classify many and various large families of flow spaces. 

It is natural to wonder to what extent the results we have developed here could be generalised to higher dimensions. The notion of germinal equivalence is related to the notion of return equivalence as developed in \cite{F} in one dimension and in \cite{CHL} for matchbox manifolds of higher dimension. As shown in \cite{CHL}, there is a limit even in the equicontinuous case to what can be determined about the global matchbox manifold from the dynamics on a transversal. This, coupled with the large variety of spaces beyond circles that could appear as building blocks of approximating spaces in expansions in higher dimensions, makes the task quite imposing. However, if one restricts attention to those spaces that support an action of $\mR^d$, there is reason to hope that significant generalisations are possible.

\end{document}